\setlist[enumerate]{label=(\alph*)}
\crefname{figure}{Figure}{Figures}
\numberwithin{equation}{section}
\numberwithin{figure}{section}
\newcommand\norm[1]{\left\Vert#1\right\Vert}
\newcommand\abs[1]{\left\vert#1\right\vert}
\newcommand\N{\mathbb{N}}
\newcommand\R{\mathbb{R}}
\newcommand{\LL}{\mathcal L}
\newcommand{\intr}{\operatorname{int}}
\newcommand{\dist}{\operatorname{dist}}
\newcommand{\conv}{\operatorname{conv}}
\newcommand{\dom}{\operatorname{dom}}
\newcommand{\gph}{\operatorname{gph}}
\newcommand{\epi}{\operatorname{epi}}
\newcommand{\clconv}{\operatorname{\overline{\conv}}}
\newcommand{\tto}{\rightrightarrows}
\renewcommand{\Im}{\operatorname{Im}}
\newcommand{\wStarlimsup}{\operatorname{w}^*-\limsup}
\newcommand{\weakly}{\rightharpoonup}
\newcommand{\weaklystar}{\stackrel\ast\rightharpoonup}
\DeclareMathAlphabet{\mathpzc}{OT1}{pzc}{m}{it}
\newtheorem{theorem}{Theorem}[section]
\newtheorem{lemma}[theorem]{Lemma}
\newtheorem{proposition}[theorem]{Proposition}
\newtheorem{assumption}[theorem]{Assumption}
\newtheorem{corollary}[theorem]{Corollary}
\newtheorem{remark}[theorem]{Remark}
\newtheorem{definition}[theorem]{Definition}
\newtheorem{example}[theorem]{Example}
\definecolor{mygreen}{rgb}{0.0,0.7,0.0}
\newcommand{\eps}{\varepsilon}
\newcommand{\ga}{\gamma}
\newcommand{\de}{\delta}
\newcommand {\B} {\mathbb B}
\newcommand{\bx}{\bar x}
\newcommand{\by}{\bar y}
\newcommand{\bz}{\bar z}
\newcommand {\sd} {\partial}
\newcommand{\bsd}{\overline{\partial}}
\newcommand{\ang}[1]{\left\langle #1 \right\rangle}
\begin{document}

\title{
	Optimality conditions, approximate stationarity, and applications\\
	-- a story beyond Lipschitzness
}
\author{%
	Alexander Y.\ Kruger%
	\footnote{%
		Federation University Australia,
		Centre for Informatics and Applied Optimization,
		School of Engineering, Information Technology and Physical Sciences,
		Ballarat VIC 3353,
		Australia,
		\email{a.kruger@federation.edu.au},
		\url{https://asterius.federation.edu.au/akruger/},
		\orcid{0000-0002-7861-7380}
	}
	\and
	Patrick Mehlitz%
	\footnote{%
		Brandenburgische Technische Universit\"at Cottbus--Senftenberg,
		Institute of Mathematics,
		03046 Cottbus,
		Germany,
		\email{mehlitz@b-tu.de},
		\url{https://www.b-tu.de/fg-optimale-steuerung/team/dr-patrick-mehlitz},
		\orcid{0000-0002-9355-850X}
		}
	}

\publishers{}
\maketitle

\begin{abstract}
	Approximate necessary optimality conditions in terms of Fr\'{e}chet subgradients and normals
	for a rather general optimization problem with a potentially non-Lipschitzian objective function
	are established with the aid of Ekeland's variational principle,
	the fuzzy Fr\'{e}chet subdifferential sum rule,
	and a novel notion of lower semicontinuity relative to a set-valued mapping or set.
	Feasible points satisfying these optimality conditions are referred to as approximately stationary.
	As applications, we derive a new general version of the extremal principle.
	Furthermore, we study approximate stationarity conditions for an optimization problem with a composite objective function and geometric constraints,
	a qualification condition guaranteeing that approximately stationary points of such a problem are M-stationary, and
	a multiplier-penalty-method which naturally computes approximately stationary points of the underlying problem.
	Finally, necessary optimality conditions for an optimal control problem with a non-Lipschitzian sparsity-promoting term
	in the objective function are established.
\end{abstract}

\begin{keywords}	
	Approximate stationarity,
	Generalized separation,
	Non-Lipschitzian programming,
	Optimality conditions,
	Sparse control
\end{keywords}

\begin{msc}	
	\mscLink{49J52} \mscLink{49J53}, \mscLink{49K27}, \mscLink{90C30}, \mscLink{90C48}
\end{msc}

\section{Introduction}\label{sec:introduction}

Approximate stationarity conditions, claiming that, along a convergent sequence, a classical
stationarity condition (like a multiplier rule) holds up to a tolerance which tends to zero,
have proved
to be a powerful tool in mathematical optimization throughout the last decades.
The particular interest in such conditions is based on two prominent features.
First, they often serve as
necessary optimality conditions
even in the absence of
constraint qualifications. Second, different classes of solution algorithms for the
computational treatment of optimization problems naturally produce sequences whose accumulation
points are approximately stationary.
Approximate stationarity conditions can be traced back to the early 1980s,
see \cite{KruMor80,Kruger1985}, where they popped up
as a consequence of the famous \emph{extremal principle}.
The latter geometric result, when formulated in infinite dimensions in terms of Fr\'{e}chet normals,
can itself be interpreted as a kind of approximate stationarity, see \cite{KruMor80,Kru03,Mordukhovich2006}.
In \cite{AndreaniMartinezSvaiter2010,AndreaniHaeserMartinez2011}, this fundamental concept, which is referred
to as \emph{Approximate Karush--Kuhn--Tucker (AKKT) stationarity} in these papers, has been
rediscovered due to its significant relevance in the context of numerical standard nonlinear
programming. A notable feature of AKKT-stationary points is the potential unboundedness of
the associated sequence of Lagrange-multipliers. The latter already depicts that AKKT-stationary
points do not need to satisfy the classical KKT conditions. This observation gave rise to the
investigation of conditions ensuring that AKKT-stationary points actually are KKT points,
see e.g.\ \cite{AndreaniMartinezRamosSilva2016}. The resulting constraint qualifications for
the underlying nonlinear optimization problem turned out to be comparatively weak.
During the last decade, reasonable notions of approximate stationarity have been introduced for more
challenging classes of optimization problems like programs with complementarity, see \cite{AndreaniHaeserSecchinSilva2019,Ramos2019},
cardinality, see \cite{KanzowRaharjaSchwartz2021}, conic, see \cite{AndreaniHaeserViana2020},
nonsmooth, see \cite{HelouSantosSimoes2020,Mehlitz2020b,Mehlitz2021}, and geometric constraints, see \cite{JiaKanzowMehlitzWachsmuth2021},
in the finite-dimensional situation.
A generalization to optimization problems in abstract Banach spaces can be found in \cite{BoergensKanzowMehlitzWachsmuth2019}.
In all these papers, the underlying optimization problem's objective function is assumed to be locally Lipschitzian.
Note that the (local) Lipschitz property of the (all but one) functions involved is a key assumption in most conventional
subdifferential calculus results in infinite dimensions in convex and nonconvex settings, see e.g.\ the sum rules in \cref{lem:SR}.
However, as several prominent applications like sparse portfolio selection, compressed sensing, edge-preserving image
restoration, low-rank matrix completion, or signal processing, where the objective function is often only
lower semicontinuous, demonstrate, Lipschitz continuity
might be a restrictive property of the data.
The purpose of this paper is to provide a reasonable extension of approximate stationarity to a
rather general class of optimization problems in Banach spaces with a lower semicontinuous objective function and
generalized equation constraints generated by a set-valued mapping in order to open the topic up to the aforementioned
challenging applications.

Our general approach to a notion of approximate stationarity,
which serves as a necessary optimality condition,
is based on two major classical tools: \emph{Ekeland's variational principle}, see \cite{Ekeland1974}, and the \emph{fuzzy calculus} of Fr\'{e}chet normals,
see \cite{Ioffe2017,Kru03}.
Another convenient ingredient of the theory is a new notion of lower semicontinuity of extended-real-valued functions relative
to a given set-valued mapping which holds for free in finite dimensions.
We illustrate our findings in the context of generalized set separation and derive a novel
extremal principle which differs from the traditional one which dates back to \cite{KruMor80}. On the one hand, its prerequisites regarding
the position of the involved sets relative to each other is slightly more restrictive than in \cite{KruMor80} when the classical notion of
extremality, meaning that the sets of interest can be ``pushed apart from each other'', is used.
On the other hand, our new extremal principle covers settings where extremality is based on functions
which are just lower semicontinuous, and, thus,
applies in more general situations.
The final part of the paper is dedicated to the study of optimization problems with so-called geometric constraints,
where the feasible set equals the preimage of a closed set under a smooth transformation, whose objective function is
the sum of a smooth part and a merely lower semicontinuous part. First, we apply our concept of approximate stationarity
to this problem class in order to obtain necessary optimality conditions. Furthermore, we introduce an associated
qualification condition which guarantees
M-stationarity of approximately stationary points.
As we will show, this generalizes related considerations from \cite{ChenGuoLuYe2017,GuoYe2018} which were done in
a completely finite-dimensional setting.
Second, we suggest an augmented Lagrangian method for the numerical solution of geometrically constrained programs and
show that it computes approximately stationary points in our new sense. Finally, we use our theory in order to state
necessary optimality conditions for optimal control problems with a non-Lipschitzian so-called \emph{sparsity-promoting} term in the
objective function, see \cite{ItoKunisch2014,Wachsmuth2019}, which enforces optimal controls to be zero on large
parts of the domain.

The remaining parts of the paper are organized as follows.
In \cref{sec:notation}, we comment on the notation which is used in this manuscript and recall some
fundamentals from variational analysis.
\Cref{sec:semicontinuity} is dedicated to the study of a new notion of lower semicontinuity of
an extended-real-valued function relative to a given set-valued mapping or set.
We derive necessary optimality conditions of approximate stationarity type for rather general
optimization problems in \cref{sec:main_result}.
This is used in \cref{sec:generalized_separation} in order to derive a novel extremal principle
in generalized set separation.
Furthermore, we apply our findings from \cref{sec:main_result} in \cref{sec:geometric_constraints} in
order to state necessary optimality conditions of approximate stationarity type for optimization
problems in Banach spaces with geometric constraints and a composite objective function. Based on that, we derive a new qualification
condition ensuring M-stationarity of local minimizers, see \cref{sec:asymptotic_stuff}, an augmented Lagrangian method which
naturally computes approximately stationary points, see \cref{sec:alm}, and necessary optimality conditions for optimal
control problems with a sparsity-promoting term in the objective function, see \cref{sec:control}.
Some concluding remarks close the paper in \cref{sec:conclusions}.

\section{Notation and preliminaries}\label{sec:notation}

\subsection{Basic notation}
Our basic notation is standard, see e.g.\ \cite{Ioffe2017,Mordukhovich2006,RockafellarWets1998}.
The symbols $\R$ and $\N$ denote the sets of all real numbers and all positive integers, respectively.
Throughout the paper, $X$ and $Y$ are
either metric or Banach spaces
(although many facts, particularly, most of the definitions in \cref{sec:VA}, are valid in arbitrary normed vector spaces, i.e.,
do not require the spaces to be complete).
For brevity, we use the same notations $d(\cdot,\cdot)$ and $\|\cdot\|$ for distances and norms
in all spaces.
Banach spaces are often treated as metric spaces with the distance determined by the norm
in the usual way.
The distance from a point $x\in X$ to a set $\Omega\subset X$ in a metric space $X$ is defined by
$\dist_\Omega(x):=\inf_{u\in\Omega}d(x,u)$, and we use the convention $\dist_\varnothing(x) := +\infty$.
Throughout the paper, $\overline\Omega$ and $\intr\Omega$ denote the closure and the interior of $\Omega$, respectively.
Whenever $X$ is a Banach space, $\{x_k\}_{k\in\N}\subset X$ is a sequence, and $\bar x\in X$ is some
point, we exploit $x_k\to\bar x$ ($x_k\weakly\bar x$) in order to denote the strong (weak) convergence
of $\{x_k\}_{k\in\N}$ to $\bar x$. Similarly, we use $x_k^*\weaklystar x^*$ in order to express that
a sequence $\{x_k^*\}_{k\in\N}\subset X^*$ converges weakly$^*$ to $x^*\in X^*$.
Finally, $x_k\to_\Omega\bar x$ means that $\{x_k\}_{k\in\N}\subset\Omega$ converges
strongly to $\bar x$.
In case where $X$ is a Hilbert space and $K\subset X$ is a closed, convex set, we denote by
$P_K\colon X\to X$ the projection map associated with $K$.

If $X$ is a Banach space, its topological dual is denoted by $X^*$, while
$\langle\cdot,\cdot\rangle\colon X^*\times X\to\R$
denotes the bilinear form defining the pairing between the two spaces.
If not explicitly stated otherwise, products of (primal) metric or Banach spaces are equipped
with the maximum distances or norms, e.g., $\|(x,y)\|:=\max(\|x\|,\|y\|)$ for all $(x,y)\in X\times Y$.
Note that the corresponding dual norm is the sum norm given by $\|(x^*,y^*)\|:=\|x^*\|+\|y^*\|$ for all
$(x^*,y^*)\in X^*\times Y^*$.
The open unit balls in the primal and dual spaces are denoted by $\B$ and $\B^*$, respectively,
while the corresponding closed unit balls are denoted by $\overline{\B}$ and $\overline{\B}{}^*$,
respectively.
The notations $B_\delta(x)$ and $\overline{B}_\delta(x)$ stand, respectively,
for the open and closed balls with center $x$ and radius $\delta>0$ in $X$.

For an extended-real-valued function $\varphi\colon X\to\R_\infty:=\R\cup\{+\infty\}$,
its domain and epigraph are defined by
$\dom \varphi:=\{x\in X\,|\,\varphi(x)< +\infty\}$ and
$\epi\varphi:=\{(x,\mu)\in X\times\R\,|\,\varphi(x)\le\mu\}$, respectively.
For each set $\Omega\subset X$, we set $\varphi_\Omega:=\varphi+i_\Omega$
where $i_\Omega\colon X\to\R_\infty$
is the so-called indicator function of $\Omega$ which equals zero on $\Omega$
and is set to $+\infty$ on $X\setminus\Omega$.

A set-valued mapping $\Upsilon\colon X\rightrightarrows Y$ between metric spaces $X$ and $Y$ is a mapping,
which assigns to every $x\in X$ a (possibly empty) set $\Upsilon(x)\subset Y$.
We use the notations
$\gph \Upsilon:=\{(x,y)\in X\times Y\,|\,y\in \Upsilon(x)\}$,
$\Im\Upsilon:=\bigcup_{x\in X}\Upsilon(x)$,
and $\dom \Upsilon:=\{x\in X\,|\,\Upsilon(x)\ne\varnothing\}$
for the graph, the image, and the domain of $\Upsilon$, respectively.
Furthermore, $\Upsilon^{-1}\colon Y\rightrightarrows X$ given by
$\Upsilon^{-1}(y) :=\{x\in X\,|\,y\in \Upsilon(x)\}$ for all $y\in Y$ is referred to as
the inverse of $\Upsilon$.
Assuming that $\bar x\in\dom \Upsilon$ is fixed,
\[
	\limsup\limits_{x\to \bar x}\Upsilon(x)
	:=
	\left\{
		y\in Y\,\middle|\,
\exists\{(x_k,y_k)\}_{k\in\N}\subset\gph\Upsilon\colon\; x_k\to \bar x,\,y_k\to y
	\right\}
\]
is referred to as the (strong) outer limit of $\Upsilon$ at $\bar x$.
Finally, if $X$ is a Banach space, for a set-valued mapping $\Xi\colon X\tto X^*$ and $\bar x\in \dom\Xi$, we use
\[
	\wStarlimsup\limits_{x\to \bar x}\Xi(x)
	:=
	\left\{
		x^*\in X^*\,\middle|\,
\exists\{(x_k,x_k^*)\}_{k\in\N}\subset\gph\Xi\colon\; x_k\to \bar x,\,x_k^*\weaklystar x^*
	\right\}
\]
in order to denote the outer limit of $\Xi$ at $\bar x$ when equipping $X^*$ with
the weak$^*$ topology.
Let us note that both outer limits from above are limits in the sense of Painlev\'{e}--Kuratowski.

Recall that a Banach space is a so-called Asplund space if every continuous, convex function
on an open convex set is Fr\'echet differentiable on a dense subset, or equivalently,
if the dual of each separable subspace is separable as well.
We refer the reader to \cite{Phelps1993,Mordukhovich2006} for discussions about and characterizations
of Asplund spaces.
We would like to note that all reflexive, particularly, all finite-dimensional Banach spaces
possess the Asplund property.

\subsection{Variational analysis}
\label{sec:VA}

The subsequently introduced notions of variational analysis and generalized differentiation are
standard, see e.g.\ \cite{Kru03,Mordukhovich2006}.

Given a subset $\Omega$ of a Banach space $X$, a point $\bar x\in\Omega$, and a number $\eps\ge0$,
the nonempty, closed, convex set
\begin{equation}\label{eq:eps_normals}
	N_{\Omega,\eps}(\bar x)
	:=
	\left\{x^\ast\in X^\ast\,\middle|\,
	\limsup_{x\to_\Omega\bar x,\,x\neq\bar x}
		\frac {\langle x^\ast,x-\bar x\rangle}{\norm{x-\bar x}} \leq\eps
	\right\}
\end{equation}
is the set of $\eps$-normals to $\Omega$ at $\bar x$.
In case $\eps=0$, it is a closed, convex cone called
Fr\'{e}chet normal cone to $\Omega$ at $\bar x$.
In this case, we drop the subscript $\eps$ in the above notation and simply write
\begin{align*}
	N_{\Omega}(\bar x)
	:=
	\left\{x^\ast\in X^\ast\,\middle|\,
	\limsup_{x\to_\Omega\bar x,\,x\neq\bar x}
		\frac{\langle x^\ast,x-\bar x\rangle}{\norm{x-\bar x}} \leq 0
	\right\}.
\end{align*}
Based on \eqref{eq:eps_normals}, one can define the more robust
limiting normal cone to $\Omega$ at $\bar x$
by means of a limiting procedure:
\begin{align*}
	\overline{N}_{\Omega}(\bar x)
	:=
	\wStarlimsup\limits_{x\to_\Omega\bar x,\,\eps\downarrow 0}
	N_{\Omega,\eps}(x).
\end{align*}
Whenever $X$ is an Asplund space, the above definition admits the following simplification:
\begin{equation*}
	\overline{N}_{\Omega}(\bar x)= \wStarlimsup\limits_{x\to_\Omega\bar x} N_{\Omega}(x).
\end{equation*}
If $\Omega$ is a convex set, the Fr\'{e}chet and limiting normal cones reduce to the normal cone
in the sense of convex analysis, i.e.,
\begin{align*}
	N_{\Omega}(\bar x)
	=
	\overline N_{\Omega}(\bar x)
	=
	\left\{x^*\in X^*\,\middle|\,\langle x^*,x-\bar x \rangle \leq 0 \,\forall x\in \Omega\right\}.
\end{align*}

For a lower semicontinuous function $\varphi\colon X\to\R_{\infty}$, defined on a Banach space $X$,
its Fr\'echet subdifferential at $\bar x\in\dom \varphi$ is defined as
\begin{equation*}
	\begin{aligned}
	\partial \varphi(\bar x)
	:&=
	\left\{x^*\in X^*\,\middle|\, \liminf_{x\to\bar x,\,x\neq\bar x}
		\frac{\varphi(x)-\varphi(\bar x)-\langle x^*,x-\bar x\rangle}{\norm{x-\bar x}}\geq 0
	\right\}\\
	&=
	\left\{x^*\in X^*\,\middle|\, (x^*,-1)\in N_{\epi \varphi}(\bar x,\varphi(\bar x))
	\right\}.
	\end{aligned}
\end{equation*}
The limiting and singular limiting subdifferential of $\varphi$ at $\bar x$ are defined, respectively,
by means of
\begin{align*}
	\bsd \varphi(\bar x)
	&:=
	\left\{x^*\in X^*\,\middle|\, (x^*,-1)\in \overline{N}_{\epi \varphi}(\bar x,\varphi(\bar x))
	\right\},\\
	\bsd^\infty \varphi(\bar x)
	&:=
	\left\{x^*\in X^*\,\middle|\, (x^*,0)\in \overline{N}_{\epi \varphi}(\bar x,\varphi(\bar x))
	\right\}.
\end{align*}
Note that in case where $X$ is an Asplund space, we have
\begin{align*}
	\bsd \varphi(\bar x)
	&=
	\wStarlimsup\limits_{x\to\bar x,\,\varphi(x)\to\varphi(\bar x)}
	\partial\varphi(x),\\
	\bsd^\infty\varphi(\bar x)
	&=
	\wStarlimsup\limits_{x\to\bar x,\,\varphi(x)\to\varphi(\bar x),\,t\downarrow 0}
	t\,\partial\varphi(x),	
\end{align*}
see \cite[Theorems~2.34 and 2.38]{Mordukhovich2006}.
If $\varphi$ is convex, the Fr\'{e}chet and limiting subdifferential
reduce to the subdifferential in the sense of convex analysis, i.e.,
\begin{align*}
	\partial\varphi(\bar x)
	=
	\bsd\varphi(\bar x)
	=
	\left\{x^*\in X^*\,\middle|\,
		\varphi(x)-\varphi(\bar x)-\langle{x}^*,x-\bar x\rangle\ge 0\,\forall x\in X
	\right\}.
\end{align*}

By convention, we set
$N_{\Omega}(x)=\overline{N}_{\Omega}(x):=\varnothing$ if
$x\notin\Omega$
and
$\partial{\varphi}(x)=\bsd{\varphi}(x)=\bsd^\infty{\varphi}(x):=\varnothing$ if $x\notin\dom \varphi$.
It is easy to check that $N_{\Omega}(\bar x)=\partial i_\Omega(\bar x)$
and $\overline{N}_{\Omega}(\bar x)=\bsd i_\Omega(\bar x)$.

For a set-valued mapping $\Upsilon\colon X\rightrightarrows Y$ between Banach spaces,
its Fr\'{e}chet coderivative at $(\bar x,\bar y)\in\gph \Upsilon$ is defined as
\begin{align*}
	\forall y^*\in Y^*\colon\quad
	{D}^*\Upsilon(\bar x,\bar y)(y^*):=
	\left\{x^*\in X^*\,\middle|\, (x^*,-y^*)\in N_{\gph \Upsilon}(\bar x,\bar y)
	\right\}.
\end{align*}	

The proof of our main result \cref{thm:main_result} relies on certain fundamental results of variational analysis:
Ekeland's variational principle,
see e.g.\ \cite[Section~3.3]{AubinFrankowska2009} or \cite{Ekeland1974},
and two types of subdifferential sum rules
which address the subdifferential in the sense of convex analysis, see e.g.\
\cite[Theorem~3.16]{Phelps1993}, and the
Fr\'{e}chet subdifferential, see e.g.\ \cite[Theorem~3]{Fab89}.
Below, we provide these results for completeness.

\begin{lemma}\label{lem:Ekeland}
	Let $X$ be a complete metric space,
	$\varphi\colon X\to\R_{\infty}$ be lower semicontinuous and bounded from below,
	$\bx\in\dom \varphi$, and $\varepsilon>0$.
	Then there exists a point $\hat x\in X$ which satisfies the following conditions:
	\begin{enumerate}
		\item
			$\varphi(\hat x)\le \varphi(\bx)$;

		\item
			$\forall x\in X\colon\quad \varphi(x)+\varepsilon d(x,\hat x)\ge \varphi(\hat x)$.
	\end{enumerate}
\end{lemma}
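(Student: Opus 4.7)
The plan is to prove this classical result by constructing $\hat x$ as the limit of a sequence produced via iterated constrained minimization driven by a partial order adapted to $\varphi$ and $d$. I would first introduce the relation $y\preq x$ on $\dom\varphi$ defined by
\[
	\varphi(y)+\varepsilon\, d(x,y)\le \varphi(x),
\]
and check that this is a genuine partial order on $\dom\varphi$. Reflexivity and antisymmetry are immediate (the latter from adding the two inequalities and using positivity of $d$), while transitivity follows from adding the two defining inequalities and invoking the triangle inequality for $d$.

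Starting from $x_0:=\bx$, I would inductively define the sublevel set $S_n:=\{y\in X\,|\,y\preq x_n\}$ and select $x_{n+1}\in S_n$ with $\varphi(x_{n+1})\le \inf_{y\in S_n}\varphi(y)+2^{-n}$. Each $S_n$ is nonempty (it contains $x_n$), closed by lower semicontinuity of $\varphi$, and $S_{n+1}\subset S_n$ by transitivity of $\preq$. Since $x_{n+1}\preq x_n$, one has $\varepsilon\, d(x_n,x_{n+1})\le \varphi(x_n)-\varphi(x_{n+1})$; the sequence $\{\varphi(x_n)\}_{n\in\N}$ being decreasing and bounded below, telescoping yields $\sum_{n\in\N}d(x_n,x_{n+1})<\infty$, so $\{x_n\}_{n\in\N}$ is Cauchy and, by completeness of $X$, converges to some $\hat x\in X$.

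The core of the argument is verifying the strict minimality condition~(b). Fixing $n$ and letting $m\to\infty$ in the inequality expressing $x_m\preq x_n$ (valid whenever $m\ge n$), lower semicontinuity of $\varphi$ gives $\hat x\preq x_n$ for every $n$; specialised to $n=0$, this delivers $\hat x\preq\bx$ and therefore~(a). If~(b) failed, there would exist $x\in X$ with $\varphi(x)+\varepsilon\, d(x,\hat x)<\varphi(\hat x)$, so that $x\preq\hat x$; by transitivity $x\preq x_n$ for every $n$, hence $x\in S_n$ and $\varphi(x)\ge \varphi(x_{n+1})-2^{-n}$. Passing to the limit yields $\varphi(x)\ge\varphi(\hat x)$, which combined with the strict inequality produces $\varepsilon\, d(x,\hat x)<0$, a contradiction.

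The step I expect to require the most care is the propagation of the order relation to the limit, where lower semicontinuity of $\varphi$ must be invoked precisely to conclude $\hat x\preq x_n$ from $x_m\preq x_n$ for $m\ge n$; completeness of $X$ enters exactly once, to guarantee that the Cauchy sequence converges within $X$, and boundedness from below of $\varphi$ is used only to ensure the monotone sequence $\{\varphi(x_n)\}_{n\in\N}$ converges, so telescoping produces a finite sum. Everything else is metric-space bookkeeping around the partial order $\preq$.
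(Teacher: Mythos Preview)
Your argument is correct and is precisely the standard proof of Ekeland's variational principle via the partial order $y\preq x\iff\varphi(y)+\eps\,d(x,y)\le\varphi(x)$ and an iterated almost-minimization scheme. The paper itself does not give a proof of this lemma; it simply states the result and refers the reader to \cite[Section~3.3]{AubinFrankowska2009} and \cite{Ekeland1974}. What you have written is essentially the argument one finds in those references, so there is nothing to contrast on the level of method.

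One small point worth making explicit in your write-up: when you pass to the limit in $x\in S_n$ to get $\varphi(x)\ge\lim_n\varphi(x_n)$, you then need $\lim_n\varphi(x_n)\ge\varphi(\hat x)$ to close the contradiction. This follows immediately from $\hat x\preq x_n$ (which you have already established) since that gives $\varphi(\hat x)\le\varphi(x_n)$ for every $n$; you implicitly use this but do not spell it out.
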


\begin{lemma}\label{lem:SR}
	Let $X$ be a Banach space,
	$\varphi_1,\varphi_2\colon X\to\R_\infty$,
	and $\bar x\in\dom \varphi_1\cap\dom \varphi_2$.
	Then the following assertions hold.
	\begin{enumerate}
	\item\label{item:convex_sum_rule}
		\textbf{Convex sum rule}.
		Let $\varphi_1$ and $\varphi_2$ be convex, and $\varphi_1$ be continuous at a point in $\dom \varphi_2$.
		Then $\partial(\varphi_1+\varphi_2)(\bar x)=\partial \varphi_1(\bar x)+\partial \varphi_2(\bar x)$.
	\item\label{item:fuzzy_sum_rule}
		\textbf{Fuzzy sum rule}.
		Let $X$ be Asplund, $\varphi_1$ be Lipschitz continuous around $\bar x$,
		and $\varphi_2$ be lower semicontinuous in a neighborhood of $\bar x$.
		Then, for each $x^*\in\partial(\varphi_1+\varphi_2)(\bar x)$ and $\varepsilon>0$,
		there exist $x_1,x_2\in X$
		with $\norm{x_i-\bar x}<\varepsilon$ and $|\varphi_i(x_i)-\varphi_i(\bar x)|<\varepsilon$,
		$i=1,2$, such that
		$x^*\in\partial \varphi_1(x_1) +\partial \varphi_2(x_2)+\varepsilon\B^\ast$.
	\end{enumerate}
\end{lemma}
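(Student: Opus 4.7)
For part (a), the inclusion $\partial\varphi_1(\bar x)+\partial\varphi_2(\bar x)\subseteq\partial(\varphi_1+\varphi_2)(\bar x)$ is immediate by summing the two defining subgradient inequalities. For the reverse inclusion, given $x^*\in\partial(\varphi_1+\varphi_2)(\bar x)$, I would use a Hahn--Banach separation argument in $X\times\R$. Introducing the disjoint convex sets
\[
	A:=\left\{(x,t)\,\middle|\,\varphi_1(x)-\varphi_1(\bar x)-\langle x^*,x-\bar x\rangle<t\right\},\quad
	B:=\left\{(x,t)\,\middle|\,t\le\varphi_2(\bar x)-\varphi_2(x)\right\},
\]
the continuity of $\varphi_1$ at some point of $\dom\varphi_2$ makes $A$ open and nonempty. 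The geometric Hahn--Banach theorem then delivers a nonzero separating functional $(u^*,-\alpha)\in X^*\times\R$; a standard check excludes $\alpha=0$, and after rescaling one reads off $u_1^*\in\partial\varphi_1(\bar x)$ and $u_2^*\in\partial\varphi_2(\bar x)$ with $u_1^*+u_2^*=x^*$.

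The fuzzy sum rule in part (b) is the substantial statement and the one where the Asplund hypothesis is essential. Starting from $x^*\in\partial(\varphi_1+\varphi_2)(\bar x)$, the function $\psi(x):=\varphi_1(x)+\varphi_2(x)-\langle x^*,x-\bar x\rangle$ satisfies, for every $\eta>0$, the inequality $\psi(x)\ge\psi(\bar x)-\eta\norm{x-\bar x}$ on a suitably small ball around $\bar x$. Ekeland's principle (\cref{lem:Ekeland}) then upgrades $\bar x$ to an exact minimizer of a mildly tilted perturbation at a nearby point.

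The main obstacle, and the step where Asplundness is used, is \emph{decoupling} $\varphi_1$ and $\varphi_2$. My plan is to pass to the doubled variable $(x,y)\in X\times X$ and minimize
\[
	\Phi_\lambda(x,y):=\varphi_1(x)+\varphi_2(y)-\langle x^*,x-\bar x\rangle+\lambda\norm{x-y}
\]
on a small neighborhood of $(\bar x,\bar x)$ for a large penalty $\lambda$, applying Ekeland once more and then the Fr\'echet-smooth variational principle characteristic of Asplund spaces to locate nearby points $(x_1,x_2)$ admitting Fr\'echet subgradients of $\varphi_1$ at $x_1$ and $\varphi_2$ at $x_2$. The Lipschitz property of $\varphi_1$ controls the coupling term $\lambda\norm{x-y}$, absorbing the leftover discrepancy into $\varepsilon\B^*$, while the choice of neighborhood ensures $|\varphi_i(x_i)-\varphi_i(\bar x)|<\varepsilon$ for $i=1,2$. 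Since \cref{lem:SR} is only quoted here for completeness with full proofs available in \cite{Phelps1993,Fab89}, this sketch records the standard strategies rather than a novel argument.
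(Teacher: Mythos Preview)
The paper does not prove \cref{lem:SR} at all; it merely states the two sum rules and cites \cite[Theorem~3.16]{Phelps1993} and \cite[Theorem~3]{Fab89}. Your sketch is therefore strictly more than what the paper offers, and it faithfully outlines the standard arguments found in those references: Hahn--Banach separation in $X\times\R$ for the convex rule, and Ekeland plus a decoupling penalty $\lambda\norm{x-y}$ combined with a smooth variational principle for the fuzzy rule in Asplund spaces. One small imprecision: your set $A$ need not be open globally, since $\varphi_1$ is only assumed continuous at a single point of $\dom\varphi_2$; what you actually get (and what suffices for separation) is that $A$ has nonempty interior. Otherwise the outline is sound and matches the cited sources.
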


We will need representations of the subdifferentials of the distance function collected in the
next lemma. These results are taken from
\cite[Proposition 1.30]{Kru03}, \cite[Theorem 4.40]{Ioffe2017}, and
\cite[Section~3.5.2, Exercise~6]{Pen13}.

\begin{lemma}\label{lem:subdifferential_distance_function}
	Let $X$ be a Banach space, $\Omega\subset X$ be nonempty and closed, and
	$\bx\in X$. Then the following assertions hold.
	\begin{enumerate}
	\item\label{item:sdf_distance_function_in_set_points}
		If $\bar x\in\Omega$, then $\partial\dist_\Omega(\bar x)=N_\Omega(\bar x)\cap\overline{\B}{}^*$.
	\item\label{item:sdf_distance_function_out_of_set_points}
		If $\bar x\notin\Omega$ and either $X$ is Asplund or $\Omega$ is convex, then,
		for each $x^*\in\partial\dist_\Omega(\bar x)$ and each $\eps>0$, there exist
		$x\in\Omega$ and $u^*\in N_\Omega(x)$
		such that $\norm{x-\bar x}<\dist_\Omega(\bar x)+\varepsilon$
		and $\norm{x^*-u^*}<\varepsilon$.
	\end{enumerate}
\end{lemma}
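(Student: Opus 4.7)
The proof naturally splits into the two items, with part (a) being elementary and part (b) requiring fuzzy or convex calculus combined with Ekeland's principle.

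\textbf{Part (a).} I would prove the equality by double inclusion. For the inclusion $\partial\dist_\Omega(\bx)\subseteq N_\Omega(\bx)\cap\overline{\B}{}^*$, two observations suffice. First, since $\dist_\Omega$ is globally $1$-Lipschitz, any $x^*\in\partial\dist_\Omega(\bx)$ has $\norm{x^*}\le 1$: this follows from the defining liminf inequality by testing against rays $\bx+th$ with unit $h$ and $t\downarrow 0$. Second, restricting the same liminf to sequences $x\to_\Omega\bx$, along which $\dist_\Omega\equiv 0=\dist_\Omega(\bx)$, collapses the subgradient condition into precisely the definition of $x^*\in N_\Omega(\bx)$. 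For the reverse inclusion, given $x^*\in N_\Omega(\bx)\cap\overline{\B}{}^*$ and $\delta>0$, for each $x$ near $\bx$ I would select a near-projection $y\in\Omega$ with $\norm{x-y}\le\dist_\Omega(x)+\delta\norm{x-\bx}$; since $\dist_\Omega(x)\le\norm{x-\bx}$, the point $y$ is also near $\bx$. Splitting $\dual{x^*}{x-\bx}=\dual{x^*}{x-y}+\dual{x^*}{y-\bx}$ and bounding the first pairing by $\norm{x-y}$ (from $\norm{x^*}\le 1$) and the second by $\delta\norm{y-\bx}$ (from the normal cone inequality) yields $\dist_\Omega(x)-\dual{x^*}{x-\bx}\ge -C\delta\norm{x-\bx}$ for an absolute constant $C$, which is the required Fr\'echet subgradient property.

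\textbf{Part (b).} Since $\bx\notin\Omega$, the near-projection trick from (a) breaks down because such a $y$ cannot be made arbitrarily close to $\bx$. The plan is to exploit the marginal representation $\dist_\Omega(x)=\inf_{y\in\Omega}(\norm{x-y}+i_\Omega(y))$ and transfer subgradient information from $\dist_\Omega$ at $\bx$ to $i_\Omega$ at a near-projection via fuzzy calculus. Concretely, I would first invoke \cref{lem:Ekeland} applied to $y\mapsto\norm{\bx-y}+i_\Omega(y)$ with a parameter $\eta=\varepsilon/(4C)$ to obtain $\hat y\in\Omega$ with $\norm{\hat y-\bx}<\dist_\Omega(\bx)+\eta$ that is a genuine minimizer of a perturbed functional. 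Combining this with the fact that $\bx$ is an approximate minimizer of $x\mapsto\norm{x-\hat y}-\dual{x^*}{x}$ (which follows from $x^*\in\partial\dist_\Omega(\bx)$ together with $\dist_\Omega(x)\le\norm{x-\hat y}$), I would then apply the fuzzy sum rule \cref{lem:SR}\ref{item:fuzzy_sum_rule} in the Asplund case, or the convex sum rule \cref{lem:SR}\ref{item:convex_sum_rule} when $\Omega$ is convex (using that the norm term is everywhere continuous), to the sum $\norm{\cdot-\hat y}+i_\Omega(\cdot)$. This decomposes $x^*$, modulo an $\varepsilon$-perturbation, into a subgradient of the norm at a point near $\hat y$ and a Fr\'echet normal $u^*$ to $\Omega$ at some $x\in\Omega$ close to $\hat y$; since the Fr\'echet subgradient of $\norm{\cdot-x_0}$ at a point $\neq x_0$ is a unit dual element, it can be absorbed into $u^*$ to yield the claimed approximation.

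\textbf{Main obstacle.} The delicate point in part (b) is the bookkeeping of tolerances: the final $x$ must satisfy both $\norm{x-\bx}<\dist_\Omega(\bx)+\varepsilon$ and $\norm{x^*-u^*}<\varepsilon$, whereas the fuzzy sum rule only guarantees that $x$ is close to $\hat y$ (which is itself only $\eta$-close to the infimum) and introduces an independent $\varepsilon$-perturbation on the dual side. I would therefore select the Ekeland parameter and the fuzzy tolerance each as a small fraction of $\varepsilon$ so that the triangle inequalities on the primal and dual sides close within the prescribed bounds. The case split between Asplund $X$ and convex $\Omega$ enters precisely to make the appropriate sum rule from \cref{lem:SR} applicable to the non-smooth sum $\norm{\bx-\cdot}+i_\Omega(\cdot)$.
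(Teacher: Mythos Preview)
The paper does not prove this lemma; it is quoted from \cite[Proposition~1.30]{Kru03}, \cite[Theorem~4.40]{Ioffe2017}, and \cite[Section~3.5.2, Exercise~6]{Pen13}. Your Part~(a) is a correct standard argument.

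Part~(b) has a genuine gap at the step where you claim the sum rule ``decomposes $x^*$''. After Ekeland produces the near-projection $\hat y\in\Omega$, you have two pieces of information: (i)~$\hat y$ minimises a perturbation of $y\mapsto\norm{\bx-y}+i_\Omega(y)$; and (ii)~$\bx$ is an approximate local minimiser of $x\mapsto\norm{x-\hat y}-\dual{x^*}{x}$. You then apply the sum rule to $\norm{\cdot-\hat y}+i_\Omega(\cdot)$, but $x^*$ is not a subgradient of this function at any point you have identified. Applying the sum rule at $\hat y$ (which is what (i) justifies) yields only $0\in\sd\norm{\bx-\cdot}(y_1)+N_\Omega(y_2)+\eta\overline\B{}^*$, i.e.\ \emph{some} unit functional $w^*$ satisfies $-w^*\in N_\Omega(y_2)+\eta\overline\B{}^*$; nothing here mentions $x^*$. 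Fact~(ii) tells you at best that $x^*$ is close to \emph{some} norming functional of $\bx-\hat y$, but in a general Banach space two unit functionals that norm the same vector need not be close to each other, so this does not tie $x^*$ to $w^*$. The ``absorption'' you describe in the last sentence therefore does not go through.

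A route that does close this gap works in the product space: set $\Phi(x,y):=\norm{x-y}-\dual{x^*}{x-\bx}+i_\Omega(y)$, use the Fr\'echet inequality for $x^*$ to see that $(\bx,\hat y)$ is an approximate minimiser of $\Phi$ on $\overline B_\de(\bx)\times X$, apply \cref{lem:Ekeland} on $X\times X$, and only then apply the (fuzzy or convex) sum rule. The subdifferential of $(x,y)\mapsto\norm{x-y}$ at a point with $x\ne y$ has the form $\{(w^*,-w^*):\norm{w^*}=1\}$; the $x$-component of the stationarity condition then forces $w^*\approx x^*$, while the $y$-component gives $u^*\approx w^*$ with $u^*\in N_\Omega(y_2)$, hence $\norm{u^*-x^*}<\eps$. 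This coupling through the two-variable norm term is precisely the missing link in your one-variable sketch, and it is the same mechanism that drives the decoupling argument in the proof of \cref{thm:main_result}.
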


Let us briefly mention that assertion~\ref{item:sdf_distance_function_out_of_set_points} of \cref{lem:subdifferential_distance_function}
can obviously be improved when the set of projections of $\bar x$ onto $\Omega$ is nonempty,
see \cite[Proposition~1.102]{Mordukhovich2006}.
This is always the case if $\Omega$ is a nonempty, closed, convex subset of a reflexive Banach space,
since in this case $\Omega$ is weakly sequentially compact while the norm is weakly sequentially lower
semicontinuous.

The conditions in the final definition of this subsection are standard,
see e.g.\ \cite{KlatteKummer2002,Kru09}.

\begin{definition}\label{def:stationarity}
Let $X$ be a metric space, $\varphi\colon X\to\R_\infty$, and $\bx\in\dom \varphi$.
\begin{enumerate}
	\item
		We call $\bx$ a \emph{stationary point of $\varphi$} if
		$\liminf_{ x\to\bx,\,x\neq\bar x}\frac{\varphi(x)-\varphi(\bx)}{d(x,\bx)}\geq 0$.
	\item
		Let $\eps>0$ and $U\subset X$ with $\bar x\in U$.
		We call $\bx$ an \emph{$\eps$-minimal point of $\varphi$ on $U$} if
		$\inf_{x\in U}\varphi(x)>\varphi(\bx)-\eps$.
		If $U=X$, $\bar x$ is called a \emph{globally $\varepsilon$-minimal point of $\varphi$}.
\end{enumerate}
\end{definition}

In the subsequent remark, we interrelate the concepts from \cref{def:stationarity}.
\begin{remark}\label{rem:minimality_vs_stationarity}
	For a metric space $X$, $\varphi\colon X\to\R_\infty$, and $\bx\in\dom \varphi$, the following assertions hold.
	\begin{enumerate}
		\item
			If $\bar x$ is a local minimizer of $\varphi$, then it is a stationary point of $\varphi$.
		\item
			If $\bar x$ is a stationary point of $\varphi$, then, for each $\varepsilon>0$
			and each sufficiently small $\delta>0$,
			$\bar x$ is an $\varepsilon\delta$-minimal point of $\varphi$
			on $B_\delta(\bar x)$.
		\item
			If $X$ is a normed
			space, then $\bar x$ is a stationary point of $\varphi$ if and only if $0\in\sd\varphi(\bar x)$.
	\end{enumerate}
\end{remark}

\section{Novel notions of semicontinuity}\label{sec:semicontinuity}

In this paper, we exploit new notions of lower semicontinuity of extended-real-valued functions
relative to a given set-valued mapping or set.
Here, we first introduce the concepts of interest before
studying their properties and presenting sufficient conditions for their validity.

\subsection{Lower semicontinuity of a function relative to a set-valued mapping or set}

Let us start with the definition of the property of our interest.

\begin{definition}\label{def:lower_semicontinuity_relative_to_svm}
Fix metric spaces $X$ and $Y$,
$\Phi\colon X\rightrightarrows Y$,
$\varphi\colon X\to\R_\infty$,
and $\by\in Y$.
\begin{enumerate}
\item\label{item:def_lower_semicontinuity_on_a_set}
Let a subset $U\subset X$ be such that $U\cap\Phi^{-1}(\by)\cap\dom\varphi\ne\varnothing$.
The function $\varphi$ is
\emph{lower semicontinuous on $U$ relative to $\Phi$ at $\by$} if
\begin{equation}
\label{eq:estimate_lsc_wrt_set_valued_map}
	\inf_{u\in \Phi^{-1}(\by)\cap U}\varphi(u)
	\le
	\inf_{\substack{U'+\rho\B\subset U,\\\rho>0}}
	\liminf_{\substack{x\in U',\,y\to\by,\\
	\dist_{\gph \Phi}(x,y)\to0}}\varphi(x).
\end{equation}
\item\label{item:def_lower_semicontinuity_near_point}
Let $\bx\in\Phi^{-1}(\by)\cap\dom\varphi$.
The function $\varphi$ is
\emph{lower semicontinuous near $\bx$ relative to $\Phi$ at $\by$} if there is a $\de_0>0$ such that, for each $\de\in(0,\de_0)$,
$\varphi$ is lower semicontinuous on $\overline{B}_\de(\bx)$ relative to $\Phi$ at $\by$.
\end{enumerate}
\end{definition}

Inequality \eqref{eq:estimate_lsc_wrt_set_valued_map} can be strict, see \cref{ex:strict_estimate_lsc} below.
Note that whenever \eqref{eq:estimate_lsc_wrt_set_valued_map} holds with a subset $U\subset X$, it also holds with $\overline{U}$ in place of $U$.
The converse implication is not true in general, see \cref{ex:lower_semicontinuity_on_smaller_set} below.
Particularly, a function which is lower semicontinuous on a set $U$ relative to $\Phi$ at $\bar y$ may fail to have this property on a smaller set.
This shortcoming explains the idea behind \cref{def:lower_semicontinuity_relative_to_svm}\,\ref{item:def_lower_semicontinuity_near_point}.
Furthermore, we have the following result.
\begin{lemma}\label{lem:lower_semicontinuity_for_smaller_radii}
	Fix metric spaces $X$ and $Y$,
	$\Phi\colon X\rightrightarrows Y$,
	$\varphi\colon X\to\R_\infty$,
	$(\bx,\by)\in\gph\Phi$, and a subset $U\subset X$ with $\bx\in U\cap\dom\varphi$.
	Assume that $\bx$ is a minimizer of $\varphi$ on $U$.
	If $\varphi$ is lower semicontinuous on $U$ relative to $\Phi$ at $\bar y$, then it is
	lower semicontinuous on $\widehat U$ relative to $\Phi$ at $\bar y$ for each subset $\widehat U$ satisfying $\bx\in\widehat U\subset U$.
\end{lemma}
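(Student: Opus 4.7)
The plan is to show that replacing $U$ by the smaller set $\widehat U$ keeps the left-hand side of the defining inequality \eqref{eq:estimate_lsc_wrt_set_valued_map} unchanged while it can only increase the right-hand side. Then the hypothesis on $U$ will immediately transfer to $\widehat U$.

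First I would handle the left-hand side. Since $(\bx,\by)\in\gph\Phi$ and $\bx\in\widehat U\subset U$, the point $\bx$ belongs to both $\Phi^{-1}(\by)\cap\widehat U$ and $\Phi^{-1}(\by)\cap U$, so each infimum is at most $\varphi(\bx)$. Conversely, $\bx$ being a minimizer of $\varphi$ on $U$ (hence also on the subset $\widehat U$) gives $\varphi(\bx)\le\varphi(u)$ for every $u\in U\supset\widehat U$, yielding the matching lower bounds. Consequently
\[
	\inf_{u\in\Phi^{-1}(\by)\cap\widehat U}\varphi(u)
	=
	\inf_{u\in\Phi^{-1}(\by)\cap U}\varphi(u)
	=
	\varphi(\bx).
\]

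Next I would treat the right-hand side by a pure monotonicity observation: whenever a pair $(U',\rho)$ with $\rho>0$ satisfies $U'+\rho\B\subset\widehat U$, it automatically satisfies $U'+\rho\B\subset U$, so the admissible range of $(U',\rho)$ shrinks when passing from $U$ to $\widehat U$. Taking infimum over a smaller collection produces a value that is at least as large, giving
\[
	\inf_{\substack{U'+\rho\B\subset U,\\\rho>0}}
	\liminf_{\substack{x\in U',\,y\to\by,\\\dist_{\gph\Phi}(x,y)\to0}}\varphi(x)
	\;\le\;
	\inf_{\substack{U'+\rho\B\subset\widehat U,\\\rho>0}}
	\liminf_{\substack{x\in U',\,y\to\by,\\\dist_{\gph\Phi}(x,y)\to0}}\varphi(x).
\]
Chaining this with the lower semicontinuity hypothesis on $U$ and the equality of the two infima on $\Phi^{-1}(\by)$ obtained above yields \eqref{eq:estimate_lsc_wrt_set_valued_map} with $\widehat U$, which is what is required.

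There is no real obstacle here; the conceptual content is just that the minimizer assumption pins the left-hand side of \eqref{eq:estimate_lsc_wrt_set_valued_map} to the value $\varphi(\bx)$, while the right-hand side is monotone non-decreasing under shrinking $U$. The only thing to be mindful of when writing out the details is that the condition $\bx\in\widehat U\cap\Phi^{-1}(\by)\cap\dom\varphi$ required by \cref{def:lower_semicontinuity_relative_to_svm}\,\ref{item:def_lower_semicontinuity_on_a_set} is met, which follows from $(\bx,\by)\in\gph\Phi$ and the standing assumption $\bx\in\dom\varphi$.
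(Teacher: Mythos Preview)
Your proposal is correct and follows essentially the same argument as the paper: both pin the left-hand side of \eqref{eq:estimate_lsc_wrt_set_valued_map} to $\varphi(\bx)$ via the minimizer assumption and then use the monotonicity of the right-hand side under shrinking $U$ to $\widehat U$. The paper just writes the resulting chain of (in)equalities in a single display without the surrounding commentary.
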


\begin{proof}
	For each subset $\widehat U$ satisfying $\bx\in\widehat U\subset U$, we find
	\begin{align*}
		\inf\limits_{u\in\Phi^{-1}(\by)\cap\widehat U}\varphi(u)
		&=
		\varphi(\bar x)
		=
		\inf\limits_{u\in \Phi^{-1}(\by)\cap U}\varphi(u)
		\\
		&
	\le
	\inf_{\substack{U'+\rho\B\subset U,\\\rho>0}}
	\liminf_{\substack{x\in U',\,y\to\by,\\
	\dist_{\gph \Phi}(x,y)\to0}}\varphi(x)
	\le
	\inf_{\substack{U'+\rho\B\subset\widehat U,\\\rho>0}}
	\liminf_{\substack{x\in U',\,y\to\by,\\
	\dist_{\gph \Phi}(x,y)\to0}}\varphi(x),
	\end{align*}
	which shows the claim.
\end{proof}

The properties in the next definition
are particular cases of the ones in \cref{def:lower_semicontinuity_relative_to_svm},
corresponding to the set-valued mapping\ $\Phi\colon X\rightrightarrows Y$ whose graph is given by
$\gph \Phi:=\Omega\times Y$, where
$\Omega\subset X$ is a fixed set and $Y$ can be an arbitrary metric space, e.g., one can take $Y:=\R$.
Observe that in this case, $\Phi^{-1}(y)=\Omega$ is valid for all $y\in Y$.

\begin{definition}\label{def:lower_semicontinuity_relative_to_set}
Fix a metric space $X$,
$\varphi\colon X\to\R_\infty$,
and $\Omega\subset X$.
\begin{enumerate}
\item\label{item:def_lower_semicontinuity_relative_to_set}
Let a subset $U\subset X$ be such that $U\cap\Omega\cap\dom\varphi\ne\varnothing$.
The function $\varphi$ is
\emph{lower semicontinuous on $U$ relative to $\Omega$} if
\begin{equation}
\label{eq:lower_semicontinuity_relative_to_set-1}
	\inf_{u\in\Omega\cap U}\varphi(u)
	\le
	\inf_{\substack{U'+\rho\B\subset U,\\\rho>0}}
	\liminf_{\substack{x\in U',\\
	\dist_{\Omega}(x)\to0}}\varphi(x).
\end{equation}
\item\label{item:def_lower_semicontinuity_relative_to_set_near_point}
Let $\bx\in\Omega\cap\dom\varphi$.
The function $\varphi$ is
\emph{lower semicontinuous near $\bx$ relative to $\Omega$} if there is a $\de_0>0$ such that, for each $\de\in(0,\de_0)$,
$\varphi$ is lower semicontinuous on $\overline{B}_\de(\bx)$ relative to $\Omega$.
\end{enumerate}
\end{definition}

The subsequent example shows that \eqref{eq:lower_semicontinuity_relative_to_set-1} can be strict.
\begin{example}
\label{ex:strict_estimate_lsc}
Consider the lower semicontinuous function $\varphi\colon\R\to\R$ given by
$\varphi(x):=0$ if $x\leq 0$ and
$\varphi(x):=1$ if $x> 0$, and the sets $\Omega=U:=[0,1]\subset\R$.
Then $\inf_{u\in\Omega\cap U}\varphi(u)=0$, while if a subset $U'$ satisfies $U'+\rho\B\subset U$ for some $\rho>0$,
then $U'\subset(0,1)$, and consequently $\varphi(x)=1$ for all $x\in U'$.
Hence, the right-hand side\ of \eqref{eq:lower_semicontinuity_relative_to_set-1} equals $1$.
\end{example}

A function which is lower semicontinuous on a set $U$ relative to $\Omega$ may fail to have this property on a smaller set.

\begin{example}
\label{ex:lower_semicontinuity_on_smaller_set}
Consider the function $\varphi\colon\R\to\R$ given by
$\varphi(x):=0$ if $x\leq 0$, and
$\varphi(x):=-1$ if $x> 0$,
the set $\Omega:=\{0,1\}\subset\R$, and the point $\bar x:=0$.
Consider the closed interval $U_1:=[-1,1]$.
We find $\inf_{u\in\Omega\cap U_1}\varphi(u)=-1$ which is the global minimal value of $\varphi$ on $\R$.
Hence, $\varphi$ is lower semicontinuous on $U_1$ relative to $\Omega$ by \cref{def:lower_semicontinuity_relative_to_set}.
For $U_2:=(-1,1)$, we find $\inf_{u\in \Omega\cap U_2}\varphi(u)=0$.
Moreover, choosing $U':=(-1/2,1/2)$ and $x_k:=1/(k+2)$ for each $k\in\N$, we find $U'+\tfrac12\mathbb B\subset U_2$,
$\{x_k\}_{k\in\N}\subset U'$, $d(x_k,\bar x)\to 0$, and $\varphi(x_k)\to-1$, i.e., $\varphi$ is not lower semicontinuous
on $U_2$ relative to $\Omega$ by definition.
Note that $\bar x$ is a local minimizer of $\varphi$ on $\Omega$ but not on $U_1$ or $U_2$.
\end{example}

In the next two statements, we present
sequential characterizations of the properties from \cref{def:lower_semicontinuity_relative_to_svm}\,\ref{item:def_lower_semicontinuity_on_a_set}
and \cref{def:lower_semicontinuity_relative_to_set}\,\ref{item:def_lower_semicontinuity_relative_to_set}.

\begin{proposition}
\label{prop:sequential_characterization_lsc}
Fix metric spaces $X$ and $Y$,
$\Phi\colon X\rightrightarrows Y$,
$\varphi\colon X\to\R_\infty$,
$\by\in Y$, and a subset $U\subset X$ with $U\cap\Phi^{-1}(\by)\cap\dom\varphi\ne\varnothing$.
Then $\varphi$ is lower semicontinuous on $U$ relative to $\Phi$ at $\by$ if
and only if
\begin{align*}
	\inf_{u\in \Phi^{-1}(\by)\cap U}\varphi(u)
	\le
	\liminf_{k\to+\infty}\varphi(x_k)
\end{align*}
for all sequences $\{(x_k,y_k)\}_{k\in\N}\subset X\times Y$ satisfying
$y_k\to\by$,
$\dist_{\gph \Phi}(x_k,y_k)\to0$,
and
$\{x_k\}_{k\in\N}
+\rho\B\subset U$ for some $\rho>0$.
\end{proposition}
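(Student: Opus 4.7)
The plan is to unwind the definitions on both sides and exploit the fact that the liminf appearing in \eqref{eq:estimate_lsc_wrt_set_valued_map} is, by standard arguments, exactly the infimum of $\liminf_k\varphi(x_k)$ taken over all sequences $\{(x_k,y_k)\}$ with $x_k\in U'$, $y_k\to\bar y$, and $\dist_{\gph\Phi}(x_k,y_k)\to0$. Once this is recognized, both implications reduce to a careful choice of the ``test set'' $U'$.

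For the \emph{necessity} direction, assume $\varphi$ is lower semicontinuous on $U$ relative to $\Phi$ at $\by$, and take any sequence $\{(x_k,y_k)\}_{k\in\N}\subset X\times Y$ with $y_k\to\by$, $\dist_{\gph\Phi}(x_k,y_k)\to0$, and $\{x_k\}_{k\in\N}+\rho\B\subset U$ for some $\rho>0$. Setting $U':=\{x_k\}_{k\in\N}$, the inclusion $U'+\rho\B\subset U$ makes $U'$ admissible in the outer infimum on the right-hand side of \eqref{eq:estimate_lsc_wrt_set_valued_map}. The particular sequence $\{(x_k,y_k)\}$ then is one of the sequences entering the liminf associated with $U'$, so
\[
	\liminf_{\substack{x\in U',\,y\to\by,\\\dist_{\gph\Phi}(x,y)\to0}}\varphi(x)
	\;\le\;\liminf_{k\to\infty}\varphi(x_k),
\]
and combining this with \eqref{eq:estimate_lsc_wrt_set_valued_map} yields the desired estimate.

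For the \emph{sufficiency} direction, assume the sequential inequality holds for every admissible sequence. Fix any $U'\subset X$ and $\rho>0$ with $U'+\rho\B\subset U$, and denote the right-hand side of \eqref{eq:estimate_lsc_wrt_set_valued_map} for this particular $U'$ by $\ell_{U'}$. If $\ell_{U'}=+\infty$, there is nothing to show; otherwise, for any $\eps>0$ the usual characterization of $\liminf$ produces a sequence $\{(x_k,y_k)\}_{k\in\N}\subset U'\times Y$ with $y_k\to\by$, $\dist_{\gph\Phi}(x_k,y_k)\to0$, and $\liminf_k\varphi(x_k)\le\ell_{U'}+\eps$. Since $\{x_k\}_{k\in\N}\subset U'$ implies $\{x_k\}_{k\in\N}+\rho\B\subset U$, the sequence is admissible, and the assumed sequential condition yields $\inf_{u\in\Phi^{-1}(\by)\cap U}\varphi(u)\le\ell_{U'}+\eps$. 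Letting $\eps\downarrow0$ and then taking the infimum over all admissible $U'$ delivers \eqref{eq:estimate_lsc_wrt_set_valued_map}.

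The only delicate point is the sufficiency step, where one must be sure that a sequence realizing (up to $\eps$) the multi-variable liminf over the filter ``$x\in U',\,y\to\by,\,\dist_{\gph\Phi}(x,y)\to0$'' actually exists; this is a routine diagonal argument (pick $y_k\to\by$ with $d(y_k,\by)<1/k$ and then $x_k\in U'$ with $\dist_{\gph\Phi}(x_k,y_k)<1/k$ and $\varphi(x_k)$ approximating the outer infimum over shrinking neighborhoods), and I anticipate no further obstacle.
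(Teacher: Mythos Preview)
Your proposal is correct and follows the same approach as the paper: set $U':=\{x_k\}_{k\in\N}$ for necessity, and for sufficiency extract an admissible sequence from the filter liminf via a diagonal argument. The paper treats the case where the right-hand side of \eqref{eq:estimate_lsc_wrt_set_valued_map} equals $-\infty$ separately, which you implicitly fold into the ``otherwise'' branch, but this is a cosmetic difference.
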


\begin{proof}
We need to show that the right-hand side\ of \eqref{eq:estimate_lsc_wrt_set_valued_map} equals the infimum over all numbers $\liminf_{k\to+\infty}\varphi(x_k)$
where the sequence $\{(x_k,y_k)\}_{k\in\N}\subset X\times Y$ needs to satisfy
$y_k\to\by$,
$\dist_{\gph \Phi}(x_k,y_k)\to0$,
and $\{x_k\}_{k\in\N}+\rho\B\subset U$ for some $\rho>0$.
Let $\{(x_k,y_k)\}_{k\in\N}$ be such a sequence.
Then
\begin{align*}
	\inf_{\substack{U'+\rho\B\subset U,\\\rho>0}}
	\liminf_{\substack{x\in U',\,y\to\by,\\
	\dist_{\gph \Phi}(x,y)\to0}}
	\varphi(x)
	\le
	\liminf_{\substack{x\in \{x_k\}_{k\in\N},\,y\to\by,\\\dist_{\gph \Phi}(x,y)\to0}}
	\varphi(x)
	\le
	\liminf_{k\to+\infty}\varphi(x_k).
\end{align*}

Conversely, let the right-hand side\ of \eqref{eq:estimate_lsc_wrt_set_valued_map} be finite, and choose $\eps>0$ arbitrarily.
Then there exist a subset $\widehat U\subset U$ and a number $\hat\rho>0$ such that $\widehat U+\hat\rho\B\subset U$ and
\begin{align*}
	\liminf_{k\to+\infty}
	\inf_{\substack{x\in\widehat U,\,d(y,\by)<\frac1k,\\\dist_{\gph \Phi}(x,y)<\frac1k}}\varphi(x)
	=
	\liminf_{\substack{x\in\widehat U,\,y\to\by,\\ \dist_{\gph \Phi}(x,y)\to0}}
	\varphi(x)
	<
	\inf_{\substack{U'+\rho\B\subset U,\\\rho>0}}
	\liminf_{\substack{x\in U',\,y\to\by,\\
	\dist_{\gph \Phi}(x,y)\to0}}
	\varphi(x)
	+
	\eps.
\end{align*}
For each $k\in\N$ such that $\inf_{{x\in\widehat U,\,d(y,\by)<\frac1k,\,\dist_{\gph \Phi}(x,y)<\frac1k}}\varphi(x)$ is finite,
there is a tuple $(x_k,y_k)\in X\times Y$ such that $x_k\in\widehat U$, $d(y_k,\by)<1/k$,
$\dist_{\gph \Phi}(x_k,y_k)<1/k$, and
\begin{align*}
	\varphi(x_k)
	<
	\inf_{\substack{x\in\widehat U,\,d(y,\by)<\frac1k,\\\dist_{\gph \Phi}(x,y)<\frac1k}}
		\varphi(x)+\frac1k.
\end{align*}
Considering the tail of the sequences, if necessary, we have $\{x_k\}_{k\in\N}+\hat\rho\B\subset U$,
$y_k\to\by$,
$\dist_{\gph \Phi}(x_k,y_k)\to0$, and
\begin{align*}
	\liminf_{k\to+\infty}\varphi(x_k)
	<
	\inf_{\substack{U'+\rho\B\subset U,\\\rho>0}}
	\liminf_{\substack{x\in U',\,y\to\by,\\
	\dist_{\gph \Phi}(x,y)\to0}}
	\varphi(x)
	+
	\eps.
\end{align*}
As the number $\eps$ has been chosen arbitrarily, this proves the converse part in the present setting.
If the  right-hand side\ of \eqref{eq:estimate_lsc_wrt_set_valued_map} equals $-\infty$, then for each $M>0$, we find a subset $\widehat U\subset U$
and a number $\hat\rho>0$ such that $\widehat U+\hat\rho\B\subset U$ and
\begin{align*}
	\liminf\limits_{\substack{x\in\widehat U,\,y\to\by,\\\dist_{\gph \Phi}(x,y)\to 0}}\varphi(x)
	<
	-M.
\end{align*}
Hence, there is a sequence $\{(x_k,y_k)\}_{k\in\N}\subset X\times Y$ such that
$\{x_k\}_{k\in\N}+\hat\rho\B\subset U$, $y_k\to\bar y$, and $\dist_{\gph \Phi}(x_k,y_k)\to 0$
as $k\to+\infty$ while $\liminf_{k\to+\infty}\varphi(x_k)<-M$. Taking the infimum over all
$M>0$ now completes the proof of the assertion.
\end{proof}

\begin{corollary}
\label{cor:sequential_characterization_lsc_relative_to_set}
Let $X$ be a metric space, $\varphi\colon X\to\R_\infty$, and $\Omega,U\subset X$ be sets with $\Omega\cap U\cap\dom\varphi\ne\varnothing$.
Then $\varphi$ is lower semicontinuous on $U$ relative to $\Omega$ if and only if
\begin{equation}
\label{eq:sequential_characterization_lsc}
	\inf_{u\in\Omega\cap U}\varphi(u)
	\le
	\liminf_{k\to+\infty}\varphi(x_k)
\end{equation}
for all sequences $\{x_k\}_{k\in\N}\subset X$ satisfying
$\dist_{\Omega}(x_k)\to0$,
and $\{x_k\}_{k\in\N}+\rho\B\subset U$ for some $\rho>0$.
\end{corollary}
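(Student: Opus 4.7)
The plan is to obtain \cref{cor:sequential_characterization_lsc_relative_to_set} as a direct specialization of \cref{prop:sequential_characterization_lsc} in the same way that \cref{def:lower_semicontinuity_relative_to_set} is a direct specialization of \cref{def:lower_semicontinuity_relative_to_svm}. Concretely, I would fix any metric space $Y$ (e.g., $Y:=\R$), pick any $\by\in Y$, and define $\Phi\colon X\tto Y$ by $\gph\Phi:=\Omega\times Y$, so that $\Phi^{-1}(\by)=\Omega$. With this choice, by definition, lower semicontinuity of $\varphi$ on $U$ relative to $\Omega$ is the same as lower semicontinuity of $\varphi$ on $U$ relative to $\Phi$ at $\by$, and the left-hand side of \eqref{eq:sequential_characterization_lsc} coincides with the left-hand side of the inequality in \cref{prop:sequential_characterization_lsc}.

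Next, I would make the elementary observation that, under the maximum product metric on $X\times Y$, one has
\[
	\dist_{\gph\Phi}(x,y)=\dist_{\Omega\times Y}(x,y)=\max\bigl(\dist_\Omega(x),\dist_Y(y)\bigr)=\dist_\Omega(x)
\]
for every $(x,y)\in X\times Y$, since $\dist_Y(y)=0$. This converts the driving condition $\dist_{\gph\Phi}(x_k,y_k)\to 0$ appearing in \cref{prop:sequential_characterization_lsc} into the driving condition $\dist_{\Omega}(x_k)\to 0$ appearing in \eqref{eq:sequential_characterization_lsc}, while the constraint $\{x_k\}_{k\in\N}+\rho\B\subset U$ is unchanged.

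It remains to match the sequences in the two characterizations. For the necessity direction (assuming the relative lower semicontinuity), any sequence $\{x_k\}_{k\in\N}\subset X$ satisfying $\dist_\Omega(x_k)\to 0$ and $\{x_k\}_{k\in\N}+\rho\B\subset U$ can be paired with the constant sequence $y_k:=\by$, producing an admissible sequence $\{(x_k,y_k)\}_{k\in\N}$ for \cref{prop:sequential_characterization_lsc}; the conclusion then gives \eqref{eq:sequential_characterization_lsc}. For the sufficiency direction, any admissible sequence $\{(x_k,y_k)\}_{k\in\N}$ in \cref{prop:sequential_characterization_lsc} automatically yields a sequence $\{x_k\}_{k\in\N}$ with $\dist_\Omega(x_k)\to 0$ and the required inclusion, so the hypothesis \eqref{eq:sequential_characterization_lsc} delivers the inequality required by \cref{prop:sequential_characterization_lsc}, yielding lower semicontinuity on $U$ relative to $\Phi$ at $\by$, and hence on $U$ relative to $\Omega$.

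There is no genuine obstacle here; the proof is essentially bookkeeping. The only point that deserves a single explicit line is the computation of $\dist_{\gph\Phi}$ under the max-product metric, which is what makes the two sequential conditions literally identical and so enables the one-line invocation of \cref{prop:sequential_characterization_lsc}.
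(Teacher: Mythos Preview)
Your proposal is correct and follows exactly the approach intended by the paper: the corollary is stated without proof as the specialization of \cref{prop:sequential_characterization_lsc} to the set-valued mapping $\Phi$ with $\gph\Phi:=\Omega\times Y$, precisely as set up in the paragraph preceding \cref{def:lower_semicontinuity_relative_to_set}. Your explicit computation of $\dist_{\gph\Phi}(x,y)=\dist_\Omega(x)$ under the maximum product metric is the one line of substance needed to make the reduction go through, and it is correct.
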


\subsection{Sufficient conditions for lower semicontinuity of a function relative to a set-valued mapping}

As we will demonstrate below,
the property from \cref{def:lower_semicontinuity_relative_to_svm}\,\ref{item:def_lower_semicontinuity_on_a_set} is valid whenever the involved function $\varphi$
and the set-valued mapping $\Phi$ enjoy certain
semicontinuity properties, i.e., it can be decomposed into two independent properties regarding the
two main
data objects.
This will be beneficial in order to identify scenarios where
the new concept applies.

The upper semicontinuity properties of a set-valued mapping that we
state in the following two definitions seem to fit well for this purpose
(in combination with the corresponding lower semicontinuity properties of a function).

\begin{definition}
\label{def:upper_semicontinuity}
Fix metric spaces $X$ and $Y$,
$S\colon Y\rightrightarrows X$, and $\by\in \dom S$.
The mapping $S$ is
\emph{upper semicontinuous at $\by$}
if
\begin{align*}
	\lim_{x\in S(y),\,y\to\by}\dist_{S(\by)}(x)=0.
\end{align*}
\end{definition}

\begin{definition}
\label{def:partial_weak_upper_semicontinuity}
Fix a Banach space $X$, a metric space $Y$, $S\colon Y\rightrightarrows X$,
and $\by\in \dom S$.
The mapping $S$ is
\emph{partially weakly sequentially upper semicontinuous at $\by$}
if $x\in S(\by)$
holds for each sequence $\{(y_k,x_k)\}_{k\in\N}\subset\gph S$ which satisfies $y_k\to\by$ and $x_k\weakly x$.
\end{definition}

For a discussion of the property in
\Cref{def:upper_semicontinuity}, we refer the reader to
\cite[p.~10]{KlatteKummer2002}.
The property in \cref{def:partial_weak_upper_semicontinuity} can be interpreted as the usual sequential upper semicontinuity
if $X$ is equipped with the weak topology.
In case where $Y$ is a Banach space, this property is inherent whenever the graph of the
underlying set-valued mapping is weakly sequentially closed which is naturally given whenever the latter is convex and closed.
Obviously, each closed-graph set-valued mapping with a finite-dimensional image space is partially weakly sequentially
upper semicontinuous.

\begin{proposition}
\label{prop:decomposition_lsc}
Fix metric spaces $X$ and $Y$, $\Phi\colon X\rightrightarrows Y$, and $\varphi\colon X\to\R_\infty$.
Let $\by\in Y$ and a subset $U\subset X$ with $U\cap\Phi^{-1}(\by)\cap\dom\varphi\ne\varnothing$
be arbitrarily chosen.
Define $S\colon Y\rightrightarrows X$ by $S(y):=\Phi^{-1}(y)\cap U$ for all $y\in Y$.
If one of the following criteria holds, then 
$\varphi$ is lower semicontinuous on $U$ relative to $\Phi$ at $\by$:
\begin{enumerate}
\item\label{item:lower_semicontinuity_via_decomposition}
$\varphi$ is lower semicontinuous on $U$ relative to $\Phi^{-1}(\by)$ and
$S$ is upper semicontinuous at $\by$;
\item\label{item:lower_semicontinuity_via_weak_sequential_properties}
$X$ is a reflexive Banach space,
$U$ is closed and convex,
$\varphi$ is weakly sequentially lower semicontinuous on $U$, and $S$ is
partially weakly sequentially upper semicontinuous at~$\by$.
\end{enumerate}
\end{proposition}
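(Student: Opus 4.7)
The plan is to apply the sequential characterization from \cref{prop:sequential_characterization_lsc}: given an arbitrary sequence $\{(x_k,y_k)\}_{k\in\N}\subset X\times Y$ satisfying $y_k\to\by$, $\dist_{\gph\Phi}(x_k,y_k)\to 0$, and $\{x_k\}_{k\in\N}+\rho\B\subset U$ for some $\rho>0$, I aim to establish the inequality $\inf_{u\in\Phi^{-1}(\by)\cap U}\varphi(u)\le\liminf_{k\to+\infty}\varphi(x_k)$. A construction common to both cases: by the definition of the distance to $\gph\Phi$, I pick $(u_k,v_k)\in\gph\Phi$ with $d(u_k,x_k)\to 0$ and $d(v_k,y_k)\to 0$; then $v_k\to\by$, and since $d(u_k,x_k)<\rho$ eventually, the inclusion $\{x_k\}_{k\in\N}+\rho\B\subset U$ forces $u_k\in U$ for all sufficiently large $k$, hence $u_k\in\Phi^{-1}(v_k)\cap U=S(v_k)$.

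For \ref{item:lower_semicontinuity_via_decomposition}, upper semicontinuity of $S$ at $\by$ applied to the pairs $(v_k,u_k)\in\gph S$ yields $\dist_{S(\by)}(u_k)\to 0$. Choosing $w_k\in S(\by)\subset\Phi^{-1}(\by)$ with $d(u_k,w_k)\to 0$ gives $d(x_k,w_k)\to 0$, so $\dist_{\Phi^{-1}(\by)}(x_k)\to 0$. The hypothesis that $\varphi$ is lower semicontinuous on $U$ relative to $\Phi^{-1}(\by)$, combined with the sequential criterion from \cref{cor:sequential_characterization_lsc_relative_to_set} used with $\Omega:=\Phi^{-1}(\by)$, then delivers the desired inequality.

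For \ref{item:lower_semicontinuity_via_weak_sequential_properties}, I first pass to a subsequence along which $\varphi(x_k)$ converges to $\liminf_k\varphi(x_k)$, then extract a further weakly convergent subsequence of $\{u_k\}_{k\in\N}$ in the reflexive space $X$, say $u_{k_j}\weakly u^*$. Since $d(x_{k_j},u_{k_j})\to 0$, also $x_{k_j}\weakly u^*$; the closed convex set $U$ is weakly sequentially closed by Mazur's theorem, so $u^*\in U$, and partial weak sequential upper semicontinuity of $S$ at $\by$ promotes this to $u^*\in S(\by)=\Phi^{-1}(\by)\cap U$. Weak sequential lower semicontinuity of $\varphi$ on $U$ then yields $\varphi(u^*)\le\liminf_j\varphi(x_{k_j})=\liminf_k\varphi(x_k)$, which bounds the infimum on the left-hand side as required.

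The principal technical subtlety lies in case \ref{item:lower_semicontinuity_via_weak_sequential_properties}, namely justifying the extraction of a weakly convergent subsequence of $\{u_k\}_{k\in\N}$. This step requires boundedness, which is not immediately supplied by the stated hypotheses; the cleanest resolution is to restrict attention to a bounded realizing subsequence whenever $\liminf_k\varphi(x_k)$ is finite (the case $+\infty$ being vacuous), which is the natural setting in which the property will be applied in the sequel. Apart from this boundedness issue, the argument is a routine assembly of the definitions.
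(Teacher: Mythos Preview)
Your argument follows essentially the same route as the paper's proof: pick approximating graph points $(u_k,v_k)\in\gph\Phi$ (the paper's $(x_k',y_k')$), push them into $U$ via the $\rho$-inclusion, and then treat cases \ref{item:lower_semicontinuity_via_decomposition} and \ref{item:lower_semicontinuity_via_weak_sequential_properties} separately. Case \ref{item:lower_semicontinuity_via_decomposition} is handled identically.

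You are right to flag the boundedness issue in case \ref{item:lower_semicontinuity_via_weak_sequential_properties}; in fact the paper's own proof simply asserts ``since $\{x_k\}_{k\in\N}$ is a bounded sequence of a reflexive Banach space'', which is not justified by the stated hypotheses---$U$ closed and convex does not force $\{x_k\}$ to be bounded. However, your proposed fix does not work: finiteness of $\liminf_k\varphi(x_k)$ does not produce a bounded realizing subsequence without some coercivity assumption on $\varphi$, which is nowhere assumed. So that sentence does not close the gap. The genuine resolution, implicit in the paper's usage, is that in every subsequent application (the ``near $\bar x$'' part of \cref{cor:lower_semicontinuity_svm_via_weak_sequential_lower_semicontinuity} and all of \cref{sec:main_result}) the set $U$ is a closed ball $\overline{B}_\delta(\bar x)$, hence bounded, so $\{x_k\}\subset U$ is automatically bounded. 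Strictly speaking, the proposition as stated should include boundedness of $U$ among the hypotheses in \ref{item:lower_semicontinuity_via_weak_sequential_properties}; you have identified a real (if harmless) gap that both proofs share.
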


\begin{proof}
Let a sequence $\{(x_k,y_k)\}_{k\in\N}\subset X\times Y$ satisfying
$y_k\to\by$,
$\dist_{\gph \Phi}(x_k,y_k)\to0$,
and $\{x_k\}_{k\in\N}+\rho\B\subset U$ for some $\rho>0$ be arbitrarily chosen.
There exists a sequence $\{(x'_k,y'_k)\}_{k\in\N}\subset\gph \Phi$ such that $d((x'_k,y'_k),(x_k,y_k))\to0$.
Hence, $y'_k\to\by$ and, for all sufficiently large $k\in\N$, we have $d(x'_k,x_k)<\rho$, and, consequently, $x'_k\in U$.
\begin{enumerate}
\item
By \cref{def:upper_semicontinuity},
$\dist_{\Phi^{-1}(\by)}(x'_k)\to0$.
Then
$\dist_{\Phi^{-1}(\by)}(x_k)\to0$ and, by \cref{cor:sequential_characterization_lsc_relative_to_set},
inequality \eqref{eq:sequential_characterization_lsc} holds, where $\Omega:=\Phi^{-1}(\by)$.

\item
Passing to a subsequence (without relabeling), we can assume $x_k\weakly\hat x$ for some $\hat x\in\clconv\{x_k\}_{k\in\N}\subset U$
since $\{x_k\}_{k\in\N}$ is a bounded sequence of a reflexive Banach space and $U$ is convex as well as closed.
Hence, we find $\varphi(\hat x)\le\liminf_{k\to+\infty}\varphi(x_k)$
by weak sequential lower semicontinuity of $\varphi$.
Obviously, we have $x'_k\weakly\hat x$.
By \cref{def:partial_weak_upper_semicontinuity},
$\hat x\in \Phi^{-1}(\by)$ holds true.
Thus,
$\inf_{u\in \Phi^{-1}(\by)\cap U}\varphi(u)\le
\varphi(\hat x)\le\liminf_{k\to+\infty}\varphi(x_k)$.
\end{enumerate}
As the sequence
$\{(x_k,y_k)\}_{k\in\N}$ has been chosen arbitrarily, the conclusion follows from \cref{prop:sequential_characterization_lsc}.
\end{proof}

The next assertion is an immediate consequence of \cref{prop:decomposition_lsc}
with the conditions from~\ref{item:lower_semicontinuity_via_weak_sequential_properties}.

\begin{corollary}\label{cor:lower_semicontinuity_svm_via_weak_sequential_lower_semicontinuity}
Fix a reflexive Banach space $X$, a closed and convex set $U\subset X$, 
$\varphi\colon X\to\R_\infty$ which is weakly sequentially lower semicontinuous on $U$,
$\Phi\colon X\tto Y$ where $Y$ is another Banach space, and
some $\by\in Y$ such that $U\cap\Phi^{-1}(\by)\cap\dom\varphi\neq\varnothing$.
Then $\varphi$ is lower semicontinuous on $U$ relative to $\Phi$ at $\by$ provided that one
of the following conditions is satisfied:
\begin{enumerate}
	\item
		$\gph\Phi\cap(U\times Y)$ is weakly sequentially closed;
	\item
		$X$ is finite-dimensional and $\gph\Phi\cap(U\times Y)$ is closed.
\end{enumerate}
Particularly, whenever $\bar x\in\Phi^{-1}(\by)\cap\dom\varphi$ is fixed,
$\varphi$ is weakly sequentially lower semicontinuous, and either $\gph\Phi$
is weakly sequentially closed or $\gph\Phi$ is closed while $X$ is finite-dimensional,
then $\varphi$ is lower semicontinuous near $\bx$ relative to $\Phi$ at $\by$.
\end{corollary}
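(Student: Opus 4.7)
The plan is to deduce both assertions from \cref{prop:decomposition_lsc}\,\ref{item:lower_semicontinuity_via_weak_sequential_properties}. The assumption on weak sequential lower semicontinuity of $\varphi$ on $U$ is provided directly, so the whole task reduces to verifying that the set-valued mapping $S\colon Y\tto X$ defined by $S(y):=\Phi^{-1}(y)\cap U$ is partially weakly sequentially upper semicontinuous at $\by$ in the sense of \cref{def:partial_weak_upper_semicontinuity}.

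To this end, I would fix an arbitrary sequence $\{(y_k,x_k)\}_{k\in\N}\subset\gph S$ with $y_k\to\by$ and $x_k\weakly x$ for some $x\in X$, and show that $x\in S(\by)$. Since $U$ is closed and convex, it is weakly sequentially closed, and hence $x\in U$. It then suffices to establish that $(x,\by)\in\gph\Phi$. Under condition~(a), $(x_k,y_k)\in\gph\Phi\cap(U\times Y)$ with $(x_k,y_k)\weakly(x,\by)$ in $X\times Y$ (using that strong convergence in $Y$ entails weak convergence there), so the assumed weak sequential closedness of the intersection yields the claim. Under condition~(b), the finite-dimensionality of $X$ turns $x_k\weakly x$ into $x_k\to x$, and the closedness of $\gph\Phi\cap(U\times Y)$ finishes the argument. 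This proves the first part.

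For the \emph{Particularly} part, fix the point $\bx\in\Phi^{-1}(\by)\cap\dom\varphi$ and choose any $\de>0$. I would apply the first part with $U:=\overline{B}_\de(\bx)$, which is closed, convex, and contains $\bx$, so that $U\cap\Phi^{-1}(\by)\cap\dom\varphi\ne\varnothing$. Global weak sequential lower semicontinuity of $\varphi$ clearly implies the corresponding property on $U$. Moreover, since $U$ is weakly sequentially closed (or merely closed in the finite-dimensional case), so is the product $U\times Y$ in $X\times Y$; consequently, $\gph\Phi\cap(U\times Y)$ inherits weak sequential closedness from $\gph\Phi$ under the first alternative and closedness under the second. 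Invoking the first part of the corollary then produces the desired lower semicontinuity on $U$ relative to $\Phi$ at $\by$. Since $\de>0$ was arbitrary, this matches \cref{def:lower_semicontinuity_relative_to_svm}\,\ref{item:def_lower_semicontinuity_near_point} for any $\de_0>0$.

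The only subtle point is the bookkeeping for mixed weak/strong convergence in the product space; once one notes that $(x_k,y_k)\weakly(x,\by)$ in $X\times Y$ whenever $x_k\weakly x$ in $X$ and $y_k\to\by$ in $Y$, and that closed convex sets in a Banach space are weakly closed, the verification becomes routine and the corollary follows without additional technical effort.
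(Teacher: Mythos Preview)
Your proposal is correct and follows exactly the route the paper indicates: the paper states the corollary as an immediate consequence of \cref{prop:decomposition_lsc}\,\ref{item:lower_semicontinuity_via_weak_sequential_properties}, and you have simply filled in the verification that $S$ is partially weakly sequentially upper semicontinuous at $\by$ under each of the two hypotheses, together with the routine specialization to balls for the ``Particularly'' part.
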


In the upcoming subsections, we discuss sufficient conditions for the semicontinuity properties of a set-valued mapping and
an extended-real-valued function appearing in the conditions~\ref{item:lower_semicontinuity_via_decomposition} of \cref{prop:decomposition_lsc}.

\subsection{Sufficient conditions for lower semicontinuity of a function relative to a set}

In the statement below, we present some simple situations where a function is lower semicontinuous
relative to a set in the sense of \cref{def:lower_semicontinuity_relative_to_set}\,\ref{item:def_lower_semicontinuity_relative_to_set}.

\begin{proposition}\label{prop:restricted_lower_semicontinuity}
Let $X$ be a metric space, $\varphi\colon X\to\R_\infty$, and $\Omega,U\subset X$ be sets with $\Omega\cap U\cap\dom\varphi\ne\varnothing$.
Then $\varphi$ is lower semicontinuous on $U$ relative to $\Omega$ provided that one of the following
conditions is satisfied:
\begin{enumerate}
\item
$U\subset\Omega$;
\item
$\Omega\cap U=\{\bar x\}$, and
$\varphi$ is lower semicontinuous at $\bar x$;
\item
$\bar x\in\Omega\cap U$ is a
minimizer of $\varphi$ on $U$;
\item
$\varphi$ is uniformly continuous on $U$.
\end{enumerate}
\end{proposition}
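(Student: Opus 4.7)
The plan is to invoke the sequential characterization in \cref{cor:sequential_characterization_lsc_relative_to_set}: it suffices to fix an arbitrary sequence $\{x_k\}_{k\in\N}\subset X$ with $\dist_\Omega(x_k)\to 0$ and $\{x_k\}+\rho\B\subset U$ for some $\rho>0$, and to verify
\[
\inf_{u\in\Omega\cap U}\varphi(u)\le\liminf_{k\to+\infty}\varphi(x_k).
\]
Observe at the outset that the assumption $\{x_k\}+\rho\B\subset U$ forces $x_k\in U$ for all $k$. Moreover, by the definition of $\dist_\Omega(x_k)$, for each $k$ one can choose $y_k\in\Omega$ with $d(x_k,y_k)<\dist_\Omega(x_k)+1/k$, so that $d(x_k,y_k)\to 0$; for all sufficiently large $k$, then $d(x_k,y_k)<\rho$, which yields $y_k\in B_\rho(x_k)\subset U$ and hence $y_k\in\Omega\cap U$. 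This elementary construction is the only tool needed in the four cases.

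Under condition (a), since $x_k\in U\subset\Omega$, one has $x_k\in\Omega\cap U$, so $\inf_{u\in\Omega\cap U}\varphi(u)\le\varphi(x_k)$, and passing to the lower limit is immediate. Under condition (b), the points $y_k\in\Omega\cap U=\{\bar x\}$ constructed above must coincide with $\bar x$ for all large $k$, whence $x_k\to\bar x$; the lower semicontinuity of $\varphi$ at $\bar x$ then gives $\varphi(\bar x)\le\liminf_k\varphi(x_k)$, and the left-hand side equals $\inf_{u\in\Omega\cap U}\varphi(u)$ since $\Omega\cap U=\{\bar x\}$. Under condition (c), from $\Omega\cap U\subset U$ and the minimizing property of $\bar x$ we have $\inf_{u\in\Omega\cap U}\varphi(u)=\varphi(\bar x)\le\varphi(x_k)$ for every $k$, because $x_k\in U$.

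Under condition (d), fix $\eps>0$ and let $\delta>0$ be a modulus of uniform continuity of $\varphi$ on $U$. For $k$ large enough, the auxiliary points $y_k$ satisfy $y_k\in\Omega\cap U$ and $d(x_k,y_k)<\delta$, hence $|\varphi(x_k)-\varphi(y_k)|<\eps$, which gives
\[
\inf_{u\in\Omega\cap U}\varphi(u)\le\varphi(y_k)\le\varphi(x_k)+\eps.
\]
Passing to the lower limit in $k$ and then letting $\eps\downarrow 0$ concludes this case.

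There is no real obstacle to overcome here — each case follows from the sequential reformulation once one notices that the strict inclusion $\{x_k\}+\rho\B\subset U$ ensures that the nearby points in $\Omega$ land inside $U$ as well, which is precisely what enables the comparison with $\inf_{u\in\Omega\cap U}\varphi(u)$. The most delicate point is merely bookkeeping in case (d), where uniform continuity must be applied to the pair $(x_k,y_k)$ which lies in $U$.
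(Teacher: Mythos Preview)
Your proof is correct and follows essentially the same approach as the paper's: in each case you verify the defining inequality, only you pass through the sequential characterization of \cref{cor:sequential_characterization_lsc_relative_to_set} while the paper works directly with the $\liminf$ over subsets $U'$ with $U'+\rho\B\subset U$. The underlying ideas in the four cases (membership $x_k\in\Omega\cap U$; forcing $x_k\to\bar x$; minimality on $U$; pulling a nearby $\Omega$-point into $U$ and applying uniform continuity) are identical.
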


\begin{proof}
Under each of the condition (a), (b), and (c), the conclusion is straightforward since inequality \eqref{eq:lower_semicontinuity_relative_to_set-1}
is an immediate consequence of the following simple relations, respectively, holding with any $U'\subset U$:
\begin{enumerate}
\item
$\inf\limits_{u\in\Omega\cap U}\varphi(u)=\inf\limits_{u\in U}\varphi(u)$,
$\liminf\limits_{\substack{x\in U',\,
\dist_{\Omega}(x)\to0}}\varphi(x)=\inf\limits_{x\in U'}\varphi(x) \ge\inf\limits_{x\in U}\varphi(x)$;
\item
$\inf\limits_{u\in\Omega\cap U}\varphi(u)=\varphi(\bx)$,
$\liminf\limits_{\substack{x\in U',\,
\dist_{\Omega}(x)\to0}}\varphi(x)
=\liminf\limits_{x\to_{U'}\bx}\varphi(x)
\ge\liminf\limits_{x\to\bx}\varphi(x)\ge\varphi(\bx)$;
\item
$\inf\limits_{u\in\Omega\cap U}\varphi(u)=\varphi(\bx)$,
$\liminf\limits_{\substack{x\in U',\,
\dist_{\Omega}(x)\to0}}\varphi(x) \ge\varphi(\bx)$.
\end{enumerate}
It remains to prove the claim under
condition (d).
Let a number $\varepsilon>0$ be arbitrarily chosen.
Let a subset $U'\subset X$ and a number $\rho>0$ be such that $U'+\rho\B\subset U$.
By (d), there is a $\de>0$ such that
\[
\forall x,x'\in U\colon\quad
d(x,x')<\de\quad\Longrightarrow\quad |\varphi(x)-\varphi(x')|<\varepsilon.
\]
Let a point $x\in U'$ satisfy $\dist_\Omega(x)<\de':=\min(\rho,\de)$.
Then there is a point $x'\in\Omega$ satisfying $d(x,x')<\de'$.
Hence, $x,x'\in U$, $d(x,x')<\de$, and, consequently, $|\varphi(x)-\varphi(x')|<\varepsilon$.
Thus, we have
$\inf_{u\in\Omega\cap U}\varphi(u)\leq\varphi(x')
<\varphi(x)+\varepsilon$, and, consequently,
\begin{align*}
	\inf_{u\in\Omega\cap U}\varphi(u)
	\le
	\liminf_{{x\in U',\,
	\dist_{\Omega}(x)\to0}}\varphi(x)+\varepsilon.
\end{align*}
Taking the infimum on the right-hand side\ of the last inequality over $\eps$ and $U'$, we arrive at \eqref{eq:lower_semicontinuity_relative_to_set-1}.
\end{proof}

As a corollary, we obtain sufficient conditions for the lower semicontinuity property from
\cref{def:lower_semicontinuity_relative_to_set}\,\ref{item:def_lower_semicontinuity_relative_to_set_near_point}.
\begin{corollary}
\label{cor:sufficient_criteria_lower_semicontinuity_relative_to_a_set}
Let $X$ be a metric space, $\varphi\colon X\to\R_\infty$, $\Omega\subset X$, and $\bx\in\Omega\cap\dom\varphi$.
Then $\varphi$ is lower semicontinuous near $\bx$ relative to $\Omega$ provided that one of the following
conditions is satisfied:
\begin{enumerate}
\item\label{item:lsc_via_interiority}
$\bar x\in\intr\Omega$;
\item\label{item:lsc_via_isolatedness}
$\bar x$ is an isolated point of $\Omega$, and
$\varphi$ is lower semicontinuous at $\bar x$;
\item\label{item:lsc_via_local_minimality}
$\bar x$ is an (unconditional) local minimizer of $\varphi$;
\item\label{item:lsc_via_uniform_continuity}
$\varphi$ is uniformly continuous near $\bx$.
\end{enumerate}
\end{corollary}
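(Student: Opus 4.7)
The plan is to deduce each of the four cases directly from the corresponding case of \cref{prop:restricted_lower_semicontinuity} by choosing $U=\overline{B}_\de(\bx)$ for all sufficiently small $\de>0$. Concretely, I would fix a threshold $\de_0>0$ tailored to the assumption, and for each $\de\in(0,\de_0)$ verify that the hypotheses of one of the items~(a)--(d) of \cref{prop:restricted_lower_semicontinuity} are satisfied with $U:=\overline{B}_\de(\bx)$, which then yields the desired lower semicontinuity of $\varphi$ on $\overline{B}_\de(\bx)$ relative to $\Omega$, and thus the property from \cref{def:lower_semicontinuity_relative_to_set}\,\ref{item:def_lower_semicontinuity_relative_to_set_near_point}.

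For case~\ref{item:lsc_via_interiority}, since $\bx\in\intr\Omega$, choose $\de_0>0$ with $\overline{B}_{\de_0}(\bx)\subset\Omega$; then for every $\de\in(0,\de_0)$ we have $U=\overline{B}_\de(\bx)\subset\Omega$, so \cref{prop:restricted_lower_semicontinuity}\,(a) applies. For case~\ref{item:lsc_via_isolatedness}, isolatedness of $\bx$ in $\Omega$ gives some $\de_0>0$ with $\Omega\cap\overline{B}_{\de_0}(\bx)=\{\bx\}$; then for each $\de\in(0,\de_0)$ one has $\Omega\cap U=\{\bx\}$ and $\varphi$ is lower semicontinuous at $\bx$ by assumption, so \cref{prop:restricted_lower_semicontinuity}\,(b) applies.

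For case~\ref{item:lsc_via_local_minimality}, local minimality yields $\de_0>0$ such that $\varphi(x)\ge\varphi(\bx)$ for all $x\in\overline{B}_{\de_0}(\bx)$. Then for each $\de\in(0,\de_0)$, $\bx\in\Omega\cap U$ is a minimizer of $\varphi$ on $U=\overline{B}_\de(\bx)$, so \cref{prop:restricted_lower_semicontinuity}\,(c) applies. For case~\ref{item:lsc_via_uniform_continuity}, by hypothesis there exists $\de_0>0$ such that $\varphi$ is uniformly continuous on $\overline{B}_{\de_0}(\bx)$; uniform continuity is inherited by subsets, so for each $\de\in(0,\de_0)$, $\varphi$ is uniformly continuous on $U=\overline{B}_\de(\bx)$, and \cref{prop:restricted_lower_semicontinuity}\,(d) applies.

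The only substantive point is to confirm that each assumption localizes to arbitrarily small closed balls around $\bx$, which is immediate in all four cases; no genuine obstacle arises, and the proof is essentially a routine reduction to \cref{prop:restricted_lower_semicontinuity}. It is worth noting that in each case the condition $\Omega\cap U\cap\dom\varphi\ne\varnothing$ required by that proposition is automatically fulfilled because $\bx\in\Omega\cap U\cap\dom\varphi$ by the standing assumption $\bx\in\Omega\cap\dom\varphi$ together with $\bx\in U$.
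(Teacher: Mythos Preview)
Your proposal is correct and follows exactly the approach the paper intends: the corollary is stated immediately after \cref{prop:restricted_lower_semicontinuity} without an explicit proof, and your reduction via $U=\overline{B}_\de(\bx)$ for small $\de$ is precisely the routine localization that justifies the word ``corollary.'' The case-by-case verification you give is accurate and nothing is missing.
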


It follows from \cref{cor:sufficient_criteria_lower_semicontinuity_relative_to_a_set}\,\ref{item:lsc_via_uniform_continuity}
that each locally Lipschitz function is lower semicontinuous near a reference point relative
to any set containing this point.

The subsequent result can be directly distilled from
\cref{cor:lower_semicontinuity_svm_via_weak_sequential_lower_semicontinuity}.

\begin{proposition}\label{prop:lower_semicontinuity_set_via_weak_sequential_lower_semicontinuity}
	Fix a reflexive Banach space $X$, a closed and convex set $U\subset X$, and $\varphi\colon X\to\R_\infty$
	which is weakly sequentially lower semicontinuous on $U$.
	Let $\Omega\subset X$ be chosen such that $\Omega\cap U\cap\dom\varphi\neq\varnothing$ while $\Omega\cap U$ is weakly
	sequentially closed. Then $\varphi$ is lower semicontionuous on $U$ relative to $\Omega$.
\end{proposition}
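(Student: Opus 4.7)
The plan is to apply \cref{cor:lower_semicontinuity_svm_via_weak_sequential_lower_semicontinuity} with a specific choice of set-valued mapping which reduces the set-relative notion of lower semicontinuity to the already-covered mapping-relative one. Recall from the discussion preceding \cref{def:lower_semicontinuity_relative_to_set} that ``$\varphi$ is lower semicontinuous on $U$ relative to $\Omega$'' is precisely the specialisation of ``$\varphi$ is lower semicontinuous on $U$ relative to $\Phi$ at $\by$'' when $\gph\Phi:=\Omega\times Y$ for some (arbitrary) metric space $Y$; in that case $\Phi^{-1}(y)=\Omega$ for every $y\in Y$, so the right-hand sides of \eqref{eq:estimate_lsc_wrt_set_valued_map} and \eqref{eq:lower_semicontinuity_relative_to_set-1} agree.

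Concretely, I would take $Y:=\R$, choose any $\by\in\R$ (say $\by:=0$), and define $\Phi\colon X\tto Y$ by $\gph\Phi:=\Omega\times Y$. By the observation above, the lower semicontinuity of $\varphi$ on $U$ relative to $\Omega$ is equivalent to the lower semicontinuity of $\varphi$ on $U$ relative to $\Phi$ at $\by$, so it suffices to verify the hypotheses of \cref{cor:lower_semicontinuity_svm_via_weak_sequential_lower_semicontinuity} for this $\Phi$.

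The space $X$ is reflexive by assumption and $Y=\R$ is trivially Banach; the set $U$ is closed and convex, and $\varphi$ is weakly sequentially lower semicontinuous on $U$, as hypothesised. The nonemptiness condition $U\cap\Phi^{-1}(\by)\cap\dom\varphi=U\cap\Omega\cap\dom\varphi\ne\varnothing$ holds by assumption. The only substantive point is to check that $\gph\Phi\cap(U\times Y)=(\Omega\cap U)\times\R$ is weakly sequentially closed in $X\times\R$. For this, I would take any sequence $\{(x_k,y_k)\}_{k\in\N}\subset(\Omega\cap U)\times\R$ with $x_k\weakly x$ and $y_k\to y$; the weak sequential closedness of $\Omega\cap U$ (assumed) yields $x\in\Omega\cap U$, so that $(x,y)\in(\Omega\cap U)\times\R$. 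Hence the first alternative in \cref{cor:lower_semicontinuity_svm_via_weak_sequential_lower_semicontinuity} applies and delivers the claim.

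There is no real ``hard step'' here: the main content of the result is packaged in \cref{cor:lower_semicontinuity_svm_via_weak_sequential_lower_semicontinuity}, and the proof is simply a matter of recognising that the set-relative concept is the product-graph specialisation of the mapping-relative concept, and unpacking the weak sequential closedness of the product $(\Omega\cap U)\times\R$ from that of $\Omega\cap U$.
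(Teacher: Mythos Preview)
Your proposal is correct and follows essentially the same approach as the paper, which simply states that the result ``can be directly distilled from \cref{cor:lower_semicontinuity_svm_via_weak_sequential_lower_semicontinuity}'' without spelling out the details you provide.
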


As a corollary, we obtain the subsequent result.
\begin{corollary}\label{cor:lower_semicontinuity_near_point_set_via_weak_sequential_lower_semicontinuity}
	Fix a reflexive Banach space $X$, $\varphi\colon X\to\R_\infty$
	which is weakly sequentially lower semicontinuous, and a weakly sequentially closed set $\Omega\subset X$.
	Then, for each $\bar x\in\Omega\cap\dom\varphi$, $\varphi$ is lower semicontinuous near $\bar x$ relative to $\Omega$.
\end{corollary}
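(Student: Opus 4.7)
The plan is to deduce this directly from \cref{prop:lower_semicontinuity_set_via_weak_sequential_lower_semicontinuity} by choosing, for each $\delta > 0$, the set $U := \overline{B}_\delta(\bar x)$. To invoke the proposition, I need to check its three hypotheses for this particular $U$: namely that $U$ is closed and convex, that $\Omega \cap U \cap \dom\varphi \neq \varnothing$, and that $\Omega \cap U$ is weakly sequentially closed.

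The first two are essentially immediate: closed balls in a Banach space are by definition closed, and their convexity follows from the triangle inequality; the intersection contains $\bar x$ by the standing assumption $\bar x \in \Omega \cap \dom\varphi$. The only point that requires a word of justification is the weak sequential closedness of $\Omega \cap \overline{B}_\delta(\bar x)$. Here I would argue that $\overline{B}_\delta(\bar x)$, being norm-closed and convex, is weakly closed by Mazur's theorem, and in particular weakly sequentially closed; intersecting it with the weakly sequentially closed set $\Omega$ preserves this property. Since $\varphi$ is weakly sequentially lower semicontinuous on all of $X$, it is in particular weakly sequentially lower semicontinuous on $U$. The proposition then yields lower semicontinuity of $\varphi$ on $\overline{B}_\delta(\bar x)$ relative to $\Omega$.

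Because this argument works uniformly for every $\delta > 0$, one may simply take $\delta_0 := +\infty$ (or any positive number) in \cref{def:lower_semicontinuity_relative_to_set}\,\ref{item:def_lower_semicontinuity_relative_to_set_near_point}, which completes the proof. I do not foresee any real obstacle: the whole content of the corollary is the observation that reflexivity together with Mazur's theorem gives weak sequential closedness of the closed balls, which is exactly the ingredient missing in the passage from \cref{prop:lower_semicontinuity_set_via_weak_sequential_lower_semicontinuity} to a statement purely about the reference point $\bar x$ without a preselected localizing set $U$.
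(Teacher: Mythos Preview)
Your proposal is correct and follows essentially the same approach as the paper: the corollary is obtained directly from \cref{prop:lower_semicontinuity_set_via_weak_sequential_lower_semicontinuity} by taking $U:=\overline{B}_\delta(\bar x)$ for each $\delta>0$, and you have supplied exactly the verifications (closedness and convexity of closed balls, Mazur's theorem for weak sequential closedness of $\Omega\cap\overline{B}_\delta(\bar x)$) that are left implicit in the paper. The only cosmetic point is that $\delta_0$ in \cref{def:lower_semicontinuity_relative_to_set}\,\ref{item:def_lower_semicontinuity_relative_to_set_near_point} should be a finite positive number, which you already note as an alternative.
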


Note that whenever $X$ is finite-dimensional, $\varphi\colon X\to\R_\infty$ is lower semicontinuous, and $\Omega\subset X$
is closed, then the assumptions of \cref{cor:lower_semicontinuity_near_point_set_via_weak_sequential_lower_semicontinuity}
hold trivially.

The following statement shows that lower semicontinuity relative to a set is preserved under decoupled
summation.

\begin{proposition}\label{lem:decoupled_sum_rule}
Fix $n\in\N$ with $n\geq 2$.
For each $i\in\{1,\ldots,n\}$, let $X_i$ be a metric space, $\varphi_i\colon X_i\to\R_\infty$, $\Omega_i,U_i\subset X_i$, 
and $\Omega_i\cap U_i\cap\dom\varphi_i\ne\varnothing$.
Suppose that $\varphi_i$ is lower semicontinuous on $U_i$
relative to $\Omega_i$.
Then $\varphi\colon X_1\times\ldots\times X_n\to\R_\infty$ given by
\[
\forall (x_1,\ldots,x_n)\in X_1\times\ldots\times X_n\colon\quad
\varphi(x_1,\ldots,x_n):=\varphi_1(x_1)+\ldots+\varphi_n(x_n)
\]
is lower semicontinuous on $U:=U_1\times\ldots\times U_n$
relative to $\Omega:=\Omega_1\times\ldots\times\Omega_n$.
\end{proposition}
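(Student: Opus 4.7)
The proof will be a direct reduction to the sequential characterization of lower semicontinuity relative to a set, namely \cref{cor:sequential_characterization_lsc_relative_to_set}, using the fact that the product metric/norm is the maximum. The plan is to take an arbitrary test sequence in the product space, decompose it coordinate-wise, apply the characterization in each factor, and then reassemble the inequalities.

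Concretely, I would first verify the obvious identity
\[
\inf_{(u_1,\ldots,u_n)\in\Omega\cap U}\varphi(u_1,\ldots,u_n)
=\sum_{i=1}^n\inf_{u_i\in\Omega_i\cap U_i}\varphi_i(u_i),
\]
so that in particular $\Omega\cap U\cap\dom\varphi\neq\varnothing$ since the same holds for each factor. Then let $\{x^k\}_{k\in\N}\subset X:=X_1\times\ldots\times X_n$ be an arbitrary sequence satisfying $\dist_{\Omega}(x^k)\to0$ and $\{x^k\}_{k\in\N}+\rho\B\subset U$ for some $\rho>0$, and write $x^k=(x^k_1,\ldots,x^k_n)$ with $x^k_i\in X_i$. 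Because the product distance is the maximum of the factor distances, one checks that
\[
\dist_{\Omega_i}(x^k_i)\le\dist_{\Omega}(x^k),
\qquad
\{x^k_i\}_{k\in\N}+\rho\B_i\subset U_i,
\]
for every $i\in\{1,\ldots,n\}$, where $\B_i$ denotes the open unit ball in $X_i$. Hence each coordinate sequence is an admissible test sequence in $X_i$ relative to $\Omega_i$ and $U_i$.

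Applying the sequential characterization \cref{cor:sequential_characterization_lsc_relative_to_set} to each $\varphi_i$ yields
\[
\inf_{u_i\in\Omega_i\cap U_i}\varphi_i(u_i)
\le
\liminf_{k\to+\infty}\varphi_i(x^k_i)
\qquad (i=1,\ldots,n).
\]
Summing these inequalities and using the superadditivity of $\liminf$ (which is unproblematic here since each $\varphi_i$ takes values in $\R_\infty$), we obtain
\[
\inf_{u\in\Omega\cap U}\varphi(u)
=\sum_{i=1}^n\inf_{u_i\in\Omega_i\cap U_i}\varphi_i(u_i)
\le\sum_{i=1}^n\liminf_{k\to+\infty}\varphi_i(x^k_i)
\le\liminf_{k\to+\infty}\sum_{i=1}^n\varphi_i(x^k_i)
=\liminf_{k\to+\infty}\varphi(x^k).
\]
Since the sequence $\{x^k\}_{k\in\N}$ was arbitrary, another appeal to \cref{cor:sequential_characterization_lsc_relative_to_set} finishes the proof.

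I expect no real obstacle: the main point is simply that the maximum product metric makes both the distance-to-$\Omega$ condition and the ball-inclusion condition decouple coordinate-wise, and that once this decoupling is in place, superadditivity of $\liminf$ provides the only non-trivial step. A brief remark in the write-up confirming that the $\liminf$-sum inequality is well-defined in $\R_\infty$ (there is no $+\infty-\infty$ ambiguity because $\varphi_i$ is bounded below by $-\infty$ only in the trivial sense and otherwise nothing prevents finite liminfs from summing properly) is the only subtlety worth mentioning.
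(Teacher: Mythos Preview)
Your proof is correct and follows essentially the same idea as the paper's: decouple the product into factors using the maximum metric, apply the hypothesis in each factor, and reassemble. The only difference is packaging---the paper works directly with the defining inequality \eqref{eq:lower_semicontinuity_relative_to_set-1} and asserts the identity
\[
\sum_{i=1}^n
\inf_{\substack{U_i'+\rho_i\B\subset U_i,\\\rho_i>0}}
\liminf_{\substack{x_i\in U_i',\\\dist_{\Omega_i}(x_i)\to0}}\varphi_i(x_i)
=
\inf_{\substack{U'+\rho\B\subset U,\\\rho>0}}
\liminf_{\substack{x\in U',\\\dist_{\Omega}(x)\to0}}\varphi(x)
\]
without further comment, whereas you route through the sequential characterization \cref{cor:sequential_characterization_lsc_relative_to_set}, which makes the decoupling and the superadditivity-of-$\liminf$ step explicit. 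Your version is arguably cleaner; the one caveat worth spelling out is that if some $\liminf_{k}\varphi_i(x^k_i)=-\infty$, then by the factor hypothesis $\inf_{\Omega_i\cap U_i}\varphi_i=-\infty$ and the conclusion is trivial, so the potential $+\infty-\infty$ ambiguity never actually arises.
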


\begin{proof}
The assertion is a direct consequence of \cref{def:lower_semicontinuity_relative_to_set}\,\ref{item:def_lower_semicontinuity_relative_to_set}.
More precisely, we find
\begin{align*}
	\inf_{u\in\Omega\cap U}\varphi(u)
=\sum_{i=1}^n\inf_{u_i\in\Omega_i\cap U_i}\varphi_i(u_i)
&\le\sum_{i=1}^n
	\inf_{\substack{U_i'+\rho_i\B\subset U_i,\\\rho_i>0}}
	\liminf_{\substack{x_i\in U_i',\\
	\dist_{\Omega_i}(x_i)\to0}}\varphi_i(x_i)
\\&
	=	\inf_{\substack{U'+\rho\B\subset U,\\\rho>0}}
	\liminf_{\substack{x\in U',\\
	\dist_{\Omega}(x)\to0}}\varphi(x),
\end{align*}
and this proves the claim.
\end{proof}

\subsection{A sufficient condition for upper semicontinuity of the inverse of a set-valued mapping}\label{sec:sufficient_condition_upper_semicontinuity}

The next statement presents a condition ensuring validity of the upper semicontinuity assumption
which appears in \cref{prop:decomposition_lsc}\,\ref{item:lower_semicontinuity_via_decomposition}.

\begin{proposition}\label{prop:sufficient_condition_for_quasi_upper_semicontinuity_feasiblity_map}
	Let $X$ and $Y$ be metric spaces, $\Phi\colon X\tto Y$, and $(\bx,\by)\in\gph\Phi$.
	Assume that $\Phi$ is metrically subregular at $(\bx,\by)$, i.e., that there exist a neighborhood $U$ of $\bx$ and a constant
	$L>0$ such that
	\begin{equation}\label{eq:metric_subregularity}
		\forall x\in U\colon\quad
		\dist_{\Phi^{-1}(\by)}(x)\leq L\,\dist_{\Phi(x)}(\by).
	\end{equation}
	Then, for each set $U'\subset U$ satisfying $\bx\in U'$, the mapping $S_{U'}\colon Y\tto X$, given by
	$S_{U'}(y):=\Phi^{-1}(y)\cap U'$ for each $y\in Y$, is upper semicontinuous at $\by$.
\end{proposition}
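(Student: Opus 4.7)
My plan is to verify the defining limit of \Cref{def:upper_semicontinuity} for $S_{U'}$ directly, through a single application of the metric subregularity estimate~\eqref{eq:metric_subregularity}. I would fix an arbitrary subset $U'\subset U$ with $\bx\in U'$, an arbitrary sequence $\{y_k\}_{k\in\N}\subset Y$ with $y_k\to\by$, and arbitrary points $x_k\in S_{U'}(y_k)=\Phi^{-1}(y_k)\cap U'$, with the goal of showing $\dist_{S_{U'}(\by)}(x_k)\to 0$.

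The decisive observation is that the condition $x_k\in\Phi^{-1}(y_k)$ is equivalent to $y_k\in\Phi(x_k)$, so one trivially has $\dist_{\Phi(x_k)}(\by)\le d(y_k,\by)$. Moreover, $x_k\in U'\subset U$ makes the subregularity inequality~\eqref{eq:metric_subregularity} applicable at $x_k$; combining these yields
\begin{equation*}
\dist_{\Phi^{-1}(\by)}(x_k)
\le L\,\dist_{\Phi(x_k)}(\by)
\le L\,d(y_k,\by),
\end{equation*}
and the right-hand side tends to zero as $k\to\infty$. In particular, there exist $\tilde x_k\in\Phi^{-1}(\by)$ with $d(x_k,\tilde x_k)\to 0$, which is the core fact the proof needs.

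The main obstacle I expect is the final passage from $\Phi^{-1}(\by)$ to $S_{U'}(\by)=\Phi^{-1}(\by)\cap U'$ in the relevant distance, because the near-projections $\tilde x_k$ need not themselves lie in $U'$. Here the hypothesis $\bx\in U'$ becomes essential: it guarantees $\bx\in\Phi^{-1}(\by)\cap U'=S_{U'}(\by)$, providing an explicit anchor in $S_{U'}(\by)$, so that the triangle inequality
$\dist_{S_{U'}(\by)}(x_k)\le d(x_k,\bx)\le d(x_k,\tilde x_k)+d(\tilde x_k,\bx)$
reduces matters to controlling the term $d(\tilde x_k,\bx)$ on top of the already-vanishing $d(x_k,\tilde x_k)$. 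The conclusion then follows by arbitrariness of the testing sequence, matching \Cref{def:upper_semicontinuity}.
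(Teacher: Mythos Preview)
Your argument has a genuine gap at the final step. You correctly derive $\dist_{\Phi^{-1}(\by)}(x_k)\le L\,d(y_k,\by)\to0$, which is exactly the paper's route as well, and you correctly recognise that the passage from $\Phi^{-1}(\by)$ to $S_{U'}(\by)=\Phi^{-1}(\by)\cap U'$ is the delicate point. But the triangle-inequality bound $\dist_{S_{U'}(\by)}(x_k)\le d(x_k,\bx)\le d(x_k,\tilde x_k)+d(\tilde x_k,\bx)$ does not help: nothing in the hypotheses forces $d(\tilde x_k,\bx)$ (equivalently $d(x_k,\bx)$) to tend to zero. The points $x_k$ range over all of $\Phi^{-1}(y_k)\cap U'$, and $U'$ may extend far from $\bx$; the near-projections $\tilde x_k\in\Phi^{-1}(\by)$ stay close to $x_k$, not to $\bx$. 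Your sentence ``the conclusion then follows'' glosses over precisely the uncontrolled term.

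In fact this gap cannot be repaired, because the assertion as stated is false. Take $X=\R^2$, $Y=\R$, $\Phi(x_1,x_2):=\{x_2\}$, $\bx:=(0,0)$, $\by:=0$; then $\Phi^{-1}(0)=\R\times\{0\}$ and \eqref{eq:metric_subregularity} holds with $L=1$ and $U=X$. Choose $U':=\{(0,0)\}\cup\{(1,t):t\neq0\}\subset U$, so that $\bx\in U'$, $S_{U'}(0)=\{(0,0)\}$, and $S_{U'}(y)=\{(1,y)\}$ for $y\neq0$. With $y_k:=1/k$ and $x_k:=(1,1/k)\in S_{U'}(y_k)$ one gets $\dist_{S_{U'}(0)}(x_k)=\sqrt{1+1/k^2}\to1$, so $S_{U'}$ is \emph{not} upper semicontinuous at $\by$ in the sense of \cref{def:upper_semicontinuity}. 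The paper's own proof contains the analogous unjustified step, namely the asserted equality $\dist_{S_{U'}(\by)}(x)=\dist_{\Phi^{-1}(\by)}(x)$. What both arguments \emph{do} establish is the weaker conclusion $\dist_{\Phi^{-1}(\by)}(x)\to0$ for $x\in S_{U'}(y)$ as $y\to\by$, and this is all that is actually used downstream in the proof of \cref{prop:decomposition_lsc}\,\ref{item:lower_semicontinuity_via_decomposition} and hence in \cref{cor:lower_semicontinuity_relative_to_svm_decomposition}.
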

\begin{proof}
	Let a number $\varepsilon>0$ as well as $U'\subset U$ with $\bar x\in U'$ be given.
	Choose a number $\de\in(0,\eps/L)$.
	Then, for each $y\in B_\de(\by)$ and each $x\in S_{U'}(y)$, condition \eqref{eq:metric_subregularity} yields
	$\dist_{S_{U'}(\by)}(x)=\dist_{\Phi^{-1}(\by)}(x)\le Ld(y,\by)<L\de<\eps$.
	By \cref{def:upper_semicontinuity}, $S_{U'}$ is upper semicontinuous at $\by$.
\end{proof}

We note that the metric subregularity condition \eqref{eq:metric_subregularity}
from \cref{prop:sufficient_condition_for_quasi_upper_semicontinuity_feasiblity_map} already
amounts to a qualification condition addressing sets of type $\{x\in X\,|\,\by\in\Phi(x)\}$,
see \cite[Section~5]{Gfrerer2013}.
Sufficient conditions for metric subregularity can be found e.g.\ in \cite{BaiYeZhang2019,DontchevRockafellar2014,DontchevGfrererKrugerOutrata2020,Ioffe2017,Kruger2015,Marechal2018,ZhengNg2010}.

We would like to point the reader's attention to the fact that metric subregularity of $\Phi$ is a
quantitative continuity property coming along with a
\emph{modulus} of subregularity $L>0$
while upper semicontinuity of the mappings
$S_{U'}$ in \cref{prop:sufficient_condition_for_quasi_upper_semicontinuity_feasiblity_map} is just a
qualitative continuity property. In this regard, there
exist weaker sufficient conditions ensuring
validity of the upper semicontinuity requirements from \cref{prop:decomposition_lsc}\,\ref{item:lower_semicontinuity_via_decomposition}.
However, it is not clear if such conditions can be easily checked in terms of initial problem data
while this is clearly possible for metric subregularity as the aforementioned list of references underlines.
Finally, we would like to mention that in case where one wants to avoid fixing the component $\bar x\in X$ in the preimage space
in \cref{prop:sufficient_condition_for_quasi_upper_semicontinuity_feasiblity_map}, it is possible to demand that $\Phi^{-1}$
is Lipschitz upper semicontinuous at $\bar y$ in the sense of \cite[p.~10]{KlatteKummer2002}. Again, this is a
quantitative continuity property.

\begin{example}\label{ex:upper_semicontinuity}
	Let $G\colon X\to Y$ be a single-valued mapping between Banach spaces.
	Furthermore, let $C\subset X$ and $K\subset Y$ be nonempty, closed sets.
	We investigate the feasibility mapping $\Phi\colon X\tto Y\times X$ given by
	$\Phi(x):=(G(x)-K,x-C)$ for all $x\in X$ as well as some point $\bx\in X$
	such that $(\bx,(0,0))\in\gph\Phi$ and some neighborhood $U$ of $\bx$.
	Let us define $S\colon Y\times X\tto X$ by means of $S(y,z):=\Phi^{-1}(y,z)\cap U$
	for each pair $(y,z)\in Y\times X$.
	One can check that $S$ is upper semicontinuous at $(0,0)$ if and only if
	\[
		\dist_{K\times C}((G(x_k),x_k))\to 0
		\quad\Longrightarrow\quad
		\lim\limits_{k\to+\infty}\dist_{G^{-1}(K)\cap C}(x_k)=0
	\]
	for each sequence $\{x_k\}_{k\in\N}\subset U$,
	and this is trivially satisfied if $G$ is continuous and $X$ is finite-dimensional.
	For the purpose of completeness, let us also mention that $S$ is partially weakly sequentially upper semicontinuous at $(0,0)$
	if and only if
	\begin{equation}\label{eq:partially_weakly_sequantially_usc_geonetric_constraints}
		x_k\weakly x,\quad \dist_{K\times C}((G(x_k),x_k))\to 0
		\quad\Longrightarrow\quad
		x\in G^{-1}(K)\cap C
	\end{equation}
	is valid for each sequence $\{x_k\}_{k\in\N}\subset U$ and each point $x\in U$.
	Again, this is inherent if $G$ is continuous while $X$ is finite-dimensional and $U$ is closed.
	
	In infinite-dimensional situations, whenever $G$ is continuously Fr\'{e}chet differentiable and
	$C$ as well as $K$ are convex,
	Robinson's constraint qualification, given by
	\[
		G'(\bar x)\left[\bigcup\nolimits_{\alpha\in[0,+\infty)}\alpha(C-\bar x)\right]
		-
		\bigcup\nolimits_{\alpha\in[0,+\infty)}\alpha(K-G(\bar x))
		=
		Y,
	\]
	is equivalent to so-called metric regularity of $\Phi$ at $(\bx,(0,0))$,
	see \cite[Proposition~2.89]{BonnansShapiro2000}, and the latter
	is sufficient for metric subregularity of $\Phi$ at $(\bx,(0,0))$.
\end{example}

The final corollary of this section now follows from
\cref{prop:decomposition_lsc,cor:sufficient_criteria_lower_semicontinuity_relative_to_a_set,prop:sufficient_condition_for_quasi_upper_semicontinuity_feasiblity_map}.
\begin{corollary}\label{cor:lower_semicontinuity_relative_to_svm_decomposition}
	Fix metric spaces $X$ and $Y$, $\Phi\colon X\tto Y$, $\varphi\colon X\to\R_\infty$,  $\by\in Y$, and $\bx\in\Phi^{-1}(\by)\cap\dom\varphi$.
	Assume that $\Phi$ is metrically subregular at $(\bx,\by)$ and that $\varphi$ satisfies one of the
	conditions~\ref{item:lsc_via_interiority}-\ref{item:lsc_via_uniform_continuity} of \cref{cor:sufficient_criteria_lower_semicontinuity_relative_to_a_set}.
	Then $\varphi$ is lower semicontinuous near $\bx$ relative to $\Phi$ at $\by$.	
\end{corollary}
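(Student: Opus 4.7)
The plan is to factor the property through \cref{prop:decomposition_lsc}\,\ref{item:lower_semicontinuity_via_decomposition}: metric subregularity will be used to secure the upper semicontinuity ingredient via \cref{prop:sufficient_condition_for_quasi_upper_semicontinuity_feasiblity_map}, while whichever of conditions~\ref{item:lsc_via_interiority}--\ref{item:lsc_via_uniform_continuity} from \cref{cor:sufficient_criteria_lower_semicontinuity_relative_to_a_set} is satisfied will supply lower semicontinuity of $\varphi$ relative to the preimage set $\Phi^{-1}(\by)$. Feeding these two pieces into \cref{prop:decomposition_lsc}\,\ref{item:lower_semicontinuity_via_decomposition} then yields the desired relative lower semicontinuity with respect to $\Phi$ at $\by$.

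Concretely, I would first harvest the two neighborhoods of $\bx$ coming from the hypotheses. Metric subregularity provides a neighborhood $U$ of $\bx$ on which the estimate \eqref{eq:metric_subregularity} holds, while \cref{cor:sufficient_criteria_lower_semicontinuity_relative_to_a_set}, applied with $\Omega:=\Phi^{-1}(\by)$, delivers a radius $\de_1>0$ such that $\varphi$ is lower semicontinuous on $\overline B_\de(\bx)$ relative to $\Phi^{-1}(\by)$ for every $\de\in(0,\de_1)$; note that the hypothesis $\bx\in\Phi^{-1}(\by)\cap\dom\varphi$ is exactly what is needed to invoke that corollary. I would then pick $\de_0\in(0,\de_1)$ small enough that $\overline B_{\de_0}(\bx)\subset U$. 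For an arbitrary $\de\in(0,\de_0)$, setting $U_\de:=\overline B_\de(\bx)$ and applying \cref{prop:sufficient_condition_for_quasi_upper_semicontinuity_feasiblity_map} with $U':=U_\de$ makes the mapping $y\mapsto\Phi^{-1}(y)\cap U_\de$ upper semicontinuous at $\by$. Combined with the set-relative lower semicontinuity of $\varphi$ on $U_\de$, condition~\ref{item:lower_semicontinuity_via_decomposition} of \cref{prop:decomposition_lsc} is met, which yields lower semicontinuity of $\varphi$ on $U_\de$ relative to $\Phi$ at $\by$. Since $\de\in(0,\de_0)$ is arbitrary, \cref{def:lower_semicontinuity_relative_to_svm}\,\ref{item:def_lower_semicontinuity_near_point} is verified.

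The only real delicacy is the alignment of the two radii: the subregularity estimate is only guaranteed on the neighborhood $U$, whereas the set-relative lower semicontinuity is parameterised by an independent radius $\de_1$, and the claim requires a single threshold $\de_0$ below which \emph{both} properties are available to be passed to \cref{prop:sufficient_condition_for_quasi_upper_semicontinuity_feasiblity_map} and \cref{prop:decomposition_lsc}. Once this matching is in place, the argument is a straight chaining of results previously established in this section, with no additional estimate to be worked out.
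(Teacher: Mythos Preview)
Your proposal is correct and matches the paper's own approach: the paper simply states that the corollary follows from \cref{prop:decomposition_lsc}, \cref{cor:sufficient_criteria_lower_semicontinuity_relative_to_a_set}, and \cref{prop:sufficient_condition_for_quasi_upper_semicontinuity_feasiblity_map}, which is exactly the chain you assemble. Your explicit handling of the radius alignment is a bit more detailed than what the paper writes out, but the argument is the same.
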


\section{Optimality conditions and approximate stationarity}\label{sec:main_result}

We consider the optimization problem
\begin{equation}\label{eq:basic_problem}\tag{P}
	\min\{\varphi(x)\,|\,\bar y\in\Phi(x)\},
\end{equation}
where $\varphi\colon X\to\R_\infty$ is an arbitrary function,
$\Phi\colon X\rightrightarrows Y$ is a set-valued mapping
{between Banach spaces},
and $\bar y\in\Im \Phi$.
Let us mention that the model \eqref{eq:basic_problem} is quite general and covers numerous
important classes of optimization problems, see e.g.\ \cite{Gfrerer2013,Mehlitz2020b}
for a discussion.
The constrained problem \eqref{eq:basic_problem} is obviously equivalent to the unconditional minimization of
the restriction $\varphi_{\Phi^{-1}(\by)}$ of $\varphi$ to $\Phi^{-1}(\by)$.
We say that $\bx$ is an $\eps$-minimal point of problem \eqref{eq:basic_problem} on $U$
if it is an $\eps$-minimal point of $\varphi_{\Phi^{-1}(\by)}$ on $U$.
Analogously, stationary points of \eqref{eq:basic_problem} are defined.

The next theorem presents dual (i.e., subdifferential/coderivative based) necessary conditions for
$\eps$-minimal points of problem \eqref{eq:basic_problem}.

\begin{theorem}\label{thm:main_result}
Let $X$ and $Y$ be Banach spaces,
$\varphi\colon X\to\R_\infty$ be
lower semicontinuous, $\Phi\colon X\rightrightarrows Y$ have closed graph,
and fix $\bar y\in Y$, $\bx\in\dom \varphi\cap\Phi^{-1}(\by)$, $U\subset X$, $\eps>0$, as well as $\de>0$.
Assume that ${B}_{\de}(\bx)\subset U$, and
\begin{enumerate}
\item
on $U$, $\varphi$ is bounded from below and
lower semicontinuous relative to $\Phi$ at $\by$;
\item\label{item:Aplund_or_convexity}
either $X$ and $Y$ are Asplund, or
$\varphi$ and $\gph\Phi$ are convex.
\end{enumerate}
Suppose that $\bx$ is an $\eps$-minimal point of problem \eqref{eq:basic_problem} on $U$.
Then, for each $\eta>0$,
there exist points
$x_1,x_2\in B_\de(\bx)$
and $y_2\in \Phi(x_2)\cap B_\eta(\by)$
such that
$\norm{x_2-x_1}<\eta$,
$\dist_{\gph\Phi}(x_1,\by)<\eta$,
$\varphi(x_1)<\varphi(\bx)+\eta$,
and
\[
	0\in{\sd}\varphi(x_1)+\Im D^*\Phi(x_2,y_2)+\frac{2\eps}\de\B^*.
\]
Moreover, if $\varphi$ and $\gph\Phi$ are convex, then
$\varphi(x_1)\le \varphi(\bx)$.
\end{theorem}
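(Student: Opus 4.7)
The plan is to convert the $\eps$-minimality of $\bx$ into an approximate unconstrained minimization of a Lipschitzified penalty function on $X\times Y$, invoke Ekeland's variational principle in a suitably rescaled product metric, and then apply the appropriate subdifferential sum rule together with \cref{lem:subdifferential_distance_function} to read off the claimed coderivative inclusion.

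First, combining $\inf_{u\in\Phi^{-1}(\by)\cap U}\varphi(u)>\varphi(\bx)-\eps$ with \cref{def:lower_semicontinuity_relative_to_svm}\,\ref{item:def_lower_semicontinuity_on_a_set} applied to the admissible pair $U':=B_{\delta''}(\bx)$, $\rho:=\delta-\delta''$ for some $\delta''\in(\delta/2,\delta)$ produces a quantitative gap: there exist $\gamma>0$ and $\eta_1\in(0,\min\{\delta,2\eta\})$ such that $\varphi(x)>\varphi(\bx)-\eps+\gamma$ whenever $x\in B_{\delta''}(\bx)$, $d(y,\by)<\eta_1$, and $\dist_{\gph\Phi}(x,y)<\eta_1$. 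Let $m:=\inf_U\varphi$ and pick $c_0\ge 1$ large enough that $c_0\eta_1/2>\varphi(\bx)-\eps+\gamma-m$; then the penalty
$$F(x,y):=\varphi(x)+c_0\|y-\by\|+c_0\dist_{\gph\Phi}(x,y)$$
is lsc and bounded below on the complete set $\overline{B}_{\delta'}(\bx)\times Y$ (any $\delta'\in(\delta/2,\delta'')$), with $F(\bx,\by)=\varphi(\bx)$, and a case split on whether $\|y-\by\|$ or $\dist_{\gph\Phi}(x,y)$ exceeds $\eta_1/2$ combined with the gap above gives $\inf F\ge\varphi(\bx)-\eps+\gamma/2$. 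Hence $(\bx,\by)$ is $(\eps-\gamma/2)$-minimal for $F$.

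Equip $X\times Y$ with the rescaled metric $d((x_1,y_1),(x_2,y_2)):=\max(\|x_1-x_2\|,K\|y_1-y_2\|)$, $K:=\delta/\eta$, and apply \cref{lem:Ekeland} with $\lambda:=\delta/2$ to obtain $(\hat x,\hat y)$ with $\|\hat x-\bx\|\le\delta/2$, $\|\hat y-\by\|\le\eta/2$, $\varphi(\hat x)\le\varphi(\bx)$, and — since $\hat x$ lies in the interior of $\overline{B}_{\delta'}(\bx)$ and $\max\le\mathrm{sum}$ — serving as an unconstrained local minimizer of
$$(x,y)\mapsto F(x,y)+\alpha\|x-\hat x\|+\alpha K\|y-\hat y\|,\qquad \alpha:=\tfrac{2(\eps-\gamma/2)}{\delta}.$$
Decoupling the resulting relation $0\in\partial[\,\cdot\,](\hat x,\hat y)$ proceeds via the fuzzy sum rule \cref{lem:SR}\,\ref{item:fuzzy_sum_rule} in the Asplund case — all summands except $\varphi\circ\pi_X$ are Lipschitz — or via the exact convex sum rule \cref{lem:SR}\,\ref{item:convex_sum_rule} in the convex case — the continuous convex terms are peeled off first from the lsc convex $\varphi\circ\pi_X$, which automatically forces $\partial\varphi(\hat x)\ne\varnothing$. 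This yields a point $x_1$ with $\varphi(x_1)<\varphi(\hat x)+\text{small}$ (exactly $x_1=\hat x$ in the convex case, so $\varphi(x_1)\le\varphi(\bx)$), a subgradient $\xi\in\partial\varphi(x_1)$, an element of $c_0\partial\dist_{\gph\Phi}$ evaluated at a point near $(\hat x,\hat y)$, and Ekeland-term subgradients with $x$-component bounded by $\alpha$.

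Finally, \cref{lem:subdifferential_distance_function}\,\ref{item:sdf_distance_function_out_of_set_points} (or \ref{item:sdf_distance_function_in_set_points}, when the evaluation point is already on the graph) converts each element of $c_0\partial\dist_{\gph\Phi}$ into a pair $(\xi^*,\zeta^*)\in N_{\gph\Phi}(x_2,y_2)$ at a graph point $(x_2,y_2)$ arbitrarily close to $(\hat x,\hat y)$; using $(\xi^*,\zeta^*)\in N_{\gph\Phi}(x_2,y_2)\Leftrightarrow\xi^*\in D^*\Phi(x_2,y_2)(-\zeta^*)\subset\Im D^*\Phi(x_2,y_2)$, the $x$-component of the sum rule inclusion becomes
$$0\in\partial\varphi(x_1)+\Im D^*\Phi(x_2,y_2)+(\alpha+\text{small})\overline{\B}^*_{X^*}.$$
Choosing the fuzzy and projection tolerances each below $\gamma/(2\delta)$ drives the total slack strictly below $\tfrac{2\eps}{\delta}$, and the bounds $\|\hat x-\bx\|\le\delta/2$, $\|\hat y-\by\|\le\eta/2$ together with the fuzzy and projection displacements (controlled by $\dist_{\gph\Phi}(\hat x,\hat y)\le(\varphi(\bx)-m)/c_0$, small for large $c_0$) verify $x_1,x_2\in B_\delta(\bx)$, $\|x_2-x_1\|<\eta$, $y_2\in\Phi(x_2)\cap B_\eta(\by)$, $\dist_{\gph\Phi}(x_1,\by)<\eta$, and $\varphi(x_1)<\varphi(\bx)+\eta$. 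The chief technical subtlety is the rescaling $K=\delta/\eta$ in the Ekeland metric, forced because $\eta$ may be arbitrarily small relative to $\delta$; it inflates the $Y^*$-component of the Ekeland slack, but only the $X^*$-slack appears in the final inclusion, so the enlarged $y$-slack merely determines an admissible $\zeta^*\in Y^*$ for the coderivative, and the remaining bookkeeping of $c_0,\gamma$, the fuzzy tolerance, and the projection error is routine but delicate.
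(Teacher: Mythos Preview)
Your approach is correct, but it differs from the paper's in two structural respects.

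First, you exploit the lower-semicontinuity-relative-to-$\Phi$ hypothesis \emph{before} Ekeland: fixing a single $U'=B_{\delta''}(\bx)$ and reading off the liminf in \eqref{eq:estimate_lsc_wrt_set_valued_map} gives you a quantitative gap $\gamma>0$ and threshold $\eta_1$ up front, which then permits a \emph{single} fixed penalty level $c_0$. The paper instead runs Ekeland for an entire sequence of penalty levels $\gamma=k\to\infty$, producing points $(x_k,y_k)$ with $y_k\to\by$ and $\dist_{\gph\Phi}(x_k,y_k)\to0$, and only \emph{afterwards} invokes the relative-lsc property through its sequential characterization (\cref{prop:sequential_characterization_lsc}) to select one good index $\bar k$. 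Second, to force $\hat x$ into the interior of $\overline B_{\delta'}(\bx)$ (so Fermat's rule applies unconstrained), you use the full Ekeland principle with distance bound in a rescaled max-metric on $X\times Y$; the paper works only with the weak form in \cref{lem:Ekeland} (no distance bound) and compensates by adding a quadratic regularizer $\gamma_1\|x-\bx\|^2$, with the relative-lsc step then certifying $\|\hat x-\bx\|<\delta'$. Your route is more direct --- one Ekeland application, no auxiliary sequence, no quadratic term --- at the price of invoking the $\lambda$-form of Ekeland, which is standard but not among the paper's stated lemmas. The paper's construction is more self-contained relative to its toolkit and, as a byproduct, yields the hierarchy of primal slope conditions recorded in \cref{rem:primal_conditions}. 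The remaining steps (fuzzy/convex sum rule, passage through \cref{lem:subdifferential_distance_function}, absorbing the $Y^*$-slack into the coderivative argument) are handled the same way in both proofs.
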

\begin{proof}
Since $\varphi$ is bounded from below on $U$, and
$\bx$ is an $\eps$-minimal of problem \eqref{eq:basic_problem} on $U$,
there exist numbers $c>0$ and $\eps'\in(0,\eps)$ such that
\begin{subequations}\label{eq:boundedness_and_eps_minimality}
	\begin{alignat}{2}
		\label{item:lsc}
			&\forall x\in U\colon & \varphi(x)&>\varphi(\bx)-c,\\
		\label{thm:main_result-0}
			&\forall x\in \Phi^{-1}(\by)\cap U\colon\quad &\varphi(x)&>\varphi(\bar x)-\eps'.
	\end{alignat}
\end{subequations}
For $\ga>0$ and $\ga_1>0$, let the functions
$\phi_\gamma,\hat\phi_{\gamma,\ga_1}\colon X\times Y\to\R_\infty$
be given by
\begin{subequations}\label{eq:surrogate_functions}
	\begin{align}
		\label{phi}
			\forall (x,y)\in X\times Y\colon\qquad
			\phi_\ga(x,y)&:=
			\varphi(x)+\ga\bigl(\norm{y-\by}+\dist_{\gph\Phi}(x,y)\bigr),
			\\
		\label{phi'}
			\hat\phi_{\gamma,\ga_1}(x,y)&:=
			\phi_\ga(x,y)+\ga_1\norm{x-\bx}^2.
	\end{align}
\end{subequations}
Set $\de_0:=\de\eps'/\eps$, and choose numbers $\de'\in(\de_0,\de)$ and $\xi\in(0,\de-\de')$ such that $\xi(\de'+2)<2(\eps\de'/\de-\eps')$.
Fix an arbitrary $\eta>0$ and a positive number $\eta'<\min(\eta,2(\de-\de'))$.
Set
$\ga_1:=(\eps'+\xi)/(\de')^2$.
Observe that $\hat\phi_{\gamma,\ga_1}(\bx,\by)=\varphi(\bx)$, and $\hat\phi_{\gamma,\ga_1}$ is bounded from below on $\overline{B}_{\de'}(\bar x)\times Y$ due
to \eqref{item:lsc}.
By Ekeland's variational principle, see \cref{lem:Ekeland},
for each $k\in\N$, there exists a point
$(x_k,y_k)\in\overline{B}_{\de'}(\bx)\times Y$ such that
\begin{subequations}\label{eq:Ekeland_consequences}
	\begin{align}
		\label{eq:Ekeland_1}
			&\hat\phi_{k,\ga_1}(x_k,y_k)\le \varphi(\bx),\\
		\label{eq:Ekeland_2}
	\forall (x,y)\in\overline{B}_{\de'}(\bar x)\times Y\colon\quad
	&\hat\phi_{k,\ga_1}(x,y)+\xi\norm{(x,y)-(x_k,y_k)}
	\ge
	\hat\phi_{k,\ga_1}(x_k,y_k).
	\end{align}
\end{subequations}
It follows from \eqref{item:lsc}, \eqref{eq:surrogate_functions}, and \eqref{eq:Ekeland_1} that
\begin{equation*}
k\bigl(\norm{y_k-\by}+\dist_{\gph\Phi}(x_k,y_k)\bigr)
	+\ga_1\norm{x_k-\bx}^2
	\le \varphi(\bx)-\varphi(x_k)<c,
\end{equation*}
and, consequently,
\begin{subequations}\label{eq:Ekeland_consequences_revisited}
	\begin{align}
	\label{eq:Ekeland_3}
	\norm{y_k-\by}+\dist_{\gph\Phi}(x_k,y_k)<c/k,
	\\
	\label{eq:Ekeland_4}
	\ga_1\norm{x_k-\bx}^2\le \varphi(\bx)-\varphi(x_k)
	\end{align}
\end{subequations}
are valid for all $k\in\N$.
By \eqref{eq:Ekeland_3}, $y_k\to\by$ and $\dist_{\gph\Phi}(x_k,y_k)\to0$ as $k\to+\infty$,
and $y_k\in B_{\eta'/4}(\by)$ as well as $\dist_{\gph\Phi}(x_k,y_k)<\eta'/4$ follow for all $k>4c/\eta'$.
Recall that $\{x_k\}_{k\in\N}
+\rho\B\subset B_\de(\bx)\subset U$ for any positive $\rho<\de-\de'$.
By \cref{prop:sequential_characterization_lsc}, there exist an integer $\bar k>4c/\eta'$
and a point $x'\in\Phi^{-1}(\by)\cap U$ such that $\varphi(x')<\varphi(x_{\bar k})+\xi$.
By \eqref{thm:main_result-0}, we have
$\varphi(\bx)-\varphi(x')<\eps'$.
Set $\ga:=\bar k$, $\hat x:=x_{\bar k}$, and $\hat y:=y_{\bar k}$.
Thus, $\hat y\in B_{\eta'/4}(\by)$ and
$\dist_{\gph\Phi}(\hat x,\hat y)<\eta'/4$.
By \eqref{eq:Ekeland_4},
\begin{align*}
\ga_1\norm{\hat x-\bx}^2
\le (\varphi(\bx)-\varphi(x')) +(\varphi(x')-\varphi(\hat x))
<\eps'+\xi=\ga_1(\de')^2.
\end{align*}
Hence, we find $\norm{\hat x-\bx}<\de'$.
In view of \eqref{phi'},
condition \eqref{eq:Ekeland_1} is equivalent to
\begin{equation}
\label{eq:intermediate_estimate:main_result}
\phi_\ga(\hat x,\hat y)+\ga_1\norm{\hat x-\bx}^2\le \varphi(\bx).
\end{equation}

For each $(x,y)\in B_{\de'}(\bar x)\times Y$ different from $(\hat x,\hat y)$,
it follows from \eqref{phi'} that
\begin{align*}
\frac{\phi_\ga(\hat x,\hat y)-\phi_\ga(x,y)}
{\norm{(x,y)-(\hat x,\hat y)}}
&=
\frac{\hat\phi_{\ga,\ga_1}(\hat x,\hat y)-\hat\phi_{\ga,\ga_1}(x,y)
	+\ga_1\bigl(\norm{x-\bx}^2-\norm{\hat x-\bx}^2\bigr)}
	{\norm{(x,y)-(\hat x,\hat y)}}
\\
&\le
\frac{\hat\phi_{\ga,\ga_1}(\hat x,\hat y)-\hat\phi_{\ga,\ga_1}(x,y)
	+\ga_1\norm{x-\hat x}(\norm{x-\bx}+\norm{\hat x-\bx})}
	{\norm{(x,y)-(\hat x,\hat y)}}
\\
&\le
\frac{\hat\phi_{\ga,\ga_1}(\hat x,\hat y)-\hat\phi_{\ga,\ga_1}(x,y)}
{\norm{(x,y)-(\hat x,\hat y)}}
	+\ga_1\bigl(\norm{x-\bx}+\norm{\hat x-\bx}\bigr),
\end{align*}
and, consequently, in view of \eqref{eq:Ekeland_2},
\begin{align*}
\sup_{(x,y)\in(B_{\de'}(\bar x)\times Y)
\setminus\{(\hat x,\hat y)\}}
\frac{\phi_\ga(\hat x,\hat y)-\phi_\ga(x,y)}
{\norm{(x,y)-(\hat x,\hat y)}}
<\xi+2\ga_1\de'=\frac{2\eps'+\xi(\de'+2)}\de'
<\frac{2\eps}\de.
\end{align*}
Since $\hat x$ is an interior point of $\overline B_{\de'}(\bx)$,
it follows that
\begin{align}\label{eq:main_result_local_slope_condition}
\limsup_{(x,y)\to(\hat x,\hat y)}
\frac{\phi_\ga(\hat x,\hat y)-\phi_\ga(x,y)}
{\norm{(x,y)-(\hat x,\hat y)}}<\frac{2\eps}\de.
\end{align}

By \eqref{phi} and \eqref{eq:intermediate_estimate:main_result}, we find $\varphi(\hat x)\le \varphi(\bx)$, and due to
\eqref{eq:main_result_local_slope_condition}, there is a number
$\hat\eps\in(0,\frac{2\eps}\de)$ such that
\begin{align*}
	\liminf\limits_{(x,y)\to(\hat x,\hat y)}
	\frac{\phi_\gamma(x,y)+\hat\eps\norm{(x,y)-(\hat x,\hat y)}-\phi_{\gamma}(\hat x,\hat y)}
	{\norm{(x,y)-(\hat x,\hat y)}}
	\geq 0.
\end{align*}
Set $\xi':=2{\eps}/\de-{\hat\eps}>0$.
By definition of the Fr\'{e}chet subdifferential,
the above inequality yields
\begin{align}\label{eq:main_result_Fermat_rule}
(0,0)\in{\partial} \left(\phi_\ga+\hat\eps\norm{(\cdot,\cdot)}\right)
(\hat x,\hat y).
\end{align}

Condition \eqref{eq:main_result_Fermat_rule} can be rewritten as
$(0,0)\in{\partial}\left(\varphi+\ga g+h\right)(\hat x,\hat y)$, where the functions
$g,h\colon X\times Y\to\R$ are given by
\begin{align*}
\forall (x,y)\in X\times Y\colon\quad
g(x,y)&:=\dist_{\gph\Phi}(x,y),\\
h(x,y)&:=\ga\|y-\bar y\|+\hat\eps\|(x,y)-(\hat x,\hat y)\|.
\end{align*}
Note that $g$ and $h$ are Lipschitz continuous, and $h$ is convex.
We distinguish two cases.

\underline{Case 1: Let $X$ and $Y$ be Asplund spaces.}
Let us recall the estimates $\norm{\hat x-\bar x}<\de'<\delta$, $\norm{\hat y-\bar y}<\eta'/4<\eta/4$,
$\dist_{\gph\Phi}(\hat x,\hat y)<\eta'/4<\eta/4$, and $\varphi(\hat x)\leq\varphi(\bar x)$.
By the fuzzy sum rule, see \cref{lem:SR}\,\ref{item:fuzzy_sum_rule},
there exist points
$(x_1,y_1),(u_2,v_2)\in X\times Y$
arbitrarily close to $(\hat x,\hat y)$
with $\varphi(x_1)$ arbitrarily close to $\varphi(\hat x)$,
so that
\begin{gather*}
\norm{x_1-\bx}<\de,\quad
\norm{u_2-\bx}<\de',\quad
\varphi(x_1)<\varphi(\bx)+\eta,\quad
\norm{y_1-\by}<\frac{\eta}2,
\\
\norm{v_2-\by}<\frac{\eta'}2,\quad
\norm{u_2-x_1}<\frac{\eta'}2,\quad
\dist_{\gph\Phi}(x_1,y_1)<\frac{\eta}2,\quad
\dist_{\gph\Phi}(u_2,v_2)<\frac{\eta'}4,
\end{gather*}
and subgradients $x_{1}^*\in{\partial}\varphi(x_{1})$
and $(u_{2}^*,v_{2}^*)\in{\partial}g(u_{2},v_{2})$ satisfying
$$\norm{x_{1}^*+\ga u_{2}^*}<\hat\eps+\frac{\xi'}2.$$
Thus, $x_1\in B_\de(\bx)$
and
$\dist_{\gph\Phi}(x_1,\by)<\dist_{\gph\Phi}(x_1,y_1) +\norm{y_1-\by}<\eta$.
In view of
\cref{lem:subdifferential_distance_function}\,\ref{item:sdf_distance_function_out_of_set_points},
there exist
$(x_{2},y_{2})\in\gph\Phi$ and
$(u_{2}^{*\prime},v_{2}^{*\prime})\in { N}_{\gph\Phi}(x_{2},y_{2})$ such that
\begin{gather*}
\norm{(x_2,y_2)-(u_2,v_2)}
<\dist_{\gph\Phi}(u_2,v_2)+\frac{\eta'}{4}<\frac{\eta'}2,\quad
\norm{u_{2}^{*\prime}-u_{2}^*}<\frac{\xi'}{2\ga}.
\end{gather*}
Set $x_2^*:=\ga u_{2}^{*\prime}$ and $y^*:=-\ga v_{2}^{*\prime}$.
Thus, $x_2^*\in D^*\Phi(x_2,y_2)(y^*)$, and we have
\begin{align*}
\norm{y_2-\by}\le&\norm{v_2-\by}+\norm{y_2-v_2}<\eta'<\eta,
\\
\norm{x_2-\bx}\le&\norm{u_2-\bx}+\norm{x_2-u_2}
<\de'+\frac{\eta'}{2}<\de,
\\
\norm{x_2-x_1}\le&\norm{x_2-u_2}+\norm{u_2-x_1}<\eta'<\eta,
\\
\norm{x_1^*+x_2^{*}}
\leq&	\norm{x_1^*+\gamma\,u_2^*}+\gamma\norm{u_2^{*\prime}-u_2^*}
<\hat\eps+{\xi'}=\frac{2\eps}\de.
\end{align*}

\underline{Case 2:
Let $\varphi$ and $\gph\Phi$ be convex.}
We have $\hat x\in B_{\de'}(\bx)\subset B_\de(\bx)$,
$\varphi(\hat x)\le \varphi(\bx)$,
$\norm{\hat y-\bar y}<\eta'/4$,
and
$\dist_{\gph\Phi}(\hat x,\hat y)<\eta'/4<\eta$.
By the convex sum rule, see \cref{lem:SR}\,\ref{item:convex_sum_rule},
there exist subgradients
$x_{1}^*\in{\partial}\varphi(\hat x)$
and $(u_{2}^*,v_{2}^*)\in{\partial}g(\hat x,\hat y)$ satisfying
\begin{align*}
	\norm{x_{1}^*+\ga u_{2}^*}\le\hat\eps.
\end{align*}
In view of
\cref{lem:subdifferential_distance_function}\,\ref{item:sdf_distance_function_out_of_set_points},
there exist
$(x_{2},y_{2})\in\gph\Phi$ and
$(u_{2}^{*\prime},v_{2}^{*\prime})\in { N}_{\gph\Phi}(x_{2},y_{2})$ such that
\begin{align*}
\norm{(x_2,y_2)-(\hat x,\hat y)}
<\dist_{\gph\Phi}(\hat x,\hat y)+\frac{\eta'}{4},\quad
	\norm{u_{2}^{*\prime}-u_{2}^*}
<\frac{\xi'}{\ga}.
\end{align*}
Set $x_1:=\hat x$, $x_2^*:=\ga u_{2}^{*\prime}$, and $y^*:=-\ga v_{2}^{*\prime}$.
Thus, $x_1\in B_\de(\bx)$,
$\dist_{\gph\Phi}(x_1,\by)<\eta'/2<\eta$,
$\varphi(x_1)\leq \varphi(\bar x)$, and
$x_2^*\in D^*\Phi(x_2,y_2)(y^*)$.
Replacing $(u_2,v_2)$ with $(\hat x,\hat y)$ in the corresponding estimates established in Case 1, we obtain
\begin{align*}
\norm{y_2-\by}<\eta,\quad
\norm{x_2-\bx}<\de,\quad
\norm{x_2-x_1}<\eta,
\\
\norm{x_1^*+x_2^{*}}
\leq\norm{x_1^*+\gamma\,u_2^*}
+\gamma\norm{u_2^{*\prime}-u_2^*}
<\hat\eps+\xi'=\frac{2\eps}\de.
\end{align*}
This completes the proof.
\end{proof}

Clearly, \cref{thm:main_result} provides dual necessary conditions for
$\eps$-minimality of a feasible point of problem \eqref{eq:basic_problem}
under some additional structural assumptions on the data which are almost for free in the finite-dimensional
setting, see \cref{cor:lower_semicontinuity_svm_via_weak_sequential_lower_semicontinuity},
and of reasonable strength in the infinite-dimensional one.
In the subsequent remark, we comment on additional primal and dual conditions for $\eps$-minimality
which can be distilled from the proof of \cref{thm:main_result}.
\begin{remark}
\label{rem:primal_conditions}
\begin{enumerate}
\item\label{item:primal_conditions_in_metric_spaces}
In the proof of \cref{thm:main_result}, more sets of necessary conditions for local
$\eps$-minimality of a feasible point of problem \eqref{eq:basic_problem}
have been established along the way.
Moreover, the first part of the proof does not use the linear structure of the spaces,
i.e., the arguments work in the setting of general complete metric spaces $X$ and $Y$.
The conditions can be of independent interest and are listed below.
We assume that $X$ and $Y$ are complete metric spaces and all the other assumptions of \cref{thm:main_result} are satisfied,
except condition~\ref{item:Aplund_or_convexity}.
\medskip

\textbf{Necessary conditions for local $\eps$-minimality.}
There is a $\de_0\in(0,\de)$ such that,
for each $\de'\in(\de_0,\de)$ and $\eta>0$,
there exist points $\hat x\in B_{\de'}(\bx)$ and
$\hat y\in B_\eta(\by)$ satisfying
$\dist_{\gph\Phi}(\hat x,\hat y)<\eta$,
and numbers $\ga>0$ and $\ga_1>0$
such that,
with the function
$\phi_\gamma\colon X\times Y\to\R_\infty$
given by
\begin{align*}
\forall (x,y)\in X\times Y\colon\quad
\phi_\ga(x,y):=
\varphi(x)+\ga\bigl(d(y,\bar y)+\dist_{\gph\Phi}(x,y)\bigr),
\end{align*}
the following conditions hold:
\begin{itemize}
\item
$\phi_\ga(\hat x,\hat y)+\ga_1 d(\hat x,\bar x)^2\le \varphi(\bx)$,
and

\item
\underline{primal nonlocal condition \textup{\textbf{(PNLC)}}}:
$\sup\limits_{\substack{(x,y)\ne(\hat x,\hat y)\\
x\in B_{\de'}(\bar x)}}
		\dfrac{\phi_\ga(\hat x,\hat y)-\phi_\ga(x,y)}{d((x,y),(\hat x,\hat y))}
	<
	\dfrac{2\eps}\de$,

\item
\underline{primal local condition \textup{\textbf{(PLC)}}}:
$\limsup\limits_{(x,y)\to(\hat x,\hat y)}
\dfrac{\phi_\ga(\hat x,\hat y)-\phi_\ga(x,y)}
{d((x,y),(\hat x,\hat y))}<\dfrac{2\eps}\de$,

\item
\underline{dual condition \textup{\textbf{(DC)}}} ($X$ and $Y$ are Banach spaces):
condition \eqref{eq:main_result_Fermat_rule} is satisfied with some $\hat\eps\in(0,\frac{2\eps}\de)$.
\end{itemize}

The relationship between the conditions is as follows:
\textup{\textbf{(PNLC)}}
$\Rightarrow$
\textup{\textbf{(PLC)}}
$\Rightarrow$
\textup{\textbf{(DC)}}.
The dual conditions in \cref{thm:main_result} are consequences of the above conditions.

Let us note that the left-hand side\ in \textup{\textbf{(PNLC)}} is the \emph{nonlocal slope}, see \cite{FabianHenrionKrugerOutrata2010},
of the function $\phi_\ga+i_{B_{\de'}(\bx)}$ at $(\hat x,\hat y)$,
while the left-hand side\ in \textup{\textbf{(PLC)}} is the conventional slope of $\phi_\ga$ at $(\hat x,\hat y)$.

\item
Since the function $\varphi$ in \cref{thm:main_result} is assumed to be lower semicontinuous,
it is automatically bounded from below on \emph{some} neighborhood of $\bx$.
We emphasize that \cref{thm:main_result} requires all the conditions to hold on the same
set $U$ containing a neighborhood of $\bx$.
\end{enumerate}
\end{remark}

As a consequence of \cref{thm:main_result}, we obtain necessary conditions
characterizing local minimizers of \eqref{eq:basic_problem}.

\begin{corollary}\label{cor:necessary_conditions_local_minimizer}
Let $X$ and $Y$ be Banach spaces,
$\varphi\colon X\to\R_\infty$
lower semicontinuous, $\Phi\colon X\rightrightarrows Y$ have closed graph, $\bar y\in Y$, and
$\bx\in\dom \varphi\cap\Phi^{-1}(\by)$.
Assume that
\begin{enumerate}
\item
the function $\varphi$ is lower semicontinuous near $\bx$ relative to $\Phi$ at $\by$;
\item
either $X$ and $Y$ are Asplund, or
$\varphi$ and $\gph\Phi$ are convex.
\end{enumerate}
Suppose that
$\bx$ is a local minimizer
of \eqref{eq:basic_problem}.
Then,
for each $\eps>0$,
there exist points
$x_1,x_2\in B_\eps(\bx)$ and
$y_2\in\Phi(x_2)\cap B_{\eps}(\by)$ such that
$|\varphi(x_1)-\varphi(\bx)|<\eps$ and
\[
	0\in{\sd}\varphi (x_1)+\Im D^*\Phi(x_2,y_2)+\eps\B^*.
\]
Moreover, if $\varphi$ and $\gph\Phi$ are convex, then
$\varphi(x_1)\le \varphi(\bx)$.
\end{corollary}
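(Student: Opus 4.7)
The plan is to reduce this corollary to \cref{thm:main_result} by specializing its parameters to the local situation around $\bar x$. Fix an arbitrary $\eps>0$. Three consequences of the hypotheses will control how small I must shrink the working neighborhood of $\bar x$. First, local minimality of $\bar x$ for \eqref{eq:basic_problem} supplies a radius $\delta_1>0$ with $\varphi(\bar x)\le\varphi(x)$ for every $x\in B_{\delta_1}(\bar x)\cap\Phi^{-1}(\bar y)$. Second, the ambient hypothesis that $\varphi$ is lower semicontinuous, specialized at $\bar x$, yields a radius $\delta_2>0$ such that $\varphi(x)>\varphi(\bar x)-\eps$ on $B_{\delta_2}(\bar x)$; this simultaneously delivers the lower bound on $\varphi(x_1)$ that I will need at the end and a fortiori the boundedness from below of $\varphi$ on this ball. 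Third, lower semicontinuity of $\varphi$ near $\bar x$ relative to $\Phi$ at $\bar y$, in the sense of \cref{def:lower_semicontinuity_relative_to_svm}\,\ref{item:def_lower_semicontinuity_near_point}, furnishes a radius $\delta_3>0$ such that $\varphi$ is lower semicontinuous on $\overline{B}_\delta(\bar x)$ relative to $\Phi$ at $\bar y$ for every $\delta\in(0,\delta_3)$.

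I next pick any $\delta\in(0,\min\{\delta_1,\delta_2,\delta_3,\eps\})$ and set $U:=B_\delta(\bar x)$, so that $B_\delta(\bar x)\subset U$ is automatic. Since $\bar x$ is a \emph{bona fide} minimizer of $\varphi_{\Phi^{-1}(\bar y)}$ on $U$, it is an $\eps'$-minimal point of \eqref{eq:basic_problem} on $U$ for every $\eps'>0$. The key quantitative choice is $\eps':=\eps\delta/2$, which makes the error ball $\frac{2\eps'}{\delta}\B^*$ from the conclusion of \cref{thm:main_result} coincide exactly with $\eps\B^*$; to control the remaining distance tolerances I take $\eta:=\eps$. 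At this stage every standing assumption of \cref{thm:main_result} (lower semicontinuity of $\varphi$, closed graph of $\Phi$, the Asplund-or-convex dichotomy from assumption (b), boundedness from below of $\varphi$ on $U$, and relative lower semicontinuity of $\varphi$ on $U$) is in force.

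Applying \cref{thm:main_result} then produces points $x_1,x_2\in B_\delta(\bar x)\subset B_\eps(\bar x)$ and $y_2\in\Phi(x_2)\cap B_\eta(\bar y)\subset\Phi(x_2)\cap B_\eps(\bar y)$ satisfying $\varphi(x_1)<\varphi(\bar x)+\eta=\varphi(\bar x)+\eps$ together with $0\in\partial\varphi(x_1)+\Im D^*\Phi(x_2,y_2)+\eps\B^*$. Because $x_1\in B_\delta(\bar x)\subset B_{\delta_2}(\bar x)$, the lower estimate from step one yields $\varphi(x_1)>\varphi(\bar x)-\eps$, upgrading the upper bound to the required two-sided estimate $|\varphi(x_1)-\varphi(\bar x)|<\eps$. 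In the convex case the sharper conclusion $\varphi(x_1)\le\varphi(\bar x)$ is inherited verbatim from \cref{thm:main_result}. The only delicate point in the argument is bookkeeping: $\delta$ must serve three independent roles (domain of local minimality, domain of a useful LSC lower bound at $\bar x$, and domain on which relative lower semicontinuity is available) while simultaneously satisfying $2\eps'/\delta=\eps$. Since we are free to scale $\eps'$ after $\delta$ has been chosen, this coupling introduces no genuine obstruction.
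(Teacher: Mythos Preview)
Your proof is correct and follows essentially the same route as the paper's, which likewise shrinks $\delta$ to serve all three roles at once, feeds \cref{thm:main_result} the $\eps$-parameter $\eps\delta/2$ so that $2\eps'/\delta=\eps$, and obtains the lower bound $\varphi(x_1)>\varphi(\bar x)-\eps$ from ordinary lower semicontinuity of $\varphi$ at $\bar x$. One small bookkeeping point to tighten: \cref{def:lower_semicontinuity_relative_to_svm}\,\ref{item:def_lower_semicontinuity_near_point} supplies relative lower semicontinuity on the \emph{closed} balls $\overline{B}_\delta(\bar x)$, not on the open ones (and the paper notes after that definition that the property need not pass to subsets), so take $U:=\overline{B}_\delta(\bar x)$, which still satisfies $B_\delta(\bar x)\subset U$, rather than the open ball.
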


\begin{proof}
Let a number $\eps>0$ be arbitrarily chosen.
Set $\eps':=\eps/2$.
By the assumptions and \cref{rem:minimality_vs_stationarity},
there exists a $\de\in(0,\varepsilon)$ such that
on $U:=\overline{B}_{\de}(\bx)$ the function $\varphi$ is bounded from below and
lower semicontinuous relative to $\Phi$ at $\by$,
and $\bx$ is an $\eps'\de$-minimal point of $\varphi_{\Phi^{-1}(\by)}$ on $U$.
Thus, all the assumptions of \cref{thm:main_result} are satisfied.
Moreover, $2(\eps'\de)/\de=\eps$ and, since $\varphi$ is lower semicontinuous,
one can ensure that $\varphi(x_1)>\varphi(\bx)+\eps$.
Hence, taking any $\eta\in(0,\eps)$, the assertion follows from \cref{thm:main_result}.
\end{proof}

In the subsequent remark, we comment on the findings in \cref{cor:necessary_conditions_local_minimizer}.
\begin{remark}\label{rem:asymptotic_stationarity_local_minimizers}
\begin{enumerate}
	\item The analogues of the necessary conditions in \cref{rem:primal_conditions}\,\ref{item:primal_conditions_in_metric_spaces}
		are valid in the setting of \cref{cor:necessary_conditions_local_minimizer}, too.
		More precisely, it suffices to replace $\frac{2\eps}\de$ with just $\eps$ in the involved conditions.
	\item The necessary conditions in \cref{cor:necessary_conditions_local_minimizer} hold for each stationary point
		(not necessarily a local minimizer) of problem \eqref{eq:basic_problem}.
\end{enumerate}
\end{remark}

We now consider an important particular case of problem \eqref{eq:basic_problem}, namely
\begin{equation}\label{eq:constrained_problem}\tag{$\widetilde{\text{P}}$}
	\min\{\varphi(x)\,|\,x\in\Omega\},
\end{equation}
where $\Omega\subset X$ is a nonempty
subset of a Banach space.
To obtain this setting from the
one in \eqref{eq:basic_problem},
it suffices to consider
the set-valued mapping $\Phi\colon X\rightrightarrows Y$ whose graph is given by
$\gph\Phi:=\Omega\times Y$.
Here, $Y$ can be an arbitrary Asplund space, e.g., one can take $Y:=\R$.
Observe that $\Phi^{-1}(y)=\Omega$ holds for all $y\in Y$.
Hence, by \cref{def:upper_semicontinuity}, for all $y\in Y$, the mapping $\Phi^{-1}$ is upper semicontinuous at $y$.
Moreover,
$N_{\gph\Phi}(x,y)=N_\Omega(x)\times\{0\}$.
Thus, the next statement is a consequence of \cref{prop:decomposition_lsc} and \cref{thm:main_result}.

\begin{theorem}\label{thm:main_result_wrt_set}
Let $X$ be a Banach space,
$\varphi\colon X\to\R_\infty$ lower semicontinuous,
$\Omega\subset X$ a closed set,
and fix $\bx\in\dom \varphi\cap\Omega$, $U\subset X$, $\eps>0$, and $\de>0$.
Assume that ${B}_{\de}(\bx)\subset U$, and
\begin{enumerate}
\item
on $U$, the function
$\varphi$ is bounded from below and
lower semicontinuous relative to $\Omega$;
\item
either $X$ is Asplund, or
$\varphi$ and $\Omega$ are convex.
\end{enumerate}
Suppose that
$\bx$ is an $\eps$-minimal point of problem \eqref{eq:constrained_problem} on $U$.
Then, for each $\eta>0$, there exist points
$x_1\in B_\de(\bx)$
and
$x_2\in\Omega\cap B_{\de}(\bx)$ such that $\norm{x_2-x_1}<\eta$,
$\varphi(x_1)<\varphi(\bx)+\eta$,
and
\[
	0\in{\sd}\varphi(x_1)+ N_\Omega(x_2)+\frac{2\eps}\de\B^*.
\]
Moreover, if $\varphi$ and $\Omega$ are convex, then
$\varphi(x_1)\le \varphi(\bx)$.
\end{theorem}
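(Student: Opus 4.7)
The plan is to reduce \cref{thm:main_result_wrt_set} to \cref{thm:main_result} by embedding the set constraint $x\in\Omega$ as a generalized equation. Specifically, I fix an auxiliary Asplund space $Y$ (the safest choice being $Y:=\R$) and introduce $\Phi\colon X\tto Y$ whose graph is $\Omega\times Y$, so that $\Phi^{-1}(y)=\Omega$ for every $y\in Y$. Selecting any $\by\in Y$, problem \eqref{eq:constrained_problem} becomes an instance of \eqref{eq:basic_problem} with this $\Phi$ and $\by$, and $\bx\in\Phi^{-1}(\by)$. Closedness of $\Omega$ transfers to closedness of $\gph\Phi$, and convexity of $\Omega$ yields convexity of $\gph\Phi$, so hypothesis~\ref{item:Aplund_or_convexity} of \cref{thm:main_result} is inherited in either branch.

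The second step is to verify the semicontinuity hypothesis. For the $U$ given in the statement, the mapping $y\mapsto \Phi^{-1}(y)\cap U$ is constant with value $\Omega\cap U$, hence trivially upper semicontinuous at $\by$ in the sense of \cref{def:upper_semicontinuity}. Since $\Phi^{-1}(\by)=\Omega$, the notion of lower semicontinuity on $U$ relative to $\Omega$ from \cref{def:lower_semicontinuity_relative_to_set} coincides with lower semicontinuity on $U$ relative to $\Phi^{-1}(\by)$. \Cref{prop:decomposition_lsc}\,\ref{item:lower_semicontinuity_via_decomposition} then yields that $\varphi$ is lower semicontinuous on $U$ relative to $\Phi$ at $\by$; boundedness from below on $U$ carries over unchanged.

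All hypotheses of \cref{thm:main_result} now being in force, I would apply it to extract $x_1,x_2\in B_\de(\bx)$ and $y_2\in\Phi(x_2)\cap B_\eta(\by)$ with $\|x_2-x_1\|<\eta$, $\varphi(x_1)<\varphi(\bx)+\eta$, and
\[
    0\in\partial\varphi(x_1)+\Im D^*\Phi(x_2,y_2)+\tfrac{2\eps}{\de}\B^*.
\]
The product structure $\gph\Phi=\Omega\times Y$ gives $N_{\gph\Phi}(x_2,y_2)=N_\Omega(x_2)\times\{0\}$, so $(x^*,-y^*)\in N_{\gph\Phi}(x_2,y_2)$ forces $y^*=0$ and $x^*\in N_\Omega(x_2)$, whence $\Im D^*\Phi(x_2,y_2)=N_\Omega(x_2)$. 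The membership $y_2\in\Phi(x_2)$ forces $x_2\in\Omega$, yielding $x_2\in\Omega\cap B_\de(\bx)$, and the remaining information about $y_2$ and $\dist_{\gph\Phi}(x_1,\by)$ plays no role in the conclusion and is discarded. The convex addendum $\varphi(x_1)\le\varphi(\bx)$ is inherited verbatim from \cref{thm:main_result}.

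No substantive obstacle arises here; the only care point is to keep the two branches of hypothesis~\ref{item:Aplund_or_convexity} compatible with the auxiliary space $Y$. Taking $Y:=\R$ handles the Asplund branch automatically, and $\Omega\times\R$ is convex whenever $\Omega$ is, so the convex branch transfers as well. Once this bookkeeping is in place the conclusion is immediate from the specialization.
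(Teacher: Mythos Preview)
Your proposal is correct and follows essentially the same approach as the paper: embed the set constraint via $\gph\Phi:=\Omega\times Y$ with $Y:=\R$, invoke \cref{prop:decomposition_lsc}\,\ref{item:lower_semicontinuity_via_decomposition} using the trivial upper semicontinuity of the constant mapping $y\mapsto\Omega\cap U$ to upgrade the relative-to-$\Omega$ hypothesis to the relative-to-$\Phi$ one, and then specialize \cref{thm:main_result} via $N_{\gph\Phi}(x_2,y_2)=N_\Omega(x_2)\times\{0\}$. The bookkeeping you flag about keeping the Asplund and convex branches compatible with the auxiliary space $Y=\R$ is exactly the care the paper takes as well.
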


The next corollary follows immediately.

\begin{corollary}\label{cor:consequence_for_set_constrained_problems}
Let $X$ be a Banach space,
$\varphi\colon X\to\R_\infty$ lower semicontinuous,
$\Omega\subset X$ a closed set,
and
$\bx\in\dom \varphi\cap\Omega$.
Assume that
\begin{enumerate}
\item
the function
$\varphi$ is
lower semicontinuous near $\bx$ relative to $\Omega$;
\item
either $X$ is Asplund, or
$\varphi$ and $\Omega$ are convex.
\end{enumerate}
Suppose that
$\bx$ is a local minimizer of \eqref{eq:constrained_problem}.
Then, for each $\varepsilon>0$, there exist
$x_1\in B_\eps(\bx)$ and
$x_2\in\Omega\cap B_{\eps}(\bx)$ such that
$|\varphi(x_1)-\varphi(\bx)|<\eps$ and
\[
	0\in{\sd}\varphi (x_1)+ N_\Omega(x_2)+\eps\B^*.
\]
Moreover, if $\varphi$ and $\Omega$ are convex, then
$\varphi(x_1)\le \varphi(\bx)$.
\end{corollary}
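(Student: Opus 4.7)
The plan is to obtain this corollary as an essentially immediate specialization of \cref{thm:main_result_wrt_set}, following exactly the template already used to derive \cref{cor:necessary_conditions_local_minimizer} from \cref{thm:main_result}. Fix $\varepsilon>0$ and set $\varepsilon':=\varepsilon/2$. I must choose a radius $\delta\in(0,\varepsilon)$ and a set $U:=\overline{B}_\delta(\bx)$ on which all hypotheses of \cref{thm:main_result_wrt_set} are in force, and on which $\bx$ is an $\varepsilon'\delta$-minimal point of problem \eqref{eq:constrained_problem}.

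The existence of such a $\delta$ relies on three ingredients. First, lower semicontinuity of $\varphi$ at $\bx$ ensures that $\varphi$ is bounded from below on a sufficiently small closed ball around $\bx$. Second, by assumption $\varphi$ is lower semicontinuous near $\bx$ relative to $\Omega$, so by \cref{def:lower_semicontinuity_relative_to_set}\,\ref{item:def_lower_semicontinuity_relative_to_set_near_point} we can shrink $\delta$ further to make sure lower semicontinuity relative to $\Omega$ holds on $U$. Third, since $\bx$ is a local minimizer of \eqref{eq:constrained_problem}, it is in particular a stationary point of $\varphi_\Omega$, and \cref{rem:minimality_vs_stationarity} then guarantees that, for any $\varepsilon'>0$ and all sufficiently small $\delta$, $\bx$ is an $\varepsilon'\delta$-minimal point of $\varphi_\Omega$ on $\overline{B}_\delta(\bx)$. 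Choosing $\delta\in(0,\varepsilon)$ small enough so that all three properties hold simultaneously sets the stage.

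Now I apply \cref{thm:main_result_wrt_set} with this $\delta$, this $U$, and any $\eta\in(0,\varepsilon)$. The theorem produces $x_1\in B_\delta(\bx)\subset B_\varepsilon(\bx)$ and $x_2\in\Omega\cap B_\delta(\bx)\subset B_\varepsilon(\bx)$ with $\|x_2-x_1\|<\eta$, $\varphi(x_1)<\varphi(\bx)+\eta\le\varphi(\bx)+\varepsilon$, and
\[
0\in\partial\varphi(x_1)+N_\Omega(x_2)+\frac{2\varepsilon'\delta}{\delta}\B^{*}
=\partial\varphi(x_1)+N_\Omega(x_2)+\varepsilon\B^{*},
\]
since $2\varepsilon'=\varepsilon$. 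For the complementary one-sided estimate $\varphi(x_1)>\varphi(\bx)-\varepsilon$, I shrink $\delta$ once more using lower semicontinuity of $\varphi$ at $\bx$ so that $\varphi(x)>\varphi(\bx)-\varepsilon$ for every $x\in B_\delta(\bx)$; together with the previous inequality this yields $|\varphi(x_1)-\varphi(\bx)|<\varepsilon$. In the convex case, \cref{thm:main_result_wrt_set} directly delivers $\varphi(x_1)\le\varphi(\bx)$, so this refinement is inherited automatically.

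There is no real obstacle here: the only point requiring care is bookkeeping, namely ensuring that the single radius $\delta$ simultaneously validates boundedness from below, lower semicontinuity relative to $\Omega$, $\varepsilon'\delta$-minimality, and the one-sided continuity estimate used to bound $\varphi(x_1)$ from below. Each of these is a ``for all sufficiently small $\delta$'' condition, so taking the minimum of the four thresholds closes the argument.
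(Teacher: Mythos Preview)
Your proof is correct and follows exactly the approach the paper intends: the paper states that this corollary ``follows immediately'' from \cref{thm:main_result_wrt_set}, and your derivation replicates the template used in the proof of \cref{cor:necessary_conditions_local_minimizer}, specializing it to the set-constrained setting. The bookkeeping you flag (choosing a single $\delta$ small enough to satisfy all four requirements simultaneously) is the only thing to track, and you handle it appropriately.
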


Whenever $\varphi$ is Lipschitz continuous around $\bar x$, the assertion of
\cref{cor:consequence_for_set_constrained_problems} is an immediate consequence
of Fermat's rule and the sum
rules stated in \cref{lem:SR}.
We note that \cref{cor:consequence_for_set_constrained_problems}
is applicable in more general situations, exemplary, if $\varphi$ is only
uniformly continuous in a neighborhood of the investigated local minimizer,
see \cref{cor:sufficient_criteria_lower_semicontinuity_relative_to_a_set},
or if $X$ is finite-dimensional, see \cref{cor:lower_semicontinuity_near_point_set_via_weak_sequential_lower_semicontinuity}.

Note that the dual necessary optimality conditions in \cref{cor:necessary_conditions_local_minimizer,cor:consequence_for_set_constrained_problems}
do not hold at the reference point
but at some other points arbitrarily close to it.
Such conditions describe certain properties of admissible points which can be interpreted as a kind of dual approximate stationarity.
The precise meaning of approximate stationarity will be discussed in \cref{sec:asymptotic_stuff} in the setting of
geometrically-constrained optimization problems.

\section{Generalized separation and extremal principle}\label{sec:generalized_separation}

Below, we discuss certain generalized extremality and separation properties
of a collection of closed subsets $\Omega_1,\ldots,\Omega_n$ of a Banach space $X$, having a common point $\bx\in\bigcap_{i=1}^n\Omega_i$.
Here, $n$ is an integer satisfying $n\geq 2$.
We write $\{\Omega_1,\ldots,\Omega_n\}$ to denote the collection of sets as a single object.

We begin with deriving necessary conditions for so-called
$\mathcal{F}$-extremality of a collection of sets.
The property in the definition below is determined by a nonempty family $\mathcal{F}$ of nonnegative lower semicontinuous functions
$f\colon X^{n}\to\R_\infty$ and
mimics the corresponding conventional one, see e.g.\ \cite{KruMor80}.

\begin{definition}\label{def:extremality}
Let a family $\mathcal{F}$ described above be given.
Suppose that, for each $f\in\mathcal{F}$, the function
$\hat f\colon X^{n}\to\R_\infty$ is defined by
\begin{equation}\label{eq:hatf}
\forall z:=(x_1,\ldots,x_n)\in X^{n}\colon\quad
\hat f(z):=f(x_1-x_n,\ldots,x_{n-1}-x_n,x_n).
\end{equation}
The collection $\{\Omega_1,\ldots,\Omega_n\}$ is \emph{$\mathcal{F}$-extremal at $\bx$} if,
for each $\eps>0$, there exist a function $f\in\mathcal{F}$ and a number $\rho>0$
such that $f(0,\ldots,0,\bx)<\eps$ and
\begin{equation}\label{eq:extremality_nonnegativity}
\forall
x_i\in\Omega_i+\rho\B\; (i=1,\ldots,n)\colon\quad
\hat f(x_1,\ldots,x_n)>0.
\end{equation}
\end{definition}

The following theorem, which is based on \cref{thm:main_result_wrt_set}, provides
a general necessary condition for $\mathcal F$-extremality.
\begin{theorem}
\label{thm:necessary_condition_extremality}
Assume that
\begin{enumerate}
\item
there is a neighborhood $U$ of $\bx$ such that,
for each $f\in\mathcal{F}$, the function
$\hat f\colon X^{n}\to\R_\infty$ defined by
\eqref{eq:hatf} is lower semicontinuous on $U^n$ relative to $\Omega:=\Omega_1\times\ldots\times\Omega_n$;
\item
either $X$ is Asplund, or
$\Omega_1,\ldots,\Omega_n$ and all $f\in\mathcal{F}$ are convex.
\end{enumerate}
Suppose that
the collection $\{\Omega_1,\ldots,\Omega_n\}$ is
$\mathcal{F}$-extremal at $\bx$.
Then,
for each $\eps>0$ and $\eta>0$, there exist a function $f\in\mathcal{F}$ with $f(0,\ldots,0,\bx)<\eps$ and points
$x_i\in\Omega_i\cap B_{\eps}(\bx)$, $x'_i\in B_{\eta}(x_i)$, and $x_i^*\in X^*$ $(i=1,\ldots,n)$ such that
\begin{subequations}\label{eq:NC_extremality}
	\begin{align}
	\label{eq:NC_extremality_normals_close_to_cone}
	\sum_{i=1}^{n} \dist_{N_{\Omega_i}(x_i)}\left(x_i^*\right) <\eps,
	\\
	\label{eq:NC_extremality_bounds_function_value}
	0<f(w)<f(0,\ldots,0,\bx)+\eta,
	\\
	\label{eq:NC_extremality_subdifferential}
	-\left(x_{1}^*,\ldots,x_{n-1}^*,\sum_{i=1}^{n}x_i^*\right)\in\sd f(w),
\end{align}
\end{subequations}
where $w:=(x'_1-x'_n,\ldots,x'_{n-1}-x'_n,x'_n)\in X^{n}$.
Moreover, if $f$ and $\Omega_1,\ldots,\Omega_n$ are convex, then
$f(w)\le f(0,\ldots,0,\bx)$.
\end{theorem}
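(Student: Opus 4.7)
The plan is to apply \cref{thm:main_result_wrt_set} to the function $\hat f$ and the product set $\Omega := \Omega_1 \times \cdots \times \Omega_n$ at the point $\bz := (\bx, \ldots, \bx) \in X^n$, and then translate the resulting inclusion back to $f$ via the chain rule associated with the linear change of variables defining $\hat f$. The $\mathcal F$-extremality condition is tailored for this reduction: for every tolerance $\eps_0 > 0$, I can pick $f \in \mathcal F$ and $\rho > 0$ such that $\hat f > 0$ on $(\Omega_1 + \rho\B) \times \cdots \times (\Omega_n + \rho\B)$ and $\hat f(\bz) = f(0, \ldots, 0, \bx) < \eps_0$. In particular $\hat f \ge 0$ on $\Omega$, so $\bz$ is an $\eps_0$-minimal point of the constrained problem $\min\{\hat f(z) \,|\, z \in \Omega\}$ restricted to the neighbourhood $U^n$ of $\bz$ supplied by assumption~(a); there $\hat f$ is bounded below (being globally nonnegative) and lower semicontinuous relative to $\Omega$, and assumption~(b) transfers from $(f,\Omega_1,\ldots,\Omega_n)$ to $(\hat f,\Omega)$ since $X^n$ inherits the Asplund property and convexity is preserved by the linear map defining $\hat f$.

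Given the target $\eps,\eta > 0$, I would first choose $\delta \in (0,\eps)$ with $B_\delta(\bz) \subset U^n$, then $\eps_0 \in (0, \eps\delta/2)$ so that $2\eps_0/\delta < \eps$, and then invoke the extremality to obtain the associated $f$ and $\rho$. Applying \cref{thm:main_result_wrt_set} with auxiliary tolerance $\eta_0 \in (0,\min(\eta,\rho))$ produces points $z_1 := (x'_1,\ldots,x'_n) \in B_\delta(\bz)$ and $z_2 := (x_1,\ldots,x_n) \in \Omega \cap B_\delta(\bz)$ with $\norm{z_2 - z_1} < \eta_0$, $\hat f(z_1) < \hat f(\bz) + \eta_0$, and
\begin{equation*}
	0 \in \sd \hat f(z_1) + N_\Omega(z_2) + \tfrac{2\eps_0}{\delta}\B^*.
\end{equation*}
Because products carry the max norm, each $x_i$ lies in $\Omega_i \cap B_\eps(\bx)$ and each $x'_i$ in $B_\eta(x_i)$; moreover $x'_i \in \Omega_i + \eta_0\B \subset \Omega_i + \rho\B$, so $\mathcal F$-extremality forces $f(w) = \hat f(z_1) > 0$, while $\hat f(z_1) < \hat f(\bz) + \eta_0 < f(0,\ldots,0,\bx) + \eta$ gives the upper bound on $f(w)$ in \eqref{eq:NC_extremality_bounds_function_value}.

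To decode the above inclusion in terms of $f$, I would use the linear isomorphism $A \colon X^n \to X^n$ given by $A(u_1,\ldots,u_n) := (u_1 - u_n,\ldots,u_{n-1} - u_n, u_n)$, whose adjoint acts as $A^*(\xi_1^*,\ldots,\xi_n^*) = (\xi_1^*,\ldots,\xi_{n-1}^*,\,\xi_n^* - \sum_{i=1}^{n-1}\xi_i^*)$. A direct verification from the definition of the Fr\'echet subdifferential yields $\sd \hat f(z_1) \subset A^*\sd f(w)$ for $w := Az_1$. Combining this with $N_\Omega(z_2) = \prod_i N_{\Omega_i}(x_i)$ and unpacking the displayed inclusion produces some $\xi^* \in \sd f(w)$, normals $v_i^* \in N_{\Omega_i}(x_i)$, and remainders $\beta_i^* \in X^*$ with $\sum_i \norm{\beta_i^*} < 2\eps_0/\delta$ satisfying $A^*\xi^* + (v_1^*,\ldots,v_n^*) + (\beta_1^*,\ldots,\beta_n^*) = 0$. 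Setting $x_i^* := v_i^* + \beta_i^*$ and reading off coordinates yields $\xi^* = -(x_1^*,\ldots,x_{n-1}^*,\sum_{i=1}^n x_i^*)$, which is \eqref{eq:NC_extremality_subdifferential}, while $\dist_{N_{\Omega_i}(x_i)}(x_i^*) \le \norm{\beta_i^*}$ gives \eqref{eq:NC_extremality_normals_close_to_cone}. The convex addendum follows since convexity passes from $(f,\Omega_i)$ to $(\hat f,\Omega)$, so \cref{thm:main_result_wrt_set} additionally yields $\hat f(z_1) \le \hat f(\bz)$, i.e., $f(w) \le f(0,\ldots,0,\bx)$. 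The only nontrivial technical point is the subdifferential chain rule for $A$, a routine consequence of invertibility together with the equivalence of $\norm{z}$ and $\norm{Az}$ on $X^n$; everything else is parameter bookkeeping.
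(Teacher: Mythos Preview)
Your proposal is correct and follows essentially the same route as the paper's proof: reduce to \cref{thm:main_result_wrt_set} on the product space with $\hat f$ and $\Omega=\Omega_1\times\cdots\times\Omega_n$, then translate the resulting inclusion back to $f$ via the chain rule for the linear isomorphism $A$. The only cosmetic difference is that the paper selects $x^*$ directly with $-x^*\in\sd\hat f(z')$ and $\dist_{N_\Omega(z)}(x^*)<\eps$, whereas you first split into subgradient, normal, and remainder and then set $x_i^*:=v_i^*+\beta_i^*$; the algebra is identical. One tiny bookkeeping point: your choice $\eps_0<\eps\delta/2$ does not automatically give $\eps_0<\eps$ when $\delta\ge 2$, so to guarantee $f(0,\ldots,0,\bx)<\eps$ you should also impose $\eps_0<\eps$ (the paper handles this by taking $\eps':=\eps\min(\delta/2,1)$).
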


\begin{proof}
Let arbitrary numbers
$\eps>0$ and $\eta>0$ be fixed.
Choose a number $\de\in(0,\eps)$ so that ${B}_{\de}(\bx)\subset U$, and set $\eps':=\eps\min(\de/2,1)$.
By \cref{def:extremality}, there exist a function $f\in\mathcal{F}$ and a number $\rho>0$ such that $f(0,\ldots,0,\bx)<\eps'\le\eps$, and
condition \eqref{eq:extremality_nonnegativity} holds, where the function
$\hat f\colon X^{n}\to\R_\infty$ is defined by
\eqref{eq:hatf}.
Observe that $\Omega$ is a
closed subset of the Banach space $X^n$,
$\bz:=(\bx,\ldots,\bx)\in\Omega$, and $\hat f(\bz)=f(0,\ldots,0,\bx)<\eps'$.
Since the function $f$ is nonnegative, so is $\hat f$, and, consequently,
$\bz$ is an $\eps'$-minimal
point of $\hat f_\Omega$
(as well as $\hat f$) on $X^n$.
Set $\eta':=\min(\eta,\rho)$.
By \cref{thm:main_result_wrt_set}, there exist points
$z:=(x_1,\ldots,x_n)\in\Omega\cap B_{\de}(\bz)$,
$z':=(x_1',\ldots,x_n')\in B_{\eta'}(z)$,
and
$x^*:=(x_{1}^*,\ldots, x_{n}^*)\in(X^*)^n$ such that
$f(w)=\hat f(z')<\hat f(\bz)+\eta=f(0,\ldots,0,\bx)+\eta$, and
\begin{equation}\label{eq:conditions_for_hatf}
-x^*\in\sd\hat f(z'),\qquad
\dist_{N_\Omega(z)}(x^*)<\frac{2\eps'}\de\le\eps.
\end{equation}
Moreover, if $f$ and $\Omega_1,\ldots,\Omega_n$ are convex, then
$f(w)\le f(0,\ldots,0,\bx)$.
Observe that $x_i'\in\Omega_i+\rho\B$ ($i=1,\ldots,n$), and it follows from \eqref{eq:extremality_nonnegativity} that $f(w)=\hat f(z')>0$
which shows \eqref{eq:NC_extremality_bounds_function_value}.

The function $\hat f$ given by \eqref{eq:hatf} is a composition of $f$ and
the continuous linear mapping $A\colon X^{n}\to X^{n}$ given as follows:
\begin{align*}
\forall (u_1,\ldots,u_n)\in X^{n}\colon\quad
A(u_1,\ldots,u_n):=(u_1-u_n,\ldots,u_{n-1}-u_n,u_n).
\end{align*}
The mapping $A$ is obviously a bijection.
It is easy to check that the adjoint mapping $A^*\colon (X^*)^{n}\to (X^*)^{n}$ is of the form
\begin{equation}\label{eq:def_linear_operator}
\forall (u_1^*,\ldots,u_{n}^*)\in(X^*)^{n}\colon\quad
A^*(u_1^*,\ldots,u_{n}^*):=
\left(u^*_1,\ldots,u^*_{n-1},u_{n}^*-\sum_{i=1}^{n-1}u_{i}^*\right).
\end{equation}
By the Fr\'{e}chet subdifferential chain rule, which can be distilled from
\cite[Theorem~1.66, Proposition~1.84]{Mordukhovich2006}),
we obtain $\sd\hat f(z')=A^*\sd f(w)$,
where
$w=Az'=(x'_1-x'_n,\ldots,x'_{n-1}-x'_n,x'_n)$.
In view of \eqref{eq:def_linear_operator}, the inclusion in \eqref{eq:conditions_for_hatf} is equivalent to \eqref{eq:NC_extremality_subdifferential}.
It now suffices to observe that
$N_\Omega(z)=N_{\Omega_1}(x_1)\times\ldots\times N_{\Omega_n}(x_n)$, and, consequently, the inequality in \eqref{eq:conditions_for_hatf} yields \eqref{eq:NC_extremality_normals_close_to_cone}.
\end{proof}

For the conclusions of \cref{thm:necessary_condition_extremality} to be non-trivial,
one must ensure that the family $\mathcal{F}$
satisfies the following conditions:
\begin{enumerate}
\item\label{item:non_triviality_inf_value}
$\inf_{f\in\mathcal{F}}f(0,\ldots,0,\bx)=0$;
\item\label{item:non_triviality_subgradients}
$\liminf\limits_{w\to(0,\ldots,0,\bar x),\,f\in\mathcal F,\,f(w)\downarrow0,\,w^*\in\sd f(w)}\norm{w^*}>0$.
\end{enumerate}

A typical example of such a family is given by the collection $\mathcal{F}_A$ of
functions of type
\begin{equation}\label{eq:f_standard_extremality}
\forall z:=(x_1,\ldots,x_n)\in X^{n}\colon\quad
f_a(z):=\max_{1\le i\le n-1}\|x_i-a_i\|,
\end{equation}
where $a:=(a_1,\ldots,a_{n-1})\in X^{n-1}$.
The proofs of the conventional extremal principle and its extensions usually employ such functions.
Note that
functions from $\mathcal{F}_A$ are
constant in the last variable.

It is easy to see that, for each $f_a\in\mathcal{F}_A$ and
$z:=(x_1,\ldots,x_n)\in X^n$, the value $f_a(z)$ is the maximum norm of
$(x_1-a_1,\ldots,x_{n-1}-a_{n-1})$ in $X^{n-1}$.
Thus, $f_a(z)>0$ if and only if $(x_1,\ldots,x_{n-1})\ne a$, and
\begin{align*}
	f_a(0,\ldots,0,\bar x)
	=
	\max_{1\le i\le n-1}\|a_i\|
	\to
	0	
	\quad
	\text{as}
	\quad
	a\to0
\end{align*}
showing~\ref{item:non_triviality_inf_value}.
Moreover, $\sd f_a(z)\ne\varnothing$ for all $z\in X^{n}$ and, if $f_a(z)>0$, then $\norm{w^*}=1$ for all
$w^*\in\sd f_a(w)$, i.e., the
limit
in~\ref{item:non_triviality_subgradients} equals $1$.
Observe also that, since each function $f_a\in\mathcal{F}_A$ is convex and Lipschitz continuous,
the same holds true for the corresponding function $\hat f_a$ defined by \eqref{eq:hatf}.
Hence, $\hat f_a$ is automatically lower semicontinuous near each point of $X^n$
relative to each set containing this point, see \cref{cor:sufficient_criteria_lower_semicontinuity_relative_to_a_set}.

When $f_a\in\mathcal{F}_A$ is given by \eqref{eq:f_standard_extremality}, condition \eqref{eq:extremality_nonnegativity} takes the following form:
\begin{equation}\label{eq:separation_new}
\bigcap_{i=1}^{n-1}(\Omega_i+\rho\B-a_i)\cap (\Omega_n+\rho\B)=\varnothing.
\end{equation}
With this in mind, the extremality property in \cref{def:extremality} admits a geometric interpretation.

\begin{proposition}\label{prop:geometric_version_extremality}
The collection $\{\Omega_1,\ldots,\Omega_n\}$ is
$\mathcal{F}_A$-extremal at $\bx$ if and only if,
for each $\eps>0$, there exist vectors $a_1,\ldots,a_{n-1}\in X$ and a number $\rho>0$ such that
$\max_{1\le i\le n-1}\|a_i\|<\eps$, and
condition \eqref{eq:separation_new} holds.
\end{proposition}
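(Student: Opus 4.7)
The plan is to unpack the definition of $\mathcal{F}_A$-extremality directly, using the explicit formula \eqref{eq:f_standard_extremality} for the functions $f_a \in \mathcal F_A$, and show that each of the ingredients of \cref{def:extremality} translates literally into one of the ingredients of the claimed geometric condition \eqref{eq:separation_new}. Since $\mathcal F_A$ is parameterized by the tuple $a = (a_1,\ldots,a_{n-1}) \in X^{n-1}$, choosing $f \in \mathcal F_A$ is the same as choosing the vectors $a_1,\ldots,a_{n-1}$, so the equivalence should be essentially a direct computation.

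First I would observe that, because each $f_a$ is constant in the last variable,
\[
f_a(0,\ldots,0,\bx) = \max_{1\le i\le n-1}\|a_i\|,
\]
so the requirement $f_a(0,\ldots,0,\bx) < \eps$ in \cref{def:extremality} coincides with the bound $\max_{1\le i \le n-1}\|a_i\| < \eps$ in the proposition.

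Next I would compute $\hat f_a$ via \eqref{eq:hatf}: for any $(x_1,\ldots,x_n)\in X^n$,
\[
\hat f_a(x_1,\ldots,x_n) = \max_{1\le i\le n-1}\|(x_i - x_n) - a_i\|,
\]
which vanishes precisely when $x_i = x_n + a_i$ for every $i=1,\ldots,n-1$. Therefore the positivity condition \eqref{eq:extremality_nonnegativity} with $f = f_a$ means exactly that no point $x_n \in \Omega_n + \rho\B$ admits, for each $i < n$, a representative $x_i \in \Omega_i + \rho\B$ with $x_i = x_n + a_i$; equivalently, there is no $x_n \in \Omega_n + \rho\B$ lying in every translated enlargement $(\Omega_i + \rho\B) - a_i$. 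Rewriting this nonexistence as emptiness of an intersection gives precisely \eqref{eq:separation_new}.

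Stringing these two observations together in both directions yields the biconditional: given $\mathcal F_A$-extremality with witnesses $f_a$ and $\rho$, the vectors $a_1,\ldots,a_{n-1}$ extracted from $f_a$ together with $\rho$ satisfy \eqref{eq:separation_new}; conversely, given $a_1,\ldots,a_{n-1}$ and $\rho$ as in the proposition, setting $f := f_a$ produces a function in $\mathcal F_A$ witnessing the extremality. There is no real obstacle beyond keeping the bookkeeping straight between the original variables in \cref{def:extremality} and the ``$A$-shifted'' variables used in \eqref{eq:hatf}; the proof is essentially a short unpacking and should fit in a few lines.
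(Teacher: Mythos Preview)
Your proposal is correct and mirrors the paper's own treatment: the paper does not give a formal proof but derives the proposition directly from the preceding observations that $f_a(0,\ldots,0,\bx)=\max_{1\le i\le n-1}\|a_i\|$ and that condition \eqref{eq:extremality_nonnegativity} for $f_a$ reduces to \eqref{eq:separation_new}. Your unpacking of $\hat f_a$ and the translation of its positivity into the empty-intersection condition is exactly this argument made explicit.
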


The characterization in \cref{prop:geometric_version_extremality} means that sets with nonempty intersection can be
``pushed apart'' by arbitrarily small translations in such a way that even small enlargements
of the sets become nonintersecting.
Note that condition \eqref{eq:separation_new} is
stronger
than the conventional extremality property originating from \cite{KruMor80},
which corresponds to setting $\rho=0$ in \eqref{eq:separation_new}.
The converse statement is not true as the
next example shows.

\begin{example}\label{ex:extremality_more_restrictive}
Consider the closed sets $\Omega_1,\Omega_2\subset\R^2$ given by
\begin{align*}
	\Omega_1:=\left\{(x,y)\mid x\ge0,\;y=0\right\},\quad
	\Omega_2:=\left\{(x,y)\mid x\ge 0,\;|y|\ge e^{-x} \right\}\cup\{(0,0)\},
\end{align*}
see \cref{fig:counterexample_extremality}.
We have $\Omega_1\cap\Omega_2=\{(0,0)\}$ and $(\Omega_1-(t,0))\cap\Omega_2=\varnothing$ for each $t<0$.
At the same time,
$(\Omega_1+\rho\B-a)\cap(\Omega_2+\rho\B)\ne\varnothing$ for all $a\in\R^2$ and $\rho>0$.
By \cref{prop:geometric_version_extremality}, $\{\Omega_1,\Omega_2\}$ is not
$\mathcal{F}_A$-extremal at $(0,0)$.
\end{example}

\begin{figure}[ht]
    \centering
    \hfill
    \begin{subfigure}[t]{0.4\textwidth}
        \centering
        \includegraphics{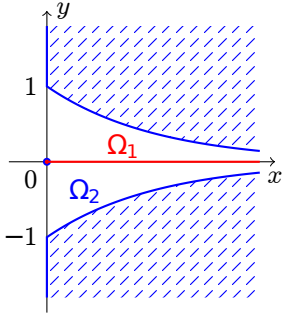}
        \caption{Sets from \cref{ex:extremality_more_restrictive}.}
        \label{fig:counterexample_extremality}
    \end{subfigure}
    \hfill
    \begin{subfigure}[t]{0.4\textwidth}
        \centering
        \includegraphics{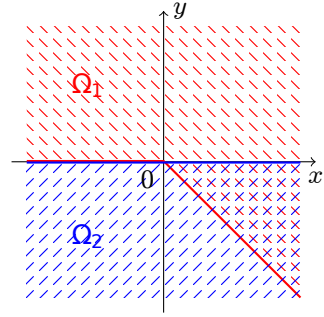}
        \caption{Sets from \cref{ex:generalized_separation}.}
        \label{fig:example_generalized_separation}
    \end{subfigure}
    \hfill
    \caption{Visualization of the sets $\Omega_1$ and $\Omega_2$ from \cref{ex:extremality_more_restrictive,ex:generalized_separation}.}
    \label{fig:sets}
\end{figure}

\Cref{thm:necessary_condition_extremality} produces the following necessary condition for $\mathcal{F}_A$-extremality.

\begin{corollary}\label{cor:NC_extremality}
Assume that
either $X$ is Asplund, or
$\Omega_1,\ldots,\Omega_n$ are convex.
Suppose that
the collection $\{\Omega_1,\ldots,\Omega_n\}$ is
$\mathcal{F}_A$-extremal at $\bx$.
Then,
for each $\eps>0$, there exist points
$x_i\in\Omega_i\cap B_{\eps}(\bx)$ and $x_i^*\in X^*$ $(i=1,\ldots,n)$ satisfying \eqref{eq:NC_extremality_normals_close_to_cone} and
\begin{subequations}\label{eq:NC_extremality_standard}
	\begin{align}
		\label{eq:NC_extremality-1}
			\sum_{i=1}^{n}x_i^*&=0,\\
		\label{eq:NC_extremality-2}
			\sum_{i=1}^{n-1}\norm{x_i^*}&=1.
	\end{align}
\end{subequations}
Moreover, for each $\tau\in(0,1)$, the points
$x_i$ and $x_i^*$ $(i=1,\ldots,n)$ can be chosen so that
\begin{equation}
\label{eq:NC_extremality_additional_estimate}
\sum_{i=1}^{n-1}\ang{x_{i}^*,x_n-x_i+a_i} >\tau\max_{1\le{i}\le{n}-1} \|x_n-x_i+a_i\|,
\end{equation}
where $a_1,\ldots,a_{n-1}$ are vectors satisfying the characterization in \cref{prop:geometric_version_extremality}.
\end{corollary}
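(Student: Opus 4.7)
The plan is to apply \cref{thm:necessary_condition_extremality} to the family $\mathcal{F}_A$ of functions $f_a$ introduced in \eqref{eq:f_standard_extremality}. Each $f_a$ is convex and globally $1$-Lipschitz, so the composition $\hat f_a$ appearing in \eqref{eq:hatf} is convex and $2$-Lipschitz, which by \cref{cor:sufficient_criteria_lower_semicontinuity_relative_to_a_set}\,\ref{item:lsc_via_uniform_continuity} is lower semicontinuous near every point relative to every set. Using \cref{prop:geometric_version_extremality}, the assumed $\mathcal{F}_A$-extremality at $\bar x$ furnishes, for any $\eps>0$, vectors $a_1,\ldots,a_{n-1}\in X$ with $\max_i\|a_i\|<\eps$ and a number $\rho>0$ fulfilling \eqref{eq:separation_new}, and this $f_a$ witnesses the extremality.

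To extract \eqref{eq:NC_extremality-1} and \eqref{eq:NC_extremality-2}, I would compute $\partial f_a(w)$ at $w:=(x_1'-x_n',\ldots,x_{n-1}'-x_n',x_n')$. Since $f_a$ is independent of the last coordinate, every element of $\partial f_a(w)$ has a zero in that slot. At points with $f_a(w)>0$ (which holds here by \eqref{eq:NC_extremality_bounds_function_value}), the standard description of the subdifferential of the max-norm gives tuples $(u_1^*,\ldots,u_{n-1}^*,0)$ satisfying $\sum_{i=1}^{n-1}\|u_i^*\|=1$ together with the support identity $\sum_{i=1}^{n-1}\langle u_i^*,w_i-a_i\rangle=f_a(w)$. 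Reading the last coordinate in \eqref{eq:NC_extremality_subdifferential} forces $\sum_{i=1}^{n}x_i^*=0$, while $-x_i^*=u_i^*$ for $i<n$ gives $\sum_{i=1}^{n-1}\|x_i^*\|=1$, which are exactly \eqref{eq:NC_extremality-1} and \eqref{eq:NC_extremality-2}; condition \eqref{eq:NC_extremality_normals_close_to_cone} is inherited from the theorem.

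For the refinement \eqref{eq:NC_extremality_additional_estimate}, set $\tilde\alpha_i:=a_i-w_i=x_n'-x_i'+a_i$; the support identity rewrites as $\sum_{i=1}^{n-1}\langle x_i^*,\tilde\alpha_i\rangle=f_a(w)=\max_{1\le i\le n-1}\|\tilde\alpha_i\|$. With $\alpha_i:=x_n-x_i+a_i$, the perturbation $\alpha_i-\tilde\alpha_i=(x_n-x_n')-(x_i-x_i')$ has norm strictly less than $2\eta$, so using $\sum_{i=1}^{n-1}\|x_i^*\|=1$ I obtain
\[
\sum_{i=1}^{n-1}\langle x_i^*,\alpha_i\rangle\ge f_a(w)-2\eta,
\qquad
\max_{1\le i\le n-1}\|\alpha_i\|\le f_a(w)+2\eta.
\]
Consequently, \eqref{eq:NC_extremality_additional_estimate} reduces to the strict inequality $f_a(w)>2\eta(1+\tau)/(1-\tau)$.

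The delicate step is to secure this quantitative lower bound on $f_a(w)$. I would derive it from \eqref{eq:separation_new} directly: for any $(y_1,\ldots,y_n)\in\Omega:=\Omega_1\times\cdots\times\Omega_n$ with $\max_{i<n}\|y_i-y_n-a_i\|<\rho$, the point $p:=y_n$ would belong to $\Omega_n+\rho\B$ and, for each $i<n$, $p+a_i=y_n+a_i$ would satisfy $\dist_{\Omega_i}(y_n+a_i)<\rho$, placing $p$ in $\bigcap_{i<n}(\Omega_i+\rho\B-a_i)\cap(\Omega_n+\rho\B)$ and contradicting \eqref{eq:separation_new}; hence $\hat f_a\ge\rho$ on $\Omega$. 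Applying the $2$-Lipschitz estimate of $\hat f_a$ to $z':=(x_1',\ldots,x_n')$ together with the point $z:=(x_1,\ldots,x_n)\in\Omega$ satisfying $\|z-z'\|<\eta$ yields $f_a(w)=\hat f_a(z')>\rho-2\eta$. Choosing $\eta<\rho(1-\tau)/4$ at the outset then establishes \eqref{eq:NC_extremality_additional_estimate} and completes the proof.
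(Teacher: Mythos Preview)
Your proposal is correct and follows essentially the same route as the paper's proof: apply \cref{thm:necessary_condition_extremality} to the Lipschitz (hence relatively lower semicontinuous) family $\mathcal{F}_A$, read off \eqref{eq:NC_extremality-1}--\eqref{eq:NC_extremality-2} from the subdifferential of the max-norm at a nonzero point, and derive \eqref{eq:NC_extremality_additional_estimate} by combining the support identity with the separation bound $\hat f_a\ge\rho$ on $\Omega$ and the choice $\eta<\rho(1-\tau)/4$. The only cosmetic difference is bookkeeping in the final estimate: the paper bounds $\max_i\|x_n-x_i+a_i\|\ge\rho$ directly and chains $\sum_i\langle x_i^*,\alpha_i\rangle>\max_i\|\alpha_i\|-4\eta\ge\tau\max_i\|\alpha_i\|$, whereas you first reduce to $f_a(w)>2\eta(1+\tau)/(1-\tau)$ and then invoke the $2$-Lipschitz bound $f_a(w)>\rho-2\eta$; both arrive at the same constraint on $\eta$.
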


\begin{proof}
Fix $\eps>0$ arbitrarily.
Recall that, for each $f_a\in\mathcal{F}_A$, the function
$\hat f_a\colon X^{n}\to\R_\infty$ defined according to
\eqref{eq:hatf} is lower semicontinuous near $\bz:=(\bx,\ldots,\bx)$ relative to $\Omega:=\Omega_1\times\ldots\times\Omega_n$.
By definition of $\mathcal{F}_A$, \cref{prop:geometric_version_extremality}, and \cref{thm:necessary_condition_extremality},
for each $\eta>0$, there exist vectors $a_1,\ldots,a_{n-1}\in X$, points
$x_i\in\Omega_i\cap B_{\eps}(\bx)$, $x'_i\in B_{\eta}(x_i)$, and $x_i^*\in X^*$ $(i=1,\ldots,n)$, and a number $\rho>0$ such that
$\max_{1\le i\le n-1}\|a_i\|<\eps$,
and conditions \eqref{eq:NC_extremality} and \eqref{eq:separation_new} hold, where
$w:=(x'_1-x'_n,\ldots,x'_{n-1}-x'_n,x'_n)$ and
the function $f$ is replaced by $f_a$ defined by \eqref{eq:f_standard_extremality}.
Clearly, we find
\begin{align*}
	\sd f_a(w)
	=
	\sd\|\cdot\|_{X^{n-1}}(x'_1-x'_n-a_1,\ldots,x'_{n-1}-x'_n-a_{n-1}) \times\{0\},
\end{align*}
where $\|\cdot\|_{X^{n-1}}$ is the maximum norm in
$X^{n-1}$.
Condition \eqref{eq:NC_extremality-1}
follows immediately from \eqref{eq:NC_extremality_subdifferential}.
Moreover,
since $f_a(w)>0$, we can apply \cite[Corollary~2.4.16]{Zal02} to find that condition \eqref{eq:NC_extremality-2} is satisfied, and
\begin{equation}\label{eq:NC_extremality_intermediate_finding}
\sum_{i=1}^{n-1}\ang{x_{i}^*,x_n'-x_i'+a_i} = f_a(w).
\end{equation}

Let an arbitrary number
$\tau\in(0,1)$ be fixed, and let $\eta:=\rho(1-\tau)/4$.
In view of \eqref{eq:separation_new}, we have
\begin{equation}\label{eq:NC_extremality_intermediate_finding3}
\max_{1\le{i}\le{n}-1}\|x_n-x_i+a_i\|\ge\rho.
\end{equation}
Using \eqref{eq:f_standard_extremality}, \eqref{eq:NC_extremality-2}, \eqref{eq:NC_extremality_intermediate_finding},
and \eqref{eq:NC_extremality_intermediate_finding3}, we can prove the remaining estimate \eqref{eq:NC_extremality_additional_estimate}:
\begin{align*}
\sum_{i=1}^{n-1}\ang{x_{i}^*,x_n-x_i+a_i}
\ge
&\sum_{i=1}^{n-1}\left(\ang{x_{i}^*,x_n'-x_i'+a_i} -2\norm{x_i^*}\max_{1\le{j}\le{n}}\norm{x_j-x_j'}\right)
\\
>
&\sum_{i=1}^{n-1}\ang{x_{i}^*,x_n'-x_i'+a_i}-2\eta
\\
=
&\max_{1\le{i}\le{n}-1}\|x_n'-x_i'+a_i\|-2\eta
\\
>
&\max_{1\le{i}\le{n}-1}\|x_n-x_i+a_i\|-4\eta
\\
=
&\max_{1\le{i}\le{n}-1}\|x_n-x_i+a_i\|-\rho(1-\tau)
\\
\geq
&\tau\max_{1\le{i}\le{n}-1}\|x_n-x_i+a_i\|.
\end{align*}
This completes the proof.
\end{proof}

The next example illustrates
application of \cref{thm:necessary_condition_extremality} in
the case where $\mathcal F$ consists of discontinuous functions.

\begin{example}
\label{ex:generalized_separation}
Consider the closed sets $\Omega_1,\Omega_2\subset\R^2$ given by
\begin{align*}
	\Omega_1:=\left\{(x,y)\mid\max(y,x+y)\ge0\right\},\quad
	\Omega_2:=\left\{(x,y)\mid y\le0\right\}.
\end{align*}
Let us equip $\R^2$ with the Euclidean norm.
We have $(0,0)\in\Omega_1\cap\Omega_2$ and
$\intr(\Omega_1\cap\Omega_2)=\{(x,y)\,|\,y>0,\, x+y>0\}$.
Hence, these sets cannot be ``pushed apart'', and $\{\Omega_1,\Omega_2\}$ is not
extremal at $(0,0)$ in the conventional sense, see \cref{fig:example_generalized_separation}
for an illustration.
Let the family $\mathcal{F}$ consist of all nonnegative lower semicontinuous functions
$f_t\colon\R^2\times\R^2\to\R_\infty$ of the type
\begin{equation}
\label{eq:non_standard_family_for_extremality}
\forall (x,y),(u,v)\in\R^2\times\R^2\colon\quad
f_t((x,y),(u,v)):=
\norm{(x,y+t)}+i_{(-\infty,0]}(u),
\end{equation}
corresponding to all $t\ge0$.

We now show that $\{\Omega_1,\Omega_2\}$ is $\mathcal{F}$-extremal at $(0,0)$.
Indeed, for each $\eps>0$ and $t\in(0,\eps)$, we have $f_t((0,0),(0,0))=t<\eps$.
The function from \eqref{eq:hatf} takes the form
\[
\forall (x,y),(u,v)\in\R^2\times\R^2\colon\quad
\hat f_t((x,y),(u,v)):=
\norm{(x-u,y-v+t)}+i_{(-\infty,0]}(u).
\]
Let $\rho\in(0,t/3)$, $(x,y)\in\Omega_1+\rho\B$, and $(u,v)\in\Omega_2+\rho\B$.
If $u>0$ or $x\ne u$, then $\hat f_t((x,y),(u,v))>0$.
Let $x=u\le0$.
Then $y>-2\rho$, $v<\rho$, and, consequently, $\hat f_t((x,y),(u,v))=|y-v+t|>-3\rho+t>0$.
Hence, condition \eqref{eq:extremality_nonnegativity} holds, i.e., $\{\Omega_1,\Omega_2\}$ is $\mathcal{F}$-extremal at $(0,0)$.

For each $t\ge0$, $\hat f_t$ is Lipschitz continuous on $\dom\hat f_t=\R^2\times((-\infty,0]\times\R)$
and, for every point $((x,y),(u,v))\in\dom\hat f_t$, the distance $\dist_{\Omega_1\times\Omega_2}((x,y),(u,v))$ is attained
at some point $((x',y'),(u',v'))$ with $u'=u$, i.e., $((x',y'),(u',v'))\in\dom\hat f_t$.
Using this, it is easy to see from \cref{def:lower_semicontinuity_relative_to_set}\,\ref{item:def_lower_semicontinuity_relative_to_set_near_point}
that $\hat f_t$ is lower semicontinuous near $((0,0),(0,0))$ relative to $\Omega_1\times\Omega_2$.

By \cref{thm:necessary_condition_extremality},
for each $\eps>0$, there exist a number $t\in(0,\eps)$ and points
$(x,y)\in\Omega_1\cap B_{\eps}(0,0)$, $(u,v)\in\Omega_2\cap B_{\eps}(0,0)$, $(x^*,y^*),(u^*,v^*)\in\R^2$, and
$w\in X^2\times X^2$
such that
$0<f_t(w)<\infty$ and
\begin{subequations}\label{eq:non_trivial_generalized_separation}
	\begin{align}
	\label{eq:non_trivial_generalized_separation_distance_to_cone}
	\dist_{N_{\Omega_1}(x,y)}\left((x^*,y^*)\right)+ \dist_{{N}_{\Omega_2}(u,v)}\left((u^*,v^*)\right)<\eps,&
	\\
	\label{eq:non_trivial_generalized_separation_subdifferential}
	-\left((x^*,y^*),(x^*,y^*)+(u^*,v^*)\right)\in\sd f_t(w).
	\end{align}
\end{subequations}
In view of \eqref{eq:non_standard_family_for_extremality},
it follows from \eqref{eq:non_trivial_generalized_separation_subdifferential} that $\norm{(x^*,y^*)}=1$, $x^*+u^*\leq 0$, and $y^*+v^*=0$.
When $\eps$ is sufficiently small, condition \eqref{eq:non_trivial_generalized_separation_distance_to_cone} implies one of the following situations:
\begin{itemize}
	\item $x<0$, $y=v=0$, and $(x^*,y^*)$ as well as $(u^*,v^*)$ can be made arbitrarily close to
		$(0,-1)$ and $(0,1)$, respectively,
	\item $x>0$, $y=-x$, $v=0$, and $(x^*,y^*)$ as well as $(u^*,v^*)$ can be made
		arbitrarily close to $(-\sqrt 2/2,-\sqrt 2/2)$ and $(0,\sqrt 2/2)$, respectively.
\end{itemize}
This can be interpreted as a kind of generalized separation.
\end{example}

\section{Geometrically-constrained optimization problems with composite objective function}
\label{sec:geometric_constraints}

In this section, we are going to apply the theory of \cref{sec:main_result} to the optimization problem
\begin{equation}\label{eq:non_Lipschitz_objective}\tag{Q}
	\min\{f(x)+q(x)\,|\,G(x)\in K,\,x\in C\}
\end{equation}
where $f\colon X\to\R$ is continuously Fr\'{e}chet differentiable, $q\colon X\to\R_\infty$ is 
lower semicontinuous, $G\colon X\to Y$ is continuously Fr\'{e}chet differentiable, and
$C\subset X$ as well as $K\subset Y$ are nonempty and closed.
Here, $X$ and $Y$ are assumed to be Banach spaces.
Throughout the section, the feasible set of \eqref{eq:non_Lipschitz_objective} will be denoted
by $\mathcal S$, and we implicitly assume $\mathcal S\cap\dom q\neq\varnothing$ in order
to avoid trivial situations.

Observe that the objective function $\varphi:=f+q$ can be decomposed into a regular part $f$ and
some challenging irregular part $q$ while the
constraints in \eqref{eq:non_Lipschitz_objective} are stated in so-called geometric form.
In this regard, the model \eqref{eq:non_Lipschitz_objective} still covers numerous applications
ranging from data science and image processing (in case where $q$ is a sparsity-promoting functional)
over conic programs (in which case $K$ is a convex cone) to disjunctive programs
which comprise, exemplary, complementarity- and cardinality-constrained problems (in this situation,
$K$ is a nonconvex set of combinatorial structure).

In the subsequently stated remark, we embed program \eqref{eq:non_Lipschitz_objective} into the
rather general framework which has been discussed in \cref{sec:main_result}.
\begin{remark}\label{rem:embedding_Q}
	Observing that $f$ is differentiable, we find
	\[
		\forall x\in X\colon\quad
		\sd \varphi(x)=\sd(f+q)(x)=f'(x)+\sd q(x)
	\]
	from the sum rule stated in \cite[Corollary~1.12.2]{Kru03}.
	The feasibility mapping $\Phi\colon X\tto Y\times X$ associated with
	\eqref{eq:non_Lipschitz_objective} is given by means of
	$\Phi(x):=(G(x)-K,x-C)$ for all $x\in X$, see \cref{ex:upper_semicontinuity}.
	We find
	\begin{equation}\label{eq:gph_Phi}
		\gph\Phi
		=
		\{(x,(y,x'))\in X\times Y\times X\,|\,(G(x)-y,x-x')\in K\times C\}.
	\end{equation}
	Observing that the continuously differentiable mapping $(x,y,x')\mapsto(G(x)-y,x-x')$
	possesses a surjective derivative,
	we can apply the change-of-coordinates formula from
	\cite[Corollary~1.15]{Mordukhovich2006} in order to obtain
	\[
		N_{\gph\Phi}(x,(y,x'))
		=
		\left\{(G'(x)^*\lambda+\eta,-\lambda,-\eta)\in X^*\times Y^*\times X^*\,\middle|\,
			\begin{aligned}
				&\lambda\in N_K(G(x)-y),\\
				&\eta\in N_C(x-x')
			\end{aligned}
			\right\}
	\]
	for each triplet $(x,(y,x'))\in\gph\Phi$, and this yields
	\[
		D^*\Phi(x,(y,x'))(\lambda,\eta)
		=
		\begin{cases}
			G'(x)^*\lambda+\eta	&	\text{if }\lambda\in N_K(G(x)-y),\,\eta\in N_C(x-x'),\\
			\varnothing			&	\text{otherwise}
		\end{cases}
	\]
	for arbitrary $\lambda\in Y^*$ and $\eta\in X^*$.
\end{remark}

\subsection{Approximate stationarity and uniform qualification condition}\label{sec:asymptotic_stuff}

The subsequent theorem is a simple consequence of \cref{cor:necessary_conditions_local_minimizer}
and \cref{rem:embedding_Q},
and provides a necessary optimality condition for \eqref{eq:non_Lipschitz_objective}.

\begin{theorem}\label{thm:asymptotic_stationarity_necessary_optimality_condition}
	Fix $\bar x\in\mathcal S\cap\dom q$ and assume that
	\begin{enumerate}
		\item \label{item:lower_semicontinuity_assumption_geometric_constraints}
			the function $f+q$ is lower semicontinuous near $\bar x$ relative to $\Phi$ from \cref{rem:embedding_Q} at $(0,0)$;
		\item \label{item:asplund_or_convex_geometric_constraints}
			either $X$ and $Y$ are Asplund, or $f$, $q$, and $\gph\Phi$ from \eqref{eq:gph_Phi} are convex.
	\end{enumerate}
	Suppose that $\bx$ is a local minimizer of \eqref{eq:non_Lipschitz_objective}.
	Then, for each $\eps>0$, there exist points $x,x',x''\in B_\eps(\bx)$ and $y\in\eps\B$ such that $|q(x)-q(\bar x)|<\varepsilon$ and
	\begin{equation}\label{eq:approximate_stationarity}
		0\in f'(x)+\sd q(x)+G'(x')^*N_K(G(x')-y)+N_C(x'')+\eps\B^*.
	\end{equation}
\end{theorem}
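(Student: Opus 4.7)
The plan is to apply \cref{cor:necessary_conditions_local_minimizer} directly, having embedded problem~\eqref{eq:non_Lipschitz_objective} into the framework \eqref{eq:basic_problem} exactly as described in \cref{rem:embedding_Q}. Specifically, I would set $\varphi := f + q$, $Y' := Y \times X$ (the ambient space of the range of $\Phi$), and take $\bar y := (0,0)$. Hypothesis~\ref{item:lower_semicontinuity_assumption_geometric_constraints} is precisely the lower semicontinuity assumption required by \cref{cor:necessary_conditions_local_minimizer}; hypothesis~\ref{item:asplund_or_convex_geometric_constraints} matches the Asplund/convex dichotomy, noting that if $X$ and $Y$ are Asplund, so is $Y \times X$. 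Since $\bar x$ is a local minimizer of~\eqref{eq:non_Lipschitz_objective}, it is a local minimizer of $\varphi_{\Phi^{-1}(0,0)}$, so the corollary applies.

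Next, for a prescribed $\varepsilon > 0$, I would first fix a smaller tolerance $\varepsilon' \in (0, \varepsilon/2)$, chosen so small that continuous Fr\'echet differentiability of $f$ on $B_{\varepsilon'}(\bar x)$ guarantees $|f(x) - f(\bar x)| < \varepsilon/2$ for all $x \in B_{\varepsilon'}(\bar x)$. Applying \cref{cor:necessary_conditions_local_minimizer} with $\varepsilon'$ yields points $x_1, x_2 \in B_{\varepsilon'}(\bar x)$, an element $y_2 = (\tilde y, \tilde z) \in \Phi(x_2) \cap B_{\varepsilon'}(0,0)$ with $|\varphi(x_1) - \varphi(\bar x)| < \varepsilon'$, and the inclusion
\begin{equation*}
    0 \in \partial \varphi(x_1) + \operatorname{Im} D^*\Phi(x_2, y_2) + \varepsilon' \B^*.
\end{equation*}

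I would then decode both terms using \cref{rem:embedding_Q}. The sum rule gives $\partial \varphi(x_1) = f'(x_1) + \partial q(x_1)$. From the coderivative formula, $\operatorname{Im} D^*\Phi(x_2,(\tilde y, \tilde z)) = G'(x_2)^* N_K(G(x_2) - \tilde y) + N_C(x_2 - \tilde z)$, since $(\tilde y, \tilde z) \in \Phi(x_2)$ means $G(x_2) - \tilde y \in K$ and $x_2 - \tilde z \in C$. I would then set $x := x_1$, $x' := x_2$, $y := \tilde y$, and $x'' := x_2 - \tilde z$. Size checks: $\|x - \bar x\|, \|x' - \bar x\| < \varepsilon' < \varepsilon$; $\|y\| \le \|y_2\| < \varepsilon' < \varepsilon$; and $\|x'' - \bar x\| \le \|x_2 - \bar x\| + \|\tilde z\| < 2\varepsilon' < \varepsilon$. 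For the functional bound,
\begin{equation*}
    |q(x) - q(\bar x)| \le |\varphi(x_1) - \varphi(\bar x)| + |f(x_1) - f(\bar x)| < \varepsilon' + \varepsilon/2 < \varepsilon,
\end{equation*}
and finally $\varepsilon' \B^* \subset \varepsilon \B^*$, giving exactly the inclusion~\eqref{eq:approximate_stationarity}.

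The proof is essentially bookkeeping: the deep content already resides in \cref{cor:necessary_conditions_local_minimizer} and in the coderivative computation of \cref{rem:embedding_Q}. The only mildly delicate step is the a~priori tightening of the tolerance to $\varepsilon'$ so that continuity of $f$ upgrades the bound $|\varphi(x_1) - \varphi(\bar x)| < \varepsilon'$ into the desired bound $|q(x) - q(\bar x)| < \varepsilon$, and to absorb the factor-of-two loss in $\|x'' - \bar x\|$ arising from the $\tilde z$-shift. No genuine obstacle appears.
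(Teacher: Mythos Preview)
Your proposal is correct and follows exactly the approach the paper indicates: the paper states only that the theorem ``is a simple consequence of \cref{cor:necessary_conditions_local_minimizer} and \cref{rem:embedding_Q}'' without spelling out any details. You have carried out precisely this derivation, including the tolerance-tightening to $\varepsilon' < \varepsilon/2$ needed to absorb the $\tilde z$-shift in $x''$ and to convert the $\varphi$-bound into a $q$-bound via continuity of $f$; all the bookkeeping checks out.
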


In the subsequent remark, we comment on some special situations where the assumptions of \cref{thm:asymptotic_stationarity_necessary_optimality_condition}
are naturally valid and which can be checked in terms of initial data.
\begin{remark}\label{rem:exemplary_settings}
	Let $\bar x\in\mathcal S\cap\dom q$. Due to
\cref{prop:decomposition_lsc,cor:lower_semicontinuity_svm_via_weak_sequential_lower_semicontinuity,ex:upper_semicontinuity,cor:lower_semicontinuity_relative_to_svm_decomposition},
	each of the following conditions implies condition~\ref{item:lower_semicontinuity_assumption_geometric_constraints} of
	\cref{thm:asymptotic_stationarity_necessary_optimality_condition}:
	\begin{enumerate}
		\item \label{item:setting_Asplund}
			the function $f+q$ satisfies one of the conditions~\ref{item:lsc_via_interiority}-\ref{item:lsc_via_uniform_continuity}
			in \cref{cor:sufficient_criteria_lower_semicontinuity_relative_to_a_set} and
			the mapping $\Phi$ from \cref{rem:embedding_Q} is metrically subregular at $(\bx,(0,0))$,
			see \cref{ex:upper_semicontinuity};
		\item \label{item:setting_reflexive}
			$X$ is reflexive, the functions $f$ and $q$ are weakly sequentially lower semicontinuous,
			and condition \eqref{eq:partially_weakly_sequantially_usc_geonetric_constraints}
			holds for all sequences $\{x_k\}_{k\in\N}\subset X$ and all points $x\in X$.
	\end{enumerate}
	Furthermore, condition~\ref{item:asplund_or_convex_geometric_constraints} of
	\cref{thm:asymptotic_stationarity_necessary_optimality_condition} is valid whenever $X$ and $Y$ are Asplund, or if
	$f$, $q$, and $C$ are convex, $K$ is a convex cone, and $G$ is $K$-convex in the following sense:
	\[
		\forall x,x'\in X\,\forall s\in[0,1]\colon\quad
		G(sx+(1-s)x')-s\,G(x)-(1-s)G(x')\in K.
	\]
\end{remark}

We note that \eqref{eq:non_Lipschitz_objective} already satisfies condition~\ref{item:setting_reflexive} of \cref{rem:exemplary_settings}
as soon as $X$ and $Y$ are finite-dimensional.
In the presence of condition~\ref{item:setting_reflexive} from \cref{rem:exemplary_settings},
\cref{thm:asymptotic_stationarity_necessary_optimality_condition} is closely related to
\cite[Proposition~3.3]{BoergensKanzowMehlitzWachsmuth2019} as soon as $q$ is absent.

Due to \cref{thm:asymptotic_stationarity_necessary_optimality_condition}, the following definition is reasonable.
\begin{definition}\label{def:asymptotic_stationarity}
	A point $\bar x\in \mathcal S\cap\dom q$ is an \emph{approximately stationary point} of \eqref{eq:non_Lipschitz_objective}
	if, for each $\varepsilon>0$, there exist points $x,x',x''\in B_\eps(\bx)$ and $y\in\eps\B$ such that $|q(x)-q(\bar x)|<\varepsilon$
	and \eqref{eq:approximate_stationarity} are valid.
\end{definition}

Approximate necessary optimality conditions in terms of Fr\'{e}chet subgradients and normals can be traced back
to the 1980s, see e.g.\ \cite{KruMor80,Kruger1985} and the references therein.

In order to compare the notion of stationarity from \cref{def:asymptotic_stationarity} to others from the literature, let us mention
an equivalent characterization of asymptotic stationarity in terms of sequences.
\begin{remark}\label{rem:asymptotic_stationarity_via_sequences}
	A point $\bar x\in\mathcal S\cap\dom q$ is approximately stationary if and only if
	there are sequences
	$\{x_k\}_{k\in\N},\{x_k'\}_{k\in\N},\{x_k''\}_{k\in\N}\subset X$, $\{y_k\}_{k\in\N}\subset Y$,
	and $\{\eta_k\}_{k\in\N}\subset X^*$ such that $x_k\to\bar x$, $x_k'\to\bar x$, $x_k''\to\bar x$,
	$y_k\to 0$, $\eta_k\to 0$, $q(x_k)\to q(\bar x)$, and
	\[
		\forall k\in\N\colon\quad
		\eta_k\in f'(x_k)+\sd q(x_k)+G'(x_k')^*N_K(G(x_k')-y_k)+N_C(x_k'').
	\]
\end{remark}

In case where $X$ and $Y$ are finite-dimensional while $q$ is locally Lipschitzian, a similar
approximate stationarity condition in terms of sequences has been investigated in \cite[Sections~4, 5.1]{Mehlitz2020b}. In
\cite{BoergensKanzowMehlitzWachsmuth2019}, the authors considered the model
\eqref{eq:non_Lipschitz_objective} with convex sets $K$ and $C$ in the absence of $q$.
Generally, using approximate notions of stationarity in nonlinear programming can be traced back
to \cite{AndreaniMartinezSvaiter2010,AndreaniHaeserMartinez2011}.
Let us mention that in all these papers, the authors speak of \emph{asymptotic} or \emph{sequential} stationarity
conditions.
A sequential Lagrange multiplier rule for convex programs in Banach spaces can be found already in \cite{Thibault1997}.

During the last decade, the concept of approximate stationarity
has been extended to several classes of optimization problems comprising, exemplary,
complementarity- and cardinality-constrained programs,
see \cite{AndreaniHaeserSecchinSilva2019,KanzowRaharjaSchwartz2021,Ramos2019},
conic optimization problems,
see \cite{AndreaniHaeserViana2020},
smooth geometri\-cally-constrained optimization problems in Banach spaces,
see \cite{BoergensKanzowMehlitzWachsmuth2019},
and nonsmooth Lipschitzian optimization problems in finite-dimensional spaces,
see \cite{Mehlitz2020b,Mehlitz2021}.
In each of the aforementioned situations, it has been demonstrated that approximate stationarity,
on the one hand, provides a necessary optimality condition in the absence of constraint qualifications, and
\cref{thm:asymptotic_stationarity_necessary_optimality_condition} demonstrates that this is the case for
our concept from \cref{def:asymptotic_stationarity} as well under reasonable assumptions.
On the other hand, the results from the literature underline that approximate stationarity is naturally satisfied
for accumulation points of sequences generated by some solution algorithms.
In \cref{sec:alm}, we extend these observations to the present setting.

Assume that $\bar x\in \mathcal S\cap\dom q$ is an approximately stationary point of \eqref{eq:non_Lipschitz_objective}.
Due to \cref{rem:asymptotic_stationarity_via_sequences}, we find sequences $\{x_k\}_{k\in\N},\{x_k'\}_{k\in\N},\{x_k''\}_{k\in\N}\subset X$,
$\{y_k\}_{k\in\N}\subset Y$,
and $\{\eta_k\}_{k\in\N}\subset X^*$ satisfying $x_k\to\bar x$, $x_k'\to\bar x$, $x_k''\to\bar x$, $y_k\to 0$, $\eta_k\to 0$, $q(x_k)\to q(\bar x)$, and
$\eta_k\in f'(x_k)+\partial q(x_k)+G'(x_k')^*N_K(G(x_k')-y_k)+N_C(x_k'')$ for each $k\in\N$.
Particularly, we find sequences $\{\lambda_k\}_{k\in\N}\subset Y^*$ and $\{\mu_k\}_{k\in\N}\subset X^*$
of multipliers and a sequences $\{\xi_k\}_{k\in\N}\subset X^*$ of subgradients such that
$\eta_k=f'(x_k)+\xi_k+G'(x_k')^*\lambda_k+\mu_k$, $\lambda_k\in N_K(G(x_k')-y_k)$,
$\mu_k\in N_C(x_k'')$, and $\xi_k\in\partial q(x_k)$ for each $k\in\N$. Assuming for a moment
$\lambda_k\weaklystar\lambda$, $\mu_k\weaklystar\mu$, and $\xi_k\weaklystar\xi$ for some
$\lambda\in Y^*$ and $\mu,\xi\in X^*$, we find $\lambda\in\overline N_K(G(\bar x))$,
$\mu\in\overline N_C(\bar x)$, and $\xi\in\bsd q(\bar x)$ by definition of the limiting normal cone
and subdifferential, respectively, as well as $0=f'(\bar x)+\xi+G'(\bar x)^*\lambda+\mu$,
i.e., a multiplier rule is valid at $\bar x$ which is referred to as M-stationarity in the literature.

\begin{definition}\label{def:M_stationarity}
	A feasible point $\bar x\in\mathcal S\cap\dom q$ is an \emph{M-stationary} point of \eqref{eq:non_Lipschitz_objective}
	if
	\[
		0\in f'(\bar x)+\bsd q(\bar x)+G'(\bar x)^*\overline N_K(G(\bar x))+\overline N_C(\bar x).
	\]
\end{definition}

Let us note that in the case of standard nonlinear programming, where $q$ vanishes while $C:=X$, $Y:=\R^{m_1+m_2}$, and $K:=(-\infty,0]^{m_1}\times\{0\}^{m_2}$
for $m_1,m_2\in\N$,
the system of M-stationarity coincides with the classical Karush--Kuhn--Tucker system.

One can easily check by means of simple examples that approximately stationary points of \eqref{eq:non_Lipschitz_objective}
do not need to be M-stationary even in finite dimensions. Roughly speaking, this phenomenon is caused by the fact that
the multiplier and subgradient sequences $\{\lambda_k\}_{k\in\N}$, $\{\mu_k\}_{k\in\N}$, and $\{\xi_k\}_{k\in\N}$
in the considerations which prefixed \cref{def:M_stationarity} do not
need to be bounded, see \cite[Section~3.1]{Mehlitz2020b} for related observations.
The following example is inspired by \cite[Example~3.3]{Mehlitz2020b}.

\begin{example}\label{ex:approximate_vs_M_stationarity}
	We consider $X=Y=C:=\R$, set $f(x):=x$, $q(x):=0$, as well as $G(x):=x^2$ for all $x\in\R$, and fix $K:=(-\infty,0]$.
	Let us investigate $\bar x:=0$.
	Note that this is the only feasible point of the associated optimization problem \eqref{eq:non_Lipschitz_objective}
	and, thus, its uniquely determined global minimizer.	
	Due to $f'(\bar x)=1$ and $G'(\bar x)=0$, $\bar x$ cannot be an M-stationary point
	of \eqref{eq:non_Lipschitz_objective}.
	On the other hand, setting
	\[
		x_k:=0,\quad x_k':=-\frac{1}{2k},\quad y_k:=\frac{1}{4k^2},\quad\eta_k:=0,\quad\lambda_k:=k
	\]
	for each $k\in\N$, we have $x_k\to\bar x$, $x_k'\to\bar x$, $y_k\to 0$, $\eta_k\to 0$, as well as
	$\eta_k=f'(x_k)+G'(x_k')^*\lambda_k$ and $\lambda_k\in N_K(G(x_k')-y_k)$ for each $k\in\N$, i.e.,
	$\bar x$ is approximately stationary for \eqref{eq:non_Lipschitz_objective}.
	Observe that $\{\lambda_k\}_{k\in\N}$ is unbounded.
\end{example}

Let us underline that the above example demonstrates that local minimizers of \eqref{eq:non_Lipschitz_objective}
do not need to be M-stationary in general while approximate stationarity serves as a necessary optimality
condition under some assumptions on the data which are inherent in finite dimensions,
see \cref{thm:asymptotic_stationarity_necessary_optimality_condition} and \cref{rem:exemplary_settings}.
Nevertheless, M-stationarity turned out to be a celebrated stationarity condition in finite-dimensional
optimization. On the one hand, it is restrictive enough to exclude non-reasonable feasible points of
\eqref{eq:non_Lipschitz_objective} when used as a necessary optimality condition. On the other hand,
it is weak enough to hold at the local minimizers of \eqref{eq:non_Lipschitz_objective} under very
mild qualification conditions.
Exemplary, we would like to refer the reader to \cite{FlegelKanzowOutrata2007} where this is visualized
by so-called disjunctive programs where $K$ is the union of finitely many polyhedral sets.
Another interest in M-stationarity arises from the fact that this system can often be solved directly
in order to identify reasonable feasible points of \eqref{eq:non_Lipschitz_objective}, see e.g.\
\cite{GuoLinYe2015,HarderMehlitzWachsmuth2021}.
In infinite-dimensional optimization, particularly, in optimal control, M-stationarity has turned out
to be of limited practical use since the limiting normal cone to nonconvex sets in function spaces
is uncomfortably large due to convexification effects arising when taking weak limits, see e.g.\
\cite{HarderWachsmuth2018,MehlitzWachsmuth2018}.

Due to this interest in M-stationarity, at least from the finite-dimensional point of view,
we aim to find conditions guaranteeing that a given approximately stationary point of \eqref{eq:non_Lipschitz_objective} is
already M-stationary.

\begin{definition}\label{def:asymptotic_regularity}
	We say that the \emph{uniform qualification condition} holds at $\bar x\in\mathcal S\cap\dom q$ whenever
	\begin{align*}
		\limsup\limits_{\substack{x\to\bar x,\,x'\to\bar x,\,x''\to\bar x,\\ y\to 0,\,q(x)\to q(\bar x)}}
		&\left(\partial q(x)+G'(x')^*N_K(G(x')-y)+N_C(x'')\right)
		\\
		&
		\subset
		\bsd q(\bar x)+G'(\bar x)^*\overline N_K(G(\bar x))+\overline N_C(\bar x).
	\end{align*}
\end{definition}

By construction, the uniform qualification condition guarantees that a given approximately stationary
point of \eqref{eq:non_Lipschitz_objective} is already M-stationary as desired.

\begin{proposition}\label{prop:from_approximate_to_M_stationarity}
Let $\bar x\in\mathcal S\cap\dom q$
satisfy the uniform qualification condition.
If $\bar x$ is
an approximately stationary
point of \eqref{eq:non_Lipschitz_objective},
then it is M-stationary.
\end{proposition}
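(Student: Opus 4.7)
The plan is to combine the sequential reformulation of approximate stationarity from \cref{rem:asymptotic_stationarity_via_sequences} with the uniform qualification condition from \cref{def:asymptotic_regularity}, essentially passing to the limit in the Fr\'{e}chet inclusion to obtain its limiting (M-stationary) counterpart.

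First, I would apply \cref{rem:asymptotic_stationarity_via_sequences} to extract sequences $\{x_k\}_{k\in\N},\{x_k'\}_{k\in\N},\{x_k''\}_{k\in\N}\subset X$, $\{y_k\}_{k\in\N}\subset Y$, and $\{\eta_k\}_{k\in\N}\subset X^*$ with $x_k,x_k',x_k''\to\bar x$, $y_k\to 0$, $\eta_k\to 0$, $q(x_k)\to q(\bar x)$, and
\[
	\eta_k\in f'(x_k)+\sd q(x_k)+G'(x_k')^*N_K(G(x_k')-y_k)+N_C(x_k'')
\]
for all $k\in\N$. Rearranging, we have
\[
	\eta_k-f'(x_k)\in\sd q(x_k)+G'(x_k')^*N_K(G(x_k')-y_k)+N_C(x_k''),
\]
so the sequence on the left-hand side picks out, for each $k$, a point in the set whose outer limit is controlled by \cref{def:asymptotic_regularity}.

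Next, I would observe that the left-hand side converges (even in norm, hence weakly$^*$) to $-f'(\bar x)$: continuous Fr\'{e}chet differentiability of $f$ yields $f'(x_k)\to f'(\bar x)$ in $X^*$, and by assumption $\eta_k\to 0$. Since the parameters $(x_k,x_k',x_k'',y_k)$ also satisfy the convergences prescribed under the $\limsup$ in \cref{def:asymptotic_regularity} (including $q(x_k)\to q(\bar x)$), the uniform qualification condition immediately gives
\[
	-f'(\bar x)\in\bsd q(\bar x)+G'(\bar x)^*\overline{N}_K(G(\bar x))+\overline{N}_C(\bar x),
\]
which, after transposing $f'(\bar x)$, is exactly the M-stationarity condition from \cref{def:M_stationarity}.

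There is really no significant obstacle here; the proposition is essentially a direct consequence of the definitions once they are unpacked. The only subtle point to flag is that the outer limit in \cref{def:asymptotic_regularity} must be understood in the weak$^*$ sense on $X^*$ (consistent with how $\overline{N}$ and $\bsd$ are defined via $\wStarlimsup$ earlier in the paper), so that strong, and a fortiori weak$^*$, convergence of $\eta_k-f'(x_k)$ to $-f'(\bar x)$ is enough to conclude membership of the limiting object. No separate verification of boundedness or extraction of multipliers is needed, since the uniform qualification condition is formulated at the level of the aggregate set and thereby absorbs any potential unboundedness of $\lambda_k\in N_K(G(x_k')-y_k)$, $\mu_k\in N_C(x_k'')$, $\xi_k\in\sd q(x_k)$, which is precisely the pathology observed in \cref{ex:approximate_vs_M_stationarity}.
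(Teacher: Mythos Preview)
Your proposal is correct and follows essentially the same route as the paper: extract sequences from the definition of approximate stationarity, rearrange to isolate $\eta_k-f'(x_k)$, use continuous differentiability of $f$ to get norm convergence to $-f'(\bar x)$, and invoke the uniform qualification condition to land in the M-stationarity set. One minor remark: in the paper's notation the $\limsup$ in \cref{def:asymptotic_regularity} is the \emph{strong} outer limit (the weak$^*$ version is written $\wStarlimsup$), so your side comment about a weak$^*$ interpretation is unnecessary---but since you establish norm convergence of $\eta_k-f'(x_k)$ anyway, the argument goes through unchanged.
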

\begin{proof}
	By definition of approximate stationarity, for each $k\in\N$, we find $x_k,x'_k,x''_k\in B_{1/k}(\bar x)$,
	$y_k\in \tfrac1k\mathbb B$, and $\eta_k\in\tfrac1k\mathbb B^*$ such that $|q(x_k)-q(\bar x)|<\tfrac1k$ and
	$\eta_k-f'(x_k)\in \partial q(x_k)+G'(x'_k)^*N_K(G(x'_k)-y_k)+N_C(x''_k)$.
	Since $f$ is assumed to be continuously differentiable, we find $\eta_k-f'(x_k)\to -f'(\bar x)$.
	Thus, by validity of the uniform qualification condition, it holds
	\begin{align*}
		-f'(\bar x)
		&\in
		\limsup\limits_{k\to+\infty}\left(\partial q(x_k)+G'(x'_k)^*N_K(G(x'_k)-y_k)+N_C(x''_k)\right)\\
		&
		\subset
		\bsd q(\bar x)+G'(\bar x)^*\overline N_K(G(\bar x))+\overline{N}_C(\bar x),
	\end{align*}
	i.e., $\bar x$ is an M-stationary point of \eqref{eq:non_Lipschitz_objective}.
\end{proof}

Combining this with \cref{thm:asymptotic_stationarity_necessary_optimality_condition} yields the following result.

\begin{corollary}\label{cor:asymptotic_regularity_CQ}
Let $\bar x\in\mathcal S\cap\dom q$ be a local minimizer of \eqref{eq:non_Lipschitz_objective} which
satisfies the assumptions of \cref{thm:asymptotic_stationarity_necessary_optimality_condition}
as well as the uniform qualification condition. Then $\bar x$ is M-stationary.
\end{corollary}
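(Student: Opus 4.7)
The plan is to obtain the conclusion as a direct combination of the two preceding results, \cref{thm:asymptotic_stationarity_necessary_optimality_condition} and \cref{prop:from_approximate_to_M_stationarity}, so that no new analytical work is required; essentially, the corollary packages the two-step passage ``local minimizer $\Longrightarrow$ approximately stationary $\Longrightarrow$ M-stationary''.

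First, I would invoke \cref{thm:asymptotic_stationarity_necessary_optimality_condition} applied to $\bar x$. Since $\bar x$ is by hypothesis a local minimizer of \eqref{eq:non_Lipschitz_objective} and the structural assumptions of that theorem (namely lower semicontinuity of $f+q$ near $\bar x$ relative to the feasibility mapping $\Phi$ from \cref{rem:embedding_Q}, together with either the Asplund property of $X$ and $Y$ or the convexity condition~\ref{item:asplund_or_convex_geometric_constraints}) are granted, the theorem supplies, for every $\varepsilon>0$, points $x,x',x''\in B_\varepsilon(\bar x)$ and $y\in\varepsilon\mathbb B$ with $|q(x)-q(\bar x)|<\varepsilon$ satisfying inclusion \eqref{eq:approximate_stationarity}. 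By the very formulation of \cref{def:asymptotic_stationarity}, this means precisely that $\bar x$ is an approximately stationary point of \eqref{eq:non_Lipschitz_objective}.

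Second, I would feed this conclusion into \cref{prop:from_approximate_to_M_stationarity}. Since the uniform qualification condition is assumed at $\bar x$ and we have just established approximate stationarity, the proposition directly yields
\[
	0\in f'(\bar x)+\bsd q(\bar x)+G'(\bar x)^*\overline N_K(G(\bar x))+\overline N_C(\bar x),
\]
i.e., $\bar x$ is M-stationary in the sense of \cref{def:M_stationarity}, which is the desired conclusion.

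There is no genuine obstacle here: all the analytical heavy lifting (the Ekeland-plus-fuzzy-calculus argument producing the approximate multipliers, and the weak$^*$ outer limit passage encoded in the uniform qualification condition) has already been carried out in \cref{thm:main_result}, its specialization \cref{thm:asymptotic_stationarity_necessary_optimality_condition}, and \cref{prop:from_approximate_to_M_stationarity}. The only point worth double-checking while writing is that the sequential reading of approximate stationarity used inside the proof of \cref{prop:from_approximate_to_M_stationarity} (see \cref{rem:asymptotic_stationarity_via_sequences}) is compatible with the $\varepsilon$-based formulation delivered by \cref{thm:asymptotic_stationarity_necessary_optimality_condition}; this follows by taking a vanishing sequence $\varepsilon_k\downarrow 0$ and selecting the associated $x_k,x_k',x_k'',y_k,\eta_k$, which is routine.
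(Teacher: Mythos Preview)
Your proposal is correct and follows exactly the paper's own approach: the corollary is obtained by combining \cref{thm:asymptotic_stationarity_necessary_optimality_condition} (local minimizer $\Rightarrow$ approximately stationary) with \cref{prop:from_approximate_to_M_stationarity} (approximately stationary plus uniform qualification condition $\Rightarrow$ M-stationary), with no additional argument needed.
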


Observe that we do not need any so-called \emph{sequential normal compactness condition}, see
\cite[Section~1.1.4]{Mordukhovich2006}, for the above statement to hold which pretty much contrasts
the results obtained in \cite[Section~5]{Mordukhovich2006}. Indeed, sequential normal compactness
is likely to fail in the function space context related to optimal control, see \cite{Mehlitz2019a}.

Let us point the reader's attention to the fact that the uniform qualification condition is not a constraint
qualification in the narrower sense for \eqref{eq:non_Lipschitz_objective} since it also depends
on (parts of) the objective function.
Nevertheless, \cref{cor:asymptotic_regularity_CQ} shows that it may serve as a qualification
condition for M-stationarity of local minimizers under mild assumptions on the data.
In the absence of $q$, the uniform qualification condition is related
to other prominent so-called \emph{sequential} or \emph{asymptotic} constraint qualifications from the literature which address
several different kinds of optimization problems, see e.g.
\cite{AndreaniFazzioSchuverdtSecchin2019,AndreaniHaeserSecchinSilva2019,AndreaniMartinezRamosSilva2016,
BoergensKanzowMehlitzWachsmuth2019,Mehlitz2020b,Mehlitz2021,Ramos2019}.
In \cref{sec:control}, we demonstrate by means of a prominent setting from optimal control that
the uniform qualification condition may hold in certain situations where $q$ is present,
see \cref{lem:asymptotic_regularity_OC}.

\begin{remark}\label{rem:uniform_CQ}
	Note that in the particular setting $q\equiv 0$, the uniform qualification condition from \cref{def:asymptotic_regularity}
	at some point $\bar x\in\mathcal S$ simplifies to
	\begin{equation}\label{eq:uniform_CQ}
		\limsup\limits_{x'\to\bar x,\,x''\to \bar x,\,y\to 0}
		 \bigl(G'(x')^*N_K(G(x')-y)+N_C(x'')\bigr)
		\subset G'(\bar x)^*\overline N_K(G(\bar x))+\overline{N}_C(\bar x).
	\end{equation}
	In the light of \cref{prop:from_approximate_to_M_stationarity} and \cref{cor:asymptotic_regularity_CQ},
	\eqref{eq:uniform_CQ} serves as a constraint qualification guaranteeing M-stationarity of $\bar x$
	under mild assumptions
	as soon as this point is a local minimizer of the associated problem \eqref{eq:non_Lipschitz_objective}.
	One may, thus, refer to \eqref{eq:uniform_CQ} as the \emph{uniform constraint qualification}.
\end{remark}

Observations related to the ones from \cref{rem:uniform_CQ} have been made in \cite{BoergensKanzowMehlitzWachsmuth2019},
\cite[Section~2.2]{JiaKanzowMehlitzWachsmuth2021}, and \cite[Section~5.1]{Mehlitz2020b}
and underline that \eqref{eq:uniform_CQ} is a comparatively weak constraint qualification whenever $q\equiv 0$.
Exemplary, let us mention that whenever $X$ and $Y$ are finite-dimensional the \emph{generalized Mangasarian--Fromovitz
constraint qualification}
\begin{equation}\label{eq:GMFCQ}
	-G'(\bar x)^*\lambda\in\overline N_C(\bar x),\,\lambda\in\overline N_K(G(\bar x))
	\quad\Longrightarrow\quad
	\lambda=0
\end{equation}
is sufficient for \eqref{eq:uniform_CQ} to hold, but the uniform constraint qualification
is often much weaker than \eqref{eq:GMFCQ} which corresponds to metric regularity of $\Phi$
from \cref{rem:embedding_Q} at $(\bar x,(0,0))$, see \cite[Section~3.2]{Mehlitz2020b} for related discussions.
Let us also mention that \eqref{eq:GMFCQ} is sufficient for metric subregularity of $\Phi$ at $(\bar x,(0,0))$ exploited in
\cref{cor:lower_semicontinuity_relative_to_svm_decomposition}.

The following proposition provides a sufficient condition for validity of the uniform qualification condition in
case where $X$ is finite-dimensional.

\begin{proposition}\label{lem:sufficient_condition_sequential_regularity}
Let $X$ be finite-dimensional and $\bar x\in \mathcal S\cap\dom q$.
Suppose that the uniform constraint qualification \eqref{eq:uniform_CQ} is valid at $\bar x$, and
\begin{equation}
\label{eq:BCQ}
	\bigl(G'(\bar x)^*\overline N_K(G(\bar x))+\overline N_C(\bar x)\bigr)
		\cap(-\bsd^\infty q(\bar x))=\{0\}.
\end{equation}
	Then the uniform qualification condition holds at $\bar x$.
\end{proposition}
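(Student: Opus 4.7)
The plan is to take an arbitrary element $v$ in the outer limit on the left-hand side of the uniform qualification condition and, via the standard ``boundedness-or-blow-up'' dichotomy, to show $v\in\bsd q(\bar x)+G'(\bar x)^*\overline N_K(G(\bar x))+\overline N_C(\bar x)$. By finite-dimensionality of $X$ we may work with strongly converging sequences throughout. So pick sequences $x_k,x_k',x_k''\to\bar x$, $y_k\to 0$, $q(x_k)\to q(\bar x)$, and $\xi_k\in\partial q(x_k)$, $w_k\in G'(x_k')^*N_K(G(x_k')-y_k)+N_C(x_k'')$ with $\xi_k+w_k\to v$.

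First I would consider the benign case where $\{w_k\}$ is bounded. Passing to a subsequence, $w_k\to w$, and the uniform constraint qualification \eqref{eq:uniform_CQ} immediately gives $w\in G'(\bar x)^*\overline N_K(G(\bar x))+\overline N_C(\bar x)$. Then $\xi_k\to v-w$ while $\xi_k\in\sd q(x_k)$ with $x_k\to\bar x$ and $q(x_k)\to q(\bar x)$, so by the sequential characterization of the limiting subdifferential in the Asplund (here, finite-dimensional) setting recalled in the preliminaries, $v-w\in\bsd q(\bar x)$. Adding the two inclusions gives $v$ in the desired sum.

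The substantive case is when $\{w_k\}$ is unbounded, which I would rule out by contradiction. Passing to a subsequence, $\|w_k\|\to+\infty$; set $t_k:=1/\|w_k\|\downarrow 0$. Since $N_K$ and $N_C$ are cones, $\tilde w_k:=t_kw_k$ still lies in $G'(x_k')^*N_K(G(x_k')-y_k)+N_C(x_k'')$ (after rescaling the underlying normal vectors). The sequence $\{\tilde w_k\}$ has unit norm in finite-dimensional $X^*$, hence converges along a subsequence to some $\tilde w$ with $\|\tilde w\|=1$; by \eqref{eq:uniform_CQ} we get
\begin{equation*}
  \tilde w\in G'(\bar x)^*\overline N_K(G(\bar x))+\overline N_C(\bar x).
\end{equation*}
Multiplying $\xi_k+w_k=v+o(1)$ by $t_k$ yields $t_k\xi_k\to-\tilde w$. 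Since $t_k\downarrow 0$, $x_k\to\bar x$, $q(x_k)\to q(\bar x)$, and $\xi_k\in\sd q(x_k)$, the Asplund-space characterization of the singular subdifferential (given just before \cref{lem:Ekeland}) gives $-\tilde w\in\bsd^\infty q(\bar x)$. Now \eqref{eq:BCQ} forces $\tilde w=0$, contradicting $\|\tilde w\|=1$.

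The main obstacle, conceptually, is the second case: one must rescale and exploit precisely the horizon information encoded by $\bsd^\infty q$, which is why the qualification condition involves $\bsd^\infty q(\bar x)$ rather than just $\bsd q(\bar x)$. Finite-dimensionality is used twice — to extract a norm-convergent subsequence of the unit vectors $\tilde w_k$, and to identify weak-$*$ limits with strong limits so that the sequential formulae for $\bsd q$ and $\bsd^\infty q$ apply directly. The rest is bookkeeping, combining the two inclusions in the bounded case to conclude the claimed outer-limit inclusion.
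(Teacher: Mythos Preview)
Your argument is correct and essentially mirrors the paper's proof: both decompose into a bounded/unbounded dichotomy, use the uniform constraint qualification \eqref{eq:uniform_CQ} to handle the constraint part, and invoke the sequential characterization of $\bsd^\infty q$ after rescaling to reach a contradiction via \eqref{eq:BCQ}. The only cosmetic difference is that you normalize by $\|w_k\|$ while the paper normalizes by $\|u_k^*\|+\|v_k^*\|$, which changes nothing of substance.
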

\begin{proof}
	Let us fix
	\[
		x^*\in\limsup\limits_{\substack{x\to\bar x,\,x'\to\bar x,\,x''\to\bar x,\\ y\to 0,\ q(x)\to q(\bar x)}}
		\left(\partial q(x)+G'(x')^*N_K(G(x')-y)+N_C(x'')\right).
	\]
	Then we find sequences $\{x_k\}_{k\in\N},\{x_k'\}_{k\in\N},\{x_k''\}_{k\in\N}\subset X$,
	$\{y_k\}_{k\in\N}\subset Y$,
	and $\{x_k^*\}_{k\in\N}\subset X^*$ such that $x_k\to\bar x$, $x_k'\to\bar x$, $x_k''\to\bar x$,
	$y_k\to\bar y$, $q(x_k)\to q(\bar x)$, and $x_k^*\to x^*$ as well as
	$x_k^*\in\partial q(x_k)+G'(x'_k)^*N_K(G(x'_k)-y_k)+N_C(x''_k)$ for all $k\in\N$.
	Thus, there are sequences $\{u_k^*\}_{k\in\N},\{v^*_k\}_{k\in\N}\subset X^*$
	satisfying
	$x_k^*=u_k^*+v_k^*$, $u_k^*\in\partial q(x_k)$, and
	$v_k^*\in G'(x_k')^*N_K(G(x_k')-y_k)+ N_C(x_k'')$ for all $k\in\N$.
	
	Let us assume that $\{u_k^*\}_{k\in\N}$ is unbounded. Then, due to $x_k^*\to x^*$,
	$\{v_k^*\}_{k\in\N}$ is unbounded, too.
	For each $k\in\N$, we define
	$\tilde u_k^*:=u_k^*/(\norm{u_k^*}+\norm{v_k^*})$ and
	$\tilde v_k^*:=v_k^*/(\norm{u_k^*}+\norm{v_k^*})$, i.e.,
	the sequence $\{(\tilde u_k^*,\tilde v_k^*)\}_{k\in\N}$ belongs to the unit sphere of
	$X^*\times X^*$. Without loss of generality, we may assume $\tilde u_k^*\to\tilde u^*$ and
	$\tilde v_k^*\to\tilde v^*$ for some $\tilde u^*,\tilde v^*\in X^*$ since $X$
	is finite-dimensional. We note that $\tilde u^*$ and $\tilde v^*$ cannot vanish at the same time.
	Taking the limit in $x_k^*/(\norm{u_k^*}+\norm{v_k^*})=\tilde u_k^*+\tilde v_k^*$,
	we obtain $0=\tilde u^*+\tilde v^*$.
	By definition of the singular limiting subdifferential, we have $\tilde u^*\in\bsd^\infty q(\bar x)$
	while
	\[
		\tilde v^*
		\in
		\limsup
		\limits_{k\to+\infty}\bigl(G'(x_k')^*N_K(G(x_k')-y_k)+N_C(x_k'')\bigr)
		\subset
		G'(\bar x)^*\overline N_K(G(\bar x))+\overline N_C(\bar x)
	\]
	follows by the uniform constraint qualification \eqref{eq:uniform_CQ}.
	Thus, we find $\tilde u^*=\tilde v^*=0$ from condition \eqref{eq:BCQ}.
	The latter, however, contradicts $(\tilde u^*,\tilde v^*)\neq(0,0)$.
	
	From above, we now know that $\{u_k^*\}_{k\in\N}$ and $\{v_k^*\}_{k\in\N}$ are bounded. Without
	loss of generality, we may assume $u_k^*\to u^*$ and $v_k^*\to v^*$ for some $u^*,v^*\in X^*$.
	By definition of the limiting subdifferential we have $u^*\in\bsd q(\bar x)$, and
	$v^*\in G'(\bar x)^*\overline N_K(G(\bar x))+\overline N_C(\bar x)$
	is guaranteed by the uniform constraint qualification \eqref{eq:uniform_CQ}.
	Thus, we end up with
	$x^*\in \bsd q(\bar x)+G'(\bar x)^*\overline N_K(G(\bar x))+\overline N_C(\bar x)$
	which completes the proof.
\end{proof}

\Cref{lem:sufficient_condition_sequential_regularity} shows that in case where $X$ is finite-dimensional,
validity of the uniform qualification condition can be guaranteed in the presence of two conditions.
The first one, represented by condition \eqref{eq:uniform_CQ},
is a sequential constraint
qualification which guarantees regularity of the constraints at the reference point.
The second one, given by condition \eqref{eq:BCQ}, ensures in some sense that the challenging
part of the objective function and the constraints of \eqref{eq:non_Lipschitz_objective} are somewhat
compatible at the reference point. A similar \emph{decomposition} of qualification conditions has been
used in \cite{ChenGuoLuYe2017,GuoYe2018} in order to ensure M-stationarity of standard nonlinear problems in
finite dimensions with a composite objective function. In the latter papers, the authors referred
to a condition of type \eqref{eq:BCQ} as \emph{basic qualification}, and this terminology can be
traced back to the works of Mordukhovich, see e.g.\ \cite{Mordukhovich2006}.

Note that in order to transfer \cref{lem:sufficient_condition_sequential_regularity} to the
infinite-dimensional setting, one would be in need to postulate sequential compactness
properties on $q$ or the constraint data which are likely to fail in several interesting function
spaces, see \cite{Mehlitz2019a} again.

\subsection{Augmented Lagrangian methods for optimization problems with non-Lipschitzian objective functions}\label{sec:alm}

We consider the optimization problem \eqref{eq:non_Lipschitz_objective}
such that $X$ is an Asplund space, $Y$ is a Hilbert space with $Y\cong Y^*$,
and $K$ is convex.
Let us note that the assumption on $Y$ can be relaxed by assuming the existence of a Hilbert space
$H$ with $H\cong H^*$ such that $(Y,H,Y^*)$ is a Gelfand triplet, see
\cite[Section~7]{BoergensKanzowMehlitzWachsmuth2019} or \cite{BoergensKanzowSteck2019,KanzowSteckWachsmuth2018} for a discussion.
Furthermore, we will exploit the following assumption which is standing throughout this section.
\begin{assumption}\label{ass:ALM}
	At least one of the following assumptions is valid.
	\begin{enumerate}
		\item The space $X$ is finite-dimensional.
		\item The function $q$ is uniformly continuous.
		\item The functions $f$, $q$, and $x\mapsto \dist_K^2(G(x))$ are weakly sequentially lower semicontinuous
			and $C$ is weakly sequentially closed. Furthermore, $X$ is reflexive.
	\end{enumerate}
\end{assumption}

Throughout this
subsection, we assume that $C$ is a comparatively simple set, e.g., a box if $X$ is equipped with
a (partial) order relation, while the constraints $G(x)\in K$ are difficult and will be treated
with the aid of a multiplier-penalty approach.
In this regard, for some penalty parameter
$\theta>0$, we investigate the (partial) augmented Lagrangian function
$\LL_\theta\colon X\times Y\to\R_\infty$ given by
\[
	\forall (x,\lambda)\in X\times Y\colon\quad
	\LL_\theta(x,\lambda)
	:=
	f(x)
	+\frac{\theta}{2}\dist_K^2\left(G(x)+\frac{\lambda}{\theta}\right)
	+q(x).
\]
We would like to point the reader's attention to the fact that the second
summand in the definition of $\LL_\theta$ is
continuously differentiable since the squared distance to a convex set possesses
this property.
For the control of the penalty parameter, we make use of the function $V_\theta\colon X\times Y\to\R$
given by
\[
	\forall (x,y)\in X\times Y\colon\quad
	V_\theta(x,\lambda):=\norm{G(x)-P_K(G(x)+\lambda/\theta)}.
\]
The method of interest is now given as stated in \cref{alg:ALM}.
\begin{algorithm}
	\begin{enumerate}[label=(S.\arabic*)]
		\setcounter{enumi}{-1}
		\item Choose $(x_0,\lambda_0)\in (\dom q)\times Y$, $\theta_0>0$, $\gamma>1$,
			$\tau\in(0,1)$, and a nonempty, bounded set $B\subset Y$ arbitrarily.
			Set $k:=0$.
		\item \label{item:termination_ALM}
			If $(x_k,\lambda_k)$ satisfies a suitable termination criterion,
			then stop.
		\item \label{item:subproblem_ALM}
			Choose $u_k\in B$ and find an approximate solution $x_{k+1}\in C\cap\dom q$ of
			\begin{equation}\label{eq:ALM_subproblem}
				\min\{\LL_{\theta_{k}}(x,u_k)\,|\,x\in C\}.
			\end{equation}			
		\item \label{item:update_of_multipliers}
			Set
			\[
				\lambda_{k+1}:=\theta_k\left[G(x_{k+1})+u_k/\theta_k
					-P_K\left(G(x_{k+1})+u_k/\theta_k\right)\right].
			\]
		\item \label{item:control_of_parameter_ALM}
			If $k=0$ or $V_{\theta_k}(x_{k+1},u_k)\leq\tau\,V_{\theta_{k-1}}(x_k,u_{k-1})$,
			then set $\theta_{k+1}:=\theta_k$. Otherwise, set $\theta_{k+1}:=\gamma\,\theta_k$.
		\item Go to \ref{item:termination_ALM}.
	\end{enumerate}
	\caption{
		Safeguarded augmented Lagrangian method for \eqref{eq:non_Lipschitz_objective}.
		\label{alg:ALM}
		}
\end{algorithm}

We would like to point the reader's attention to the fact that \cref{alg:ALM} is a so-called
\emph{safeguarded} augmented Lagrangian method since the multiplier estimates $u_k$ are chosen
from the bounded set $B$. In practice, one typically chooses $B$ as a (very large) box, and
defines $u_k$ as the projection of $\lambda_k$ onto $B$ in \ref{item:subproblem_ALM}.
Note that without safeguarding, one
obtains the classical augmented Lagrangian method. However, it is well known that the safeguarded
version possesses superior global convergence properties, see \cite{KanzowSteck2017}.
An overview of augmented Lagrangian methods in constrained optimization can be found in
\cite{BirginMartinez2014}.

Let us comment on potential termination criteria for \cref{alg:ALM}.
On the one hand, \cref{alg:ALM} is designed for the computation of M-stationary points of
\eqref{eq:non_Lipschitz_objective} which, at the latest, will become clear
in \cref{cor:ALM_global_convergence}. Thus, one may check approximate validity of these
stationarity conditions in~\ref{item:termination_ALM}. However, if $q$ or $C$ is variationally
challenging, this might be a nontrivial task. On the other hand, at its core, \cref{alg:ALM} is
a penalty method, so it is also reasonable to check approximate feasibility with respect to the
constraints $G(x)\in K$ in~\ref{item:termination_ALM}.

In \cite{ChenGuoLuYe2017}, the authors suggest to solve \eqref{eq:non_Lipschitz_objective},
where all involved spaces are instances of $\R^n$ while the constraints $G(x)\in K$ are replaced
by smooth inequality and equality constraints, with the classical augmented Lagrangian
method. In case where $q$ is not present and $X$ as well as $Y$ are Euclidean spaces,
\cref{alg:ALM} recovers the partial augmented Lagrangian
scheme studied in \cite{JiaKanzowMehlitzWachsmuth2021} where the authors focus on situations where
$C$ is nonconvex and of challenging variational structure. We note that, technically, \cref{alg:ALM}
is also capable of handling this situation. However, it might be difficult to solve the appearing
subproblems \eqref{eq:ALM_subproblem} if both $q$ and $C$ are variationally complex.
Note that we did not specify in~\ref{item:subproblem_ALM} how precisely the subproblems have to
be solved. Exemplary, one could aim to find stationary or globally $\varepsilon$-minimal points of the function
$\LL_{\theta_k}(\cdot,u_k)_C$ here. We comment on both situations below.

Our theory from \cref{sec:main_result} can be used to show that \cref{alg:ALM} computes
approximately stationary points of \eqref{eq:non_Lipschitz_objective} when the subproblems \eqref{eq:ALM_subproblem}
are solved up to stationarity of $\LL_{\theta_k}(\cdot,u_k)_C$.
\begin{theorem}\label{thm:ALM_produces_asymptotically_stationary_points}
	Let $\{x_k\}_{k\in\N}$ be a sequence generated by \cref{alg:ALM} such that
	$x_{k+1}$ is a stationary point of $\LL_{\theta_k}(\cdot,u_k)_C$ for each $k\in\N$.
	Assume that, along a subsequence (without relabeling), we have
	$x_k\to\bar x$ and $q(x_k)\to q(\bar x)$ for some $\bar x\in X$ which is feasible to \eqref{eq:non_Lipschitz_objective}.
	Then $\bar x$ is an approximately stationary point of \eqref{eq:non_Lipschitz_objective}.
\end{theorem}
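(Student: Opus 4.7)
The plan is to show that each iterate $x_{k+1}$, being a stationary point of $\LL_{\theta_k}(\cdot,u_k)_C$, produces via \cref{cor:consequence_for_set_constrained_problems} and \cref{rem:asymptotic_stationarity_local_minimizers} auxiliary points $y_k,z_k\in B_{\varepsilon_k}(x_{k+1})$ satisfying a Fr\'{e}chet-subdifferential inclusion whose limit, as $k\to+\infty$, is exactly the sequential characterization of approximate stationarity from \cref{rem:asymptotic_stationarity_via_sequences}. The multiplier feeding $N_K$ in that limit is built from the ALM update for $\lambda_{k+1}$. The elementary identity underpinning the whole argument is that, writing $w_k(y):=G(y)+u_k/\theta_k$, the convex projection theorem yields $\Lambda_k(y):=\theta_k[w_k(y)-P_K(w_k(y))]\in N_K(P_K(w_k(y)))$, and because $\tfrac12\dist_K^2$ is Fr\'{e}chet differentiable on $Y$ with gradient $\id-P_K$, the usual chain rule combined with the elementary smooth-plus-lsc sum rule delivers
\[
	\sd\LL_{\theta_k}(\cdot,u_k)(y)
	=
	f'(y)+G'(y)^*\Lambda_k(y)+\sd q(y).
\]

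Before applying \cref{cor:consequence_for_set_constrained_problems}, I verify $V_{\theta_k}(x_{k+1},u_k)\to 0$ along the convergent subsequence. If $\{\theta_k\}$ stays bounded, the safeguarding rule eventually enforces $V_{\theta_k}(x_{k+1},u_k)\le\tau\,V_{\theta_{k-1}}(x_k,u_{k-1})$, giving geometric decrease. If $\theta_k\to+\infty$, then $\norm{u_k/\theta_k}\to 0$ by boundedness of $B$ and $\dist_K(G(x_{k+1}))\to 0$ by continuity of $G$ and feasibility of $\bar x$, so the triangle bound $V_{\theta_k}(x_{k+1},u_k)\le 2\norm{u_k/\theta_k}+\dist_K(G(x_{k+1}))$ forces $V\to 0$ again. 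Because $P_K$ is $1$-Lipschitz and $G$ is continuous, the same reasoning shows $y_k^*:=G(y_k)-P_K(w_k(y_k))\to 0$ whenever $\norm{y_k-x_{k+1}}\to 0$.

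Next, I apply \cref{cor:consequence_for_set_constrained_problems}---valid for stationary points by \cref{rem:asymptotic_stationarity_local_minimizers}---to $\varphi:=\LL_{\theta_k}(\cdot,u_k)$ and $\Omega:=C$ with tolerance $\varepsilon_k:=\min\{1/k,\,1/(k(1+\theta_k))\}$, the lsc-relative-to-$C$ prerequisite being secured by \cref{ass:ALM}: case (a) via \cref{cor:lower_semicontinuity_near_point_set_via_weak_sequential_lower_semicontinuity} in finite dimensions, case (b) via \cref{cor:sufficient_criteria_lower_semicontinuity_relative_to_a_set}\,\ref{item:lsc_via_uniform_continuity} applied to $q$ plus the smooth remainder, and case (c) again via \cref{cor:lower_semicontinuity_near_point_set_via_weak_sequential_lower_semicontinuity}. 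The output is $y_k,z_k\in B_{\varepsilon_k}(x_{k+1})$ with $z_k\in C$, $|\LL_{\theta_k}(y_k,u_k)-\LL_{\theta_k}(x_{k+1},u_k)|<\varepsilon_k$, and some $\eta_k$ with $\norm{\eta_k}<\varepsilon_k$ obeying
\[
	\eta_k
	\in
	f'(y_k)+G'(y_k)^*\tilde\lambda_k+\sd q(y_k)+N_C(z_k),
	\qquad
	\tilde\lambda_k:=\Lambda_k(y_k)\in N_K(G(y_k)-y_k^*).
\]

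It only remains to establish $q(y_k)\to q(\bar x)$. Isolating $q(y_k)-q(x_{k+1})$ in the displayed $\LL_{\theta_k}$-estimate, continuity of $f$ reduces the task to controlling the penalty difference, for which
\[
	\tfrac{\theta_k}{2}|\dist_K^2(w_k(y_k))-\dist_K^2(w_k(x_{k+1}))|
	\le
	\theta_k\,\norm{G(y_k)-G(x_{k+1})}\bigl(\dist_K(w_k(y_k))+\dist_K(w_k(x_{k+1}))\bigr).
\]
The two distances are uniformly bounded in $k$ (they equal $\norm{\lambda_{k+1}}/\theta_k$ and a small perturbation thereof, both of which stay bounded by the analysis above), $\norm{G(y_k)-G(x_{k+1})}=O(\varepsilon_k)$ by local Lipschitzness of $G$, and the choice of $\varepsilon_k$ forces $\theta_k\varepsilon_k\to 0$, so the penalty difference vanishes. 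Together with $q(x_{k+1})\to q(\bar x)$ this yields $q(y_k)\to q(\bar x)$, and \cref{rem:asymptotic_stationarity_via_sequences} closes the argument. The principal obstacle is exactly this balancing of $\varepsilon_k$ against $\theta_k$: a naive choice $\varepsilon_k=1/k$ would fail when $\theta_k\to+\infty$, and it is the freedom to pick $\varepsilon_k$ arbitrarily small in \cref{cor:consequence_for_set_constrained_problems} that rescues the argument.
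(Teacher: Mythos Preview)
Your proof is correct and follows essentially the same route as the paper: invoke the approximate-stationarity machinery for the stationary point $x_{k+1}$ of $\LL_{\theta_k}(\cdot,u_k)_C$, expand $\partial\LL_{\theta_k}$ via the smooth-plus-lsc sum rule together with the projection characterization of $N_K$, and establish that the perturbation $y_k^*\to 0$ by the bounded/unbounded $\theta_k$ case split. The one visible difference is that the paper asserts $|q(x_{k+1}')-q(x_{k+1})|<1/k$ directly (implicitly absorbing the $\theta_k$-dependence into the free choice of $\delta$ and $\eta$ in \cref{thm:main_result_wrt_set}), whereas you make this balancing explicit via $\varepsilon_k=\min\{1/k,\,1/(k(1+\theta_k))\}$ and then extract the $q$-convergence from the $\LL$-difference---this is the same idea, just spelled out more carefully.
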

\begin{proof}
	Observe that \cref{ass:ALM} guarantees that $\LL_{\theta_k}(\cdot,u_k)$ is lower
	semicontinuous relative to $C$ near each point from $C\cap\dom q$, see
	\cref{cor:sufficient_criteria_lower_semicontinuity_relative_to_a_set,cor:lower_semicontinuity_near_point_set_via_weak_sequential_lower_semicontinuity}.
	Since $x_{k+1}$ is a stationary point of $\LL_{\theta_k}(\cdot,u_k)_C$, we can apply
	\cref{rem:minimality_vs_stationarity} and \cref{thm:main_result_wrt_set} in order to find
	$x_{k+1}'\in B_{1/k}(x_{k+1})$ and $x_{k+1}''\in C\cap B_{1/k}(x_{k+1})$ such that $|q(x_{k+1}')-q(x_{k+1})|<\tfrac1k$ and
	\[
		0\in\partial \LL_{\theta_{k}}(x_{k+1}',u_k)+N_C(x_{k+1}'')+\tfrac1k\,\mathbb B^*
	\]
	for each $k\in\N$.
	From $x_k\to\bar x$ and $q(x_k)\to q(\bar x)$ we have $x_k'\to \bar x$, $x_k''\to\bar x$, and $q(x_k')\to q(\bar x)$.	
	Noting that $f$, $G$, and, by convexity of $K$, the squared distance function $\dist_K^2$
	are continuously differentiable, we find
	\begin{equation}\label{eq:non_Lipschitz_asymptotic_stationarity}
		\begin{aligned}
		0\in f'(x_{k+1}')
		&+
		\theta_k\,G'(x_{k+1}')^*
		\left[
			G(x_{k+1}')+u_k/\theta_k-P_K\left(G(x_{k+1}')+u_k/\theta_k\right)
		\right]
		\\
		&+
		\partial q(x_{k+1}')
		+
		N_C(x_{k+1}'')
		+
		\tfrac1k\,\mathbb B^*
		\end{aligned}
	\end{equation}
	for each $k\in\N$ where we used
	the subdifferential sum rule from \cite[Corollary~1.12.2]{Kru03}.
	Let us set $y_{k+1}:=G(x_{k+1}')-P_K(G(x_{k+1}')+u_k/\theta_k)$
	for each $k\in\N$. By definition of the projection and convexity of $K$, we find
	\begin{align*}
		\theta_k(y_k+u_k/\theta_k)
		\in
		N_K(P_K(G(x_{k+1}')+u_k/\theta_k))
		=
		N_K(G(x_{k+1}')-y_{k+1}),
	\end{align*}
	so we can rewrite \eqref{eq:non_Lipschitz_asymptotic_stationarity} by means of
	\begin{equation}\label{eq:non_Lipschitz_asymptotic_stationarity_refined}
		0\in f'(x_{k+1}')+\partial q(x_{k+1}')+ G'(x_{k+1}')^*N_K(G(x_{k+1}')-y_{k+1})
			+N_C(x_{k+1}'')+\tfrac1k\,\mathbb B^*
	\end{equation}
	for each $k\in\N$.
	
	It remains to show $y_{k+1}\to 0$.
	We distinguish two cases.
	
	First, assume that $\{\theta_k\}_{k\in\N}$ remains bounded.
	By construction of \cref{alg:ALM}, this yields $V_{\theta_k}(x_{k+1},u_k)\to 0$ as $k\to+\infty$.
	Recalling that the projection $P_K$ is Lipschitz continuous with modulus $1$ by convexity
	of $K$, we have
	\begin{align*}
		\norm{y_{k+1}}
		&
		\leq
		V_{\theta_k}(x_{k+1},u_k)
		+
		\norm{G(x_{k+1}')-G(x_{k+1})}\\
		&\qquad
		+
		\norm{P_K(G(x_{k+1}')+u_k/\theta_k)-P_K(G(x_{k+1})+u_k/\theta_k)}
		\\
		&
		\leq
		V_{\theta_k}(x_{k+1},u_k)
		+
		2\norm{G(x_{k+1}')-G(x_{k+1})}
	\end{align*}
	for each $k\in\N$. Due to $x_k\to \bar x$ and $x_k'\to\bar x$ as well as continuity of $G$,
	this yields $y_{k+1}\to 0$.
	
	Finally, suppose that $\{\theta_k\}_{k\in\N}$ is unbounded. Since this sequence is monotonically
	increasing, we have $\theta_k\to+\infty$.
	By boundedness of $\{u_k\}_{k\in\N}$, continuity of $G$ as well as the projection $P_K$,
	$x_k'\to\bar x$, and feasibility of $\bar x$ for \eqref{eq:non_Lipschitz_objective}, it holds
	\[
		y_{k+1}
		=
		G(x_{k+1}')-P_K(G(x_{k+1}')+u_k/\theta_k)
		\to
		G(\bar x)-P_K(G(\bar x))
		=
		0,
	\]
	and this completes the proof.
\end{proof}

Let us mention that the assumption $q(x_k)\to q(\bar x)$ is trivially satisfied as soon as
$q$ is continuous on its domain. For other types of discontinuity, however, this does not follow
by construction of the method and has to be presumed. Let us note that this convergence is also implicitly
used in the proof of the related result \cite[Theorem~3.1]{ChenGuoLuYe2017} but does not follow
from the postulated assumptions, i.e., this assumption is missing there.

Note that demanding feasibility of accumulation points is a natural assumption when considering
augmented Lagrangian methods. This property naturally holds whenever the sequence $\{\theta_k\}_{k\in\N}$
remains bounded or if $q$ is bounded from below while the sequence $\{\LL_{\theta_k}(x_{k+1},u_k)\}_{k\in\N}$ remains bounded.
The latter assumption is typically satisfied whenever globally $\eps_k$-minimal points of $\LL_{\theta_k}(\cdot,u_k)_C$
can be computed in order to approximately solve the
subproblems \eqref{eq:ALM_subproblem} in~\ref{item:subproblem_ALM}, where
$\{\varepsilon_k\}_{k\in\N}\subset[0,+\infty)$ is a bounded sequence. Indeed, we have
\begin{equation}\label{eq:consequence_of_eps_minimality}
	\forall x\in\mathcal S\colon\quad
	\LL_{\theta_k}(x_{k+1},u_k)
	\leq
	\LL_{\theta_k}(x,u_k)+\varepsilon_k
	\leq
	f(x)+\norm{u_k}^2/(2\theta_k)+q(x)+\varepsilon_k
\end{equation}
in this situation, and this yields the claim by boundedness of $\{u_k\}_{k\in\N}$ and monotonicity
of $\{\theta_k\}_{k\in\N}$. If $\{\varepsilon_k\}_{k\in\N}$ is a null sequence, we obtain an even
stronger result.

\begin{theorem}\label{thm:global_convergence_eps_minimality}
	Let $\{x_k\}_{k\in\N}\subset X$ be a sequence generated by \cref{alg:ALM} and let $\{\varepsilon_k\}_{k\in\N}\subset[0,+\infty)$
	be a null sequence such that $x_{k+1}$ is a globally $\varepsilon_k$-minimal point of $\LL_{\theta_k}(\cdot,u_k)_C$ for each $k\in\N$.
	Then each accumulation point $\bar x\in X$ of $\{x_k\}_{k\in\N}$ is a global minimizer of
	\eqref{eq:non_Lipschitz_objective} and, along the associated subsequence, we find $q(x_k)\to q(\bar x)$.
\end{theorem}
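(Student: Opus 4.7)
The plan is to exploit the $\varepsilon_k$-minimality of $x_{k+1}$ in the standard augmented-Lagrangian fashion. For any fixed reference point $x\in\mathcal S\cap\dom q$, feasibility gives $\dist_K(G(x))=0$ and hence
\[
	\LL_{\theta_k}(x_{k+1},u_k)
	\le
	\LL_{\theta_k}(x,u_k)+\varepsilon_k
	\le
	f(x)+\frac{1}{2\theta_k}\|u_k\|^2+q(x)+\varepsilon_k,
\]
exactly as in \eqref{eq:consequence_of_eps_minimality}. Boundedness of $\{u_k\}\subset B$, monotonicity of $\{\theta_k\}$, and $\varepsilon_k\to0$ make the right-hand side uniformly bounded by some constant $M$, independently of $k$.

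Next I would show that each accumulation point $\bar x$ of $\{x_k\}$ (along a subsequence $x_{k_j}\to\bar x$ with associated $u_{k_j-1}$, $\theta_{k_j-1}$, $\varepsilon_{k_j-1}$) is feasible and lies in $\dom q$. Continuity of $f$ yields $f(x_{k_j})\to f(\bar x)$, while the lower semicontinuity of $q$ provided by \cref{ass:ALM} (strong lsc in cases (a) and (b), weak sequential lsc in case (c), both of which are implied by the strong convergence of $x_{k_j}$) gives $\liminf_j q(x_{k_j})\ge q(\bar x)$. Combining this with the uniform bound $M$ forces $q(\bar x)<+\infty$, so $\bar x\in\dom q$; closedness of $C$ (respectively weak sequential closedness in case (c)) then yields $\bar x\in C$. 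To establish $G(\bar x)\in K$ I split according to the behaviour of $\{\theta_k\}$. If $\{\theta_k\}$ stays bounded, then the safeguarding rule built into \cref{alg:ALM} forces $V_{\theta_k}(x_{k+1},u_k)\to0$; passing to the limit along the subsequence, using continuity of $G$, the $1$-Lipschitz property of $P_K$, and boundedness of $\{u_k/\theta_k\}$, yields $G(\bar x)=P_K(G(\bar x)+u^\infty)$ for some $u^\infty\in Y$, hence $G(\bar x)\in K$. If instead $\theta_k\to+\infty$, rearranging the key inequality gives
\[
	\frac{\theta_{k_j-1}}{2}\,\dist_K^2\!\left(G(x_{k_j})+\frac{u_{k_j-1}}{\theta_{k_j-1}}\right)
	\le M-f(x_{k_j})-q(x_{k_j}),
\]
whose right-hand side is bounded above along the subsequence by continuity of $f$ and lsc of $q$. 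Dividing by $\theta_{k_j-1}/2\to+\infty$ therefore yields $\dist_K(G(x_{k_j})+u_{k_j-1}/\theta_{k_j-1})\to0$, and since $u_{k_j-1}/\theta_{k_j-1}\to0$, continuity of $G$ together with closedness of $K$ gives $G(\bar x)\in K$.

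Having established $\bar x\in\mathcal S\cap\dom q$, global optimality follows by dropping the nonnegative penalty term in the key inequality and taking $\liminf$ along the subsequence: for any $x\in\mathcal S\cap\dom q$,
\[
	f(\bar x)+q(\bar x)
	\le
	\liminf_{j\to+\infty}\bigl(f(x_{k_j})+q(x_{k_j})\bigr)
	\le
	f(x)+q(x),
\]
so $\bar x$ is a global minimizer of \eqref{eq:non_Lipschitz_objective}. Finally, specialising $x:=\bar x$ (now admissible) in the same inequality and taking $\limsup$ yields $\limsup_j q(x_{k_j})\le q(\bar x)$, which together with the lsc bound on the $\liminf$ establishes $q(x_{k_j})\to q(\bar x)$.

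The hard part will be the penalty-parameter dichotomy: the uniform bound $M$ must simultaneously be strong enough to extract $\bar x\in\dom q$ via lsc of $q$ and to drive the penalty term to zero in the unbounded-$\theta_k$ case, while in the bounded-$\theta_k$ case one has to rely solely on the algorithmic rule $V_{\theta_k}(x_{k+1},u_k)\to0$. Consistently invoking the correct version of lower semicontinuity prescribed by \cref{ass:ALM} across the three branches requires some care, but no new variational tools beyond those already developed in the paper are needed.
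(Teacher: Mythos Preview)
Your overall strategy is sound and largely parallels the paper's, but there is a genuine gap in the bounded-$\theta_k$ case when you pass to global optimality (and, consequently, to $q(x_{k_j})\to q(\bar x)$). After ``dropping the nonnegative penalty term'' from \eqref{eq:consequence_of_eps_minimality} you only obtain
\[
	f(x_{k+1})+q(x_{k+1})
	\le
	f(x)+q(x)+\frac{\|u_k\|^2}{2\theta_k}+\varepsilon_k
	\qquad(x\in\mathcal S),
\]
and the term $\|u_k\|^2/(2\theta_k)$ does \emph{not} vanish when $\{\theta_k\}$ stays bounded (only boundedness of $\{u_k\}$ is available, and $\theta_k$ is eventually constant). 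Hence the displayed chain $\liminf_j(f(x_{k_j})+q(x_{k_j}))\le f(x)+q(x)$ does not follow, and the same defect propagates to your $\limsup$ argument for $q(x_{k_j})\to q(\bar x)$. The paper resolves this by keeping the full penalty contribution $\tfrac{\theta_k}{2}\dist_K^2(G(x_{k+1})+u_k/\theta_k)-\|u_k\|^2/(2\theta_k)$ on the left of \eqref{eq:consequence_of_eps_minimality_rearranged} and showing, via the triangle inequality $\dist_K(G(x_{k+1})+u_k/\theta_k)\le V_{\theta_k}(x_{k+1},u_k)+\|u_k\|/\theta_k$ together with $V_{\theta_k}(x_{k+1},u_k)\to0$, that this combined term has nonpositive $\limsup$; only then does $\limsup_k(f(x_{k+1})+q(x_{k+1}))\le f(x)+q(x)$ follow.

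A smaller slip: in the bounded case your feasibility argument asserts $G(\bar x)=P_K(G(\bar x)+u^\infty)$ ``for some $u^\infty$'', which presupposes convergence of $u_{k_j-1}/\theta_{k_j-1}$ that you do not have. It is enough (and simpler) to note that $P_K(G(x_{k+1})+u_k/\theta_k)\in K$ and $\|G(x_{k+1})-P_K(G(x_{k+1})+u_k/\theta_k)\|=V_{\theta_k}(x_{k+1},u_k)\to0$, whence $\dist_K(G(\bar x))=0$ by continuity of $G$ and closedness of $K$. Also, you need not invoke \cref{ass:ALM} for the lower semicontinuity of $q$: it is a standing hypothesis on $q$ throughout Section~\ref{sec:geometric_constraints}.
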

\begin{proof}
	Without loss of generality, we assume $x_k\to\bar x$.
	By closedness of $C$, we have $\bar x\in C$.
	The estimate \eqref{eq:consequence_of_eps_minimality} yields
	\begin{equation}\label{eq:consequence_of_eps_minimality_rearranged}
		f(x_{k+1})+q(x_{k+1})+\frac{\theta_k}{2}\dist_K^2\left(G(x_{k+1})+\frac{u_k}{\theta_k}\right)-\frac{\norm{u_k}^2}{2\theta_k}
		\leq
		f(x)+q(x)+\varepsilon_k
	\end{equation}
	for each $x\in\mathcal S$.
	We show the statement of the theorem by distinguishing two cases.
	
	In case where $\{\theta_k\}_{k\in\N}$ remains bounded, we find $\dist_K(G(x_{k+1}))\leq V_{\theta_k}(x_{k+1},u_k)\to 0$
	from~\ref{item:control_of_parameter_ALM}, so the continuity of the distance function $\dist_K$ and $G$ yields $G(\bar x)\in K$, i.e.,
	$\bar x$ is feasible to \eqref{eq:non_Lipschitz_objective}. Using the triangle inequality, we also obtain
	\[
		\dist_K(G(x_{k+1})+u_k/\theta_k)
		\leq
		\dist_K(G(x_{k+1}))+\norm{u_k}/\theta_k
		\leq
		V_{\theta_k}(x_{k+1},u_k)+\norm{u_k}/\theta_k
	\]
	for each $k\in\N$. Squaring on both sides, exploiting the boundedness of $\{u_k\}_{k\in\N}$ and $V_{\theta_k}(x_{k+1},u_k)\to 0$ yields
	\[
		\limsup\limits_{k\to+\infty}\left(\dist_K^2\left(G(x_{k+1})+u_k/\theta_k\right)-(\norm{u_k}/\theta_k)^2\right)\leq 0.
	\]
	The boundedness of $\{\theta_k\}_{k\in\N}$ and \eqref{eq:consequence_of_eps_minimality_rearranged} thus show
	$\limsup_{k\to+\infty}(f(x_{k+1})+q(x_{k+1}))\leq f(x)+q(x)$ for each $x\in\mathcal S$.
	Exploiting the lower semicontinuity of $q$, this leads to $f(\bar x)+q(\bar x)\leq f(x)+q(x)$, i.e., $\bar x$ is a global minimizer
	of \eqref{eq:non_Lipschitz_objective}. On the other hand, we have
	\[
		f(\bar x)+q(\bar x)
		\leq
		\liminf\limits_{k\to+\infty}\left(f(x_{k+1})+q(x_{k+1})\right)
		\leq
		\limsup\limits_{k\to+\infty}\left(f(x_{k+1})+q(x_{k+1})\right)
		\leq
		f(\bar x)+q(\bar x)
	\]
	from the particular choice $x:=\bar x$, so the continuity of $f$ yields $q(x_k)\to q(\bar x)$ as claimed.
	
	Now, let us assume that $\{\theta_k\}_{k\in\N}$ is not bounded. Then we have $\theta_k\to+\infty$ from~\ref{item:control_of_parameter_ALM}.
	By choice of $x_{k+1}$, we have $\LL_{\theta_k}(x_{k+1},u_k)\leq \LL_{\theta_k}(x,u_k)+\varepsilon_k$ for all $x\in C$ and each $k\in\N$,
	so the definition of the augmented Lagrangian function yields
	\[
		f(x_{k+1})+q(x_{k+1})+\frac{\theta_k}{2}\dist_K^2\left(G(x_{k+1})+\frac{u_k}{\theta_k}\right)
		\leq
		f(x)+q(x)+\frac{\theta_k}{2}\dist^2_K\left(G(x)+\frac{u_k}{\theta_k}\right)+\varepsilon_k
	\]
	for each $x\in C$. By continuity of $f$ and lower semicontinuity of $q$, $\{f(x_{k+1})+q(x_{k+1})\}_{k\in\N}$ is bounded from below.
	Thus, dividing the above estimate by $\theta_k$ and taking the limit inferior, we find
	\begin{align*}
		\dist_K^2(G(\bar x))
		&=
		\liminf\limits_{k\to+\infty} \dist_K^2\left(G(x_{k+1})+u_k/\theta_k\right)\\
		&\leq
		\liminf\limits_{k\to+\infty} \dist_K^2\left(G(x)+u_k/\theta_k\right)
		=
		\dist_K^2(G(x))
	\end{align*}
 	for each $x\in C$ from $\theta_k\to+\infty$ and continuity of $\dist_K$ and $G$. Hence, $\bar x$ is a global minimizer of $\dist_K^2\circ G$
 	over $C$. Since $\mathcal S$ is assumed to be nonempty, we infer $\dist_K^2(G(\bar x))=0$, i.e., $\bar x$ is feasible to
 	\eqref{eq:non_Lipschitz_objective}.
 	Exploiting boundedness of $\{u_k\}_{k\in\N}$, nonnegativity of the distance function, and $\theta_k\to+\infty$, we now obtain
 	$\limsup_{k\to+\infty}(f(x_{k+1})+q(x_{k+1}))\leq f(x)+q(x)$ for each $x\in\mathcal S$ from
 	\eqref{eq:consequence_of_eps_minimality_rearranged}.
 	Proceeding as in the first case now yields the claim.
\end{proof}

It remains to clarify how the subproblems \eqref{eq:ALM_subproblem} can be solved in practice.
If the non-Lipschitzness of $q$ is, in some sense, structured while $C$ is of simple form, it should be
reasonable to solve \eqref{eq:ALM_subproblem} with the aid of a nonmonotone proximal gradient method,
see \cite[Section~3.1]{ChenGuoLuYe2017}.
On the other hand, in situations where $q$ is not present while $C$ possesses a variational
structure which allows for the efficient computation of projections, a nonmonotone spectral gradient method might be used to
solve \eqref{eq:ALM_subproblem}, see \cite[Section~3]{JiaKanzowMehlitzWachsmuth2021}.
Finally, it might be even possible to solve \eqref{eq:ALM_subproblem} up to global optimality
in analytic way in some practically relevant applications where $q$ is a standard sparsity-promoting
term and the remaining data is simple enough.

Coming back to the assertion of \cref{thm:ALM_produces_asymptotically_stationary_points}, the following is now clear from \cref{cor:asymptotic_regularity_CQ}.
\begin{corollary}\label{cor:ALM_global_convergence}
	Let $\{x_k\}_{k\in\N}$ be a sequence generated by \cref{alg:ALM} such that $x_{k+1}$ is a stationary point
	of $\LL_{\theta_k}(\cdot,u_k)_C$ for each $k\in\N$.
	Assume that, along a subsequence (without relabeling), we have
	$x_k\to\bar x$ and $q(x_k)\to q(\bar x)$ for some $\bar x\in X$ which is feasible to \eqref{eq:non_Lipschitz_objective}
	and satisfies the uniform qualification condition.
	Then $\bar x$ is M-stationary.
\end{corollary}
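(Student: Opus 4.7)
The plan is to observe that this corollary is a direct chaining of two results already established. First, I would verify that the hypotheses of Theorem~\ref{thm:ALM_produces_asymptotically_stationary_points} are exactly the ones we are given: the sequence $\{x_k\}_{k\in\N}$ is produced by \cref{alg:ALM}, each iterate $x_{k+1}$ is a stationary point of $\LL_{\theta_k}(\cdot,u_k)_C$, and along the given subsequence we have $x_k\to\bar x$, $q(x_k)\to q(\bar x)$ with $\bar x$ feasible for \eqref{eq:non_Lipschitz_objective}. Applying that theorem directly yields that $\bar x$ is an approximately stationary point of \eqref{eq:non_Lipschitz_objective} in the sense of \cref{def:asymptotic_stationarity}.

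Second, I would invoke \cref{prop:from_approximate_to_M_stationarity}, whose only hypotheses are approximate stationarity of $\bar x$ (just established) and validity of the uniform qualification condition at $\bar x$ (assumed). This immediately yields that $\bar x$ is M-stationary in the sense of \cref{def:M_stationarity}, completing the argument.

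There is essentially no main obstacle here: the entire technical content has been absorbed into \cref{thm:ALM_produces_asymptotically_stationary_points} (which handles the penalty-parameter bookkeeping, the projection identity translating the multiplier update into an element of $N_K$, and the limiting behavior of the constraint residual in both the bounded-$\theta_k$ and unbounded-$\theta_k$ cases) and into \cref{prop:from_approximate_to_M_stationarity} (which is the definitional passage from approximate to M-stationarity under the uniform qualification condition). The only point one should not overlook is that the cited reference in the paragraph preceding the corollary, \cref{cor:asymptotic_regularity_CQ}, assumes local minimality, so one must appeal instead to the weaker \cref{prop:from_approximate_to_M_stationarity} in order not to implicitly require $\bar x$ to be a local minimizer; approximate stationarity together with the uniform qualification condition is already enough.
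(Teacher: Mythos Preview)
Your proposal is correct and matches the paper's approach, which simply states that the corollary is ``clear from \cref{cor:asymptotic_regularity_CQ}'' in light of \cref{thm:ALM_produces_asymptotically_stationary_points}. Your observation that one should really cite \cref{prop:from_approximate_to_M_stationarity} rather than \cref{cor:asymptotic_regularity_CQ}---since local minimality of $\bar x$ is not assumed---is well taken and makes the argument cleaner than the paper's own one-line justification.
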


Note that in the light of \cref{lem:sufficient_condition_sequential_regularity},
\cref{cor:ALM_global_convergence} drastically generalizes and improves
\cite[Theorem~3.1]{ChenGuoLuYe2017} which shows global convergence of a related augmented
Lagrangian method to certain stationary points under validity of a basic qualification,
see condition \eqref{eq:BCQ}, and the
\emph{relaxed constant positive linear dependence constraint qualification} which is more restrictive
than condition \eqref{eq:uniform_CQ}
in the investigated setting, see \cite[Lemma~2.7]{JiaKanzowMehlitzWachsmuth2021} as well.
Let us mention that such a result has been foreshadowed in \cite[Section~5.4]{JiaKanzowMehlitzWachsmuth2021}.
We would like to point the reader's attention to the fact that working with strong
accumulation points in the context of \cref{thm:ALM_produces_asymptotically_stationary_points,thm:global_convergence_eps_minimality} and
\cref{cor:ALM_global_convergence} is indispensable as long as $q$ or the sets $K$ and $C$ are not
convex since the limiting variational tools rely on strong convergence in the primal space.
In the absence of $q$ and if $K$ and $C$ are convex, some convergence results based on weak
accumulation points are available, see e.g.\ \cite[Section~7]{BoergensKanzowMehlitzWachsmuth2019} and \cite{BoergensKanzowSteck2019,KanzowSteckWachsmuth2018}.
Clearly, in finite dimensions, both types of convergence are equivalent and the consideration
of strong accumulation points is not restrictive at all.

\subsection{Sparsity-promotion in optimal control}\label{sec:control}

In this section, we apply the theory derived earlier to an optimal control problem
with a sparsity-promoting term in the objective function.
As it is common to denote control functions by $u$ in the context of optimal control,
we will use the same notation here for the decision variable for notational convenience.

For some bounded domain $D\subset\R^d$ and some $p\in(0,1)$, we define a function
$q\colon L^2(D)\to\R$ by means of
\begin{equation}\label{eq:sparsity_promoting_functional}
	\forall u\in L^2(D)\colon
	\quad
	q(u):=\int_D |u(\omega)|^p\,\mathrm d\omega.
\end{equation}
Above, $L^2(D)$ denotes the standard Lebesgue space of (equivalence classes of)
measurable functions whose square is integrable and is equipped with the usual norm.
In optimal control, the function $q$ is used as an additive term in the objective function
in order to promote sparsity of underlying control
functions, see \cite{ItoKunisch2014,NatemeyerWachsmuth2020,Wachsmuth2019}.
A reason for this behavior is that the integrand $t\mapsto |t|^p$ possesses a unique global minimizer
and infinite growth at the origin.
In \cite{MehlitzWachsmuth2021}, the authors explore the variational properties of the
functional $q$. It has been shown to be uniformly continuous in \cite[Lemma~2.3]{MehlitzWachsmuth2021}.
Furthermore, in \cite[Theorem~4.6]{MehlitzWachsmuth2021}, the following formula has been proven
for each $\bar u\in L^2(D)$:
\begin{equation}\label{eq:subdifferentials_sparsity}
	\bsd q(\bar u)=\sd q(\bar u)
	=
	\bigl\{
		\eta\in L^2(D)\,|\,
		\eta= p\abs{\bar u}^{p-2}\bar u\text{ a.e.\ on }\{\bar u\neq 0\}
	\bigr\}.
\end{equation}
Let us emphasize that this means that the Fr\'{e}chet and limiting subdifferential actually coincide
and can be empty if the reference point is a function which tends to zero too fast somewhere on its
domain. This underlines the sparsity-promoting properties of $q$.

Now, for a continuously differentiable function $f\colon L^2(D)\to\R$ and functions
$u_a,u_b\in L^2(D)$ satisfying $u_a<0<u_b$ almost everywhere on $D$,
we consider the optimization problem
\begin{equation}\label{eq:optimal_control}\tag{OC}
	\min\limits_u\{f(u)+q(u)\,|\,u\in C\}
\end{equation}
where $C\subset L^2(D)$ is given by the box
\[
	C:=\{u\in L^2(D)\,|\,u_a\leq u\leq u_b\text{ a.e.\ on }D\}.
\]
For later use, let us mention that, for each $u\in C$, the (Fr\'{e}chet) normal cone to $C$ at $u$ is
given by the pointwise representation
\begin{equation}\label{eq:normal_cone_to_pointwise_box}
	N_C(u)
	=
	\left\{
		\eta\in L^2(D)\,\middle|\,
		\begin{aligned}
			&\eta\leq 0&&\text{a.e.\ on $\{u<u_b\}$}\\
			&\eta\geq 0&&\text{a.e.\ on $\{u_a<u\}$}
		\end{aligned}
	\right\}.
\end{equation}
Typically, in optimal control, $f$ is a function of type
\begin{equation}\label{eq:target_type_objective}
	\forall u\in L^2(D)\colon\quad
	f(u):=\tfrac12\norm{S(u)-y_\textup{d}}^2+\tfrac{\sigma}{2}\norm{u}^2
\end{equation}
where $S\colon L^2(D)\to H$ is the continuously differentiable
control-to-observation operator associated with a given
system of differential equations, $H$ is a Hilbert space, $y_\textup{d}\in H$ is the desired state, and $\sigma\geq 0$ is a
regularization parameter. Clearly, by means of the chain rule, $f$ is continuously
differentiable with derivative given by
\[
	\forall u\in L^2(D)\colon\quad
	f'(u)=S'(u)^*[S(u)-y_\textup{d}]+\sigma u.
\]
The presence of $q$ in the objective functional of
\eqref{eq:optimal_control} enforces sparsity of its solutions, i.e., the support of optimal controls
is likely to be small. It already has been mentioned in \cite{ItoKunisch2014,NatemeyerWachsmuth2020}
that one generally cannot show existence of solutions to optimization problems of type
\eqref{eq:optimal_control}. Nevertheless, the practical need for sparse controls makes it attractive
to consider the model and to derive necessary optimality conditions in order to identify reasonable
stationary points.

In the subsequent lemma, we show that the feasible points of \eqref{eq:optimal_control} satisfy
the uniform qualification condition stated in \cref{def:asymptotic_regularity}.
\begin{lemma}\label{lem:asymptotic_regularity_OC}
	Let $\bar u\in L^2(D)$ be a feasible point of \eqref{eq:optimal_control}.
	Then the uniform qualification condition holds at $\bar u$.
\end{lemma}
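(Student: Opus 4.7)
The proof strategy reduces the uniform qualification condition to a pointwise splitting argument. Since \eqref{eq:optimal_control} carries no $G(u)\in K$ constraint and $q$ is uniformly continuous on $L^2(D)$ (so $u\to\bar u$ automatically forces $q(u)\to q(\bar u)$), the condition of \cref{def:asymptotic_regularity} collapses to
\[
\limsup_{u\to\bar u,\,u''\to\bar u}\bigl(\sd q(u)+N_C(u'')\bigr)\subset \bsd q(\bar u)+\overline N_C(\bar u),
\]
and by \eqref{eq:subdifferentials_sparsity} and convexity of $C$ the RHS equals $\sd q(\bar u)+N_C(\bar u)$. Fix $\eta$ in the LHS and extract sequences $u_k\to\bar u$, $u_k''\to\bar u$ in $L^2(D)$ together with $\xi_k\in\sd q(u_k)$, $\mu_k\in N_C(u_k'')$ such that $\eta_k:=\xi_k+\mu_k\to\eta$ in $L^2(D)$. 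Passing to a subsequence (without relabelling), I may assume all four convergences hold pointwise a.e.\ on $D$.

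The plan is to produce an explicit splitting of $\eta$ by setting
\[
\xi(\omega):=\begin{cases} p\,|\bar u(\omega)|^{p-2}\bar u(\omega) & \text{if }\bar u(\omega)\ne 0,\\ \eta(\omega) & \text{if }\bar u(\omega)=0,\end{cases}\qquad \mu:=\eta-\xi,
\]
and then checking $\xi\in\sd q(\bar u)$ and $\mu\in N_C(\bar u)$. The sign conditions in \eqref{eq:normal_cone_to_pointwise_box} for $\mu$ follow from a pointwise case analysis against $u_a<0<u_b$. On $\{\bar u=0\}$ and on the interior strip $\{u_a<\bar u<u_b,\ \bar u\neq 0\}$, eventually $u_a<u_k''<u_b$ holds (and on the second set $u_k\to\bar u\ne 0$), so $\mu_k=0$ and $\xi_k\to p|\bar u|^{p-2}\bar u$; hence $\eta=\xi$ and $\mu=0$ there. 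On $\{\bar u=u_b\}$, eventually $u_k>0$ and $u_k''>u_a$, forcing $\xi_k=p u_k^{p-1}\ge 0$ and $\mu_k\ge 0$; passing to the pointwise limit yields $\mu\ge 0$ on this set. The set $\{\bar u=u_a\}$ is handled symmetrically, giving $\mu\le 0$ there.

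The main obstacle is verifying $\xi\in L^2(D)$: on $\{\bar u=u_b\}$ the expression $p\,u_b^{p-1}$ can blow up where $u_b$ approaches $0$, and no a priori $L^2$-bound on $\{\xi_k\}$ is available. Here I exploit the sign structure just established. Define $A_k:=\{\omega\in\{\bar u=u_b\}:\xi_k(\omega)\ge 0,\,\mu_k(\omega)\ge 0\}$; the pointwise analysis gives $\mathbf 1_{A_k}\to\mathbf 1_{\{\bar u=u_b\}}$ a.e., and on $A_k$ the bound $\xi_k^2\le(\xi_k+\mu_k)^2=\eta_k^2$ holds pointwise. Applying Fatou's lemma to $\mathbf 1_{A_k}\xi_k^2$ then produces
\[
\int_{\{\bar u=u_b\}} p^2\,u_b^{2p-2}\,\mathrm d\omega\le \liminf_{k\to\infty}\int_D \mathbf 1_{A_k}\xi_k^2\,\mathrm d\omega \le \liminf_{k\to\infty}\|\eta_k\|_{L^2(D)}^2<\infty,
\]
and the symmetric argument controls $\xi$ on $\{\bar u=u_a\}$. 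Since $\xi=\eta\in L^2(D)$ on $\{\bar u=0\}$ and on the interior strip, we obtain $\xi\in L^2(D)$ globally; hence $\xi\in\sd q(\bar u)$ by \eqref{eq:subdifferentials_sparsity}, and $\mu=\eta-\xi\in L^2(D)$ lies in $N_C(\bar u)$ by the sign analysis. Thus $\eta\in\sd q(\bar u)+N_C(\bar u)$, which is the uniform qualification condition at $\bar u$.
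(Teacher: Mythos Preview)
Your proof is correct and follows essentially the same route as the paper: reduce the uniform qualification condition to the inclusion $\limsup_{u\to\bar u,\,u''\to\bar u}(\sd q(u)+N_C(u''))\subset\sd q(\bar u)+N_C(\bar u)$, extract sequences, pass to a.e.\ pointwise convergent subsequences, and perform a case analysis on the partition $\{\bar u=u_a\}$, $\{u_a<\bar u<u_b,\,\bar u\neq 0\}$, $\{\bar u=0\}$, $\{\bar u=u_b\}$ to recover the correct sign structure for the splitting $\eta=\xi+\mu$.

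The one place you go beyond the paper is the explicit $L^2$-verification for $\xi$, which you handle via Fatou's lemma applied to $\mathbf 1_{A_k}\xi_k^2$. This is correct but heavier than necessary: once your pointwise sign analysis gives $\mu\ge 0$ on $\{\bar u=u_b\}$, you already have $0<p\,u_b^{p-1}=\xi\le\eta$ a.e.\ there, hence $\xi^2\le\eta^2$; the symmetric bound holds on $\{\bar u=u_a\}$, and on the remaining sets $\xi=\eta$. Thus $|\xi|\le|\eta|$ a.e.\ on all of $D$, which immediately forces $\xi\in L^2(D)$ without invoking Fatou. The paper leaves this step implicit; your version makes it explicit, which is a virtue, but the simpler domination argument would shorten it.
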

\begin{proof}
	Recalling that $q$ is continuous while $C$ is convex,
	the uniform qualification condition takes the simplified form
	\[
		\limsup\limits_{u\to\bar u,\,u'\to\bar u}
		\bigl(\sd q(u)+N_C(u')\bigr)
		\subset
		\bsd q(\bar u)+N_C(\bar u).
	\]
	Let us fix some point $\eta\in\limsup_{u\to\bar u,\,u'\to\bar u}\bigl(\sd q(u)+N_C(u')\bigr)$.
	Then we find sequences
	$\{u_k\}_{k\in\N},\{u_k'\}_{k\in\N},\{\eta_k\}_{k\in\N}\subset L^2(D)$
	such that $u_k\to\bar u$, $u_k'\to\bar u$, $\eta_k\to \eta$, as well as
	$\eta_k\in\sd q(u_k)+N_C(u_k')$ for all $k\in\N$.
	Particularly, there are sequences $\{\xi_k\}_{k\in\N},\{\mu_k\}_{k\in\N}\subset L^2(D)$
	such that $\xi_k\in\sd q(u_k)$, $\mu_k\in N_C(u_k')$, and $\eta_k=\xi_k+\mu_k$ for all $k\in\N$.
	From \eqref{eq:subdifferentials_sparsity} we find $\xi_k=p\abs{u_k}^{p-2}u_k$ almost
	everywhere on $\{u_k\neq 0\}$ for each $k\in\N$. Furthermore, we have $\mu_k\leq 0$
	almost everywhere on $\{u_k'=u_a\}$, $\mu_k\geq 0$ almost everywhere on $\{u_k'=u_b\}$,
	and $\mu_k=0$ almost everywhere on $\{u_a<u_k'<u_b\}$ for each $k\in\N$
	from \eqref{eq:normal_cone_to_pointwise_box}.
	Along a subsequence (without relabeling) we can ensure the convergences 
	$u_k(\omega)\to\bar u(\omega)$, $u_k'(\omega)\to\bar u(\omega)$, and $\eta_k(\omega)\to \eta(\omega)$
	for almost every $\omega\in D$.
	Thus, for almost every $\omega\in\{\bar u=u_a\}$, we can guarantee $u_k(\omega)<0$ and $u_k'(\omega)\in[u_a(\omega),0)$,
	i.e., $\eta_k(\omega)=\xi_k(\omega)+\mu_k(\omega)\leq p|u_k(\omega)|^{p-2}u_k(\omega)$
	for all large enough $k\in\N$, so, taking the
	limit yields $\eta(\omega)\leq p\abs{\bar u(\omega)}^{p-2}\bar u(\omega)$.
	Similarly, we find $\eta(\omega)\geq p\abs{\bar u(\omega)}^{p-2}\bar u(\omega)$ for almost every
	$\omega\in\{\bar u=u_b\}$. Finally, for almost every $\omega\in\{\bar u\neq 0\}\cap\{u_a<\bar u<u_b\}$,
	we have $u_k(\omega)\neq 0$ and $u_a(\omega)<u_k'(\omega)<u_b(\omega)$, i.e.,
	$\eta_k(\omega)=p\abs{u_k(\omega)}^{p-2}u_k(\omega)$ for large enough $k\in\N$, so taking
	the limit, we have $\eta(\omega)=p\abs{\bar u(\omega)}^{p-2}\bar u(\omega)$.
	Again, from \eqref{eq:subdifferentials_sparsity} and \eqref{eq:normal_cone_to_pointwise_box},
	we have $\eta\in\bsd q(\bar u)+N_C(\bar u)$,
	and this yields the claim.
\end{proof}

Recalling that $q$ is uniformly continuous, the subsequent result now directly follows
from \cref{cor:asymptotic_regularity_CQ}, the above lemma, and formulas
\eqref{eq:subdifferentials_sparsity} as well as \eqref{eq:normal_cone_to_pointwise_box}.
\begin{theorem}\label{thm:optimality_conditions_sparse_control}
	Let $\bar u\in L^2(D)$ be a local minimizer of \eqref{eq:optimal_control}.
	Then there exists a function $\eta\in L^2(D)$ such that
	\begin{subequations}\label{eq:sparse_control}
		\begin{align}
			\label{eq:sparse_control_der}
			&f'(\bar u)+\eta=0,\\
			\label{eq:sparse_control_subgradient_q}
			&\eta=p|\bar u|^{p-2}\bar u\quad\text{a.e.\ on }\{\bar u\neq 0\}\cap\{u_a<\bar u<u_b\},\\
			\label{eq:sparse_control_normal_cone_Uad_ua}
			&\eta\leq p\abs{u_a}^{p-2}u_a\quad\text{a.e.\ on }\{\bar u=u_a\},\\
			\label{eq:sparse_control_normal_cone_Uad_ub}
			&\eta\geq p\abs{u_b}^{p-2}u_b\quad\text{a.e.\ on }\{\bar u=u_b\}.
		\end{align}
	\end{subequations}
\end{theorem}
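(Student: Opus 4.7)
The plan is to realise \eqref{eq:optimal_control} as an instance of the geometrically constrained problem \eqref{eq:non_Lipschitz_objective} and then invoke \cref{cor:asymptotic_regularity_CQ}. Concretely, I would take $X := L^2(D)$, choose $Y := \{0\}$ together with the trivial data $G \equiv 0$ and $K := \{0\}$, and retain the box $C$ as the geometric constraint; the feasibility condition $G(u) \in K$ is then vacuous. Under this reduction, the M-stationarity condition of \cref{def:M_stationarity} collapses to
\begin{equation*}
	0 \in f'(\bar u) + \bsd q(\bar u) + \overline{N}_C(\bar u),
\end{equation*}
and since $C$ is convex, $\overline{N}_C(\bar u)$ coincides with $N_C(\bar u)$. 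The main goal is thus to establish this inclusion for any local minimizer of \eqref{eq:optimal_control} and then unfold it pointwise into \eqref{eq:sparse_control}.

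To apply \cref{cor:asymptotic_regularity_CQ}, I would verify its three ingredients in turn. First, $L^2(D)$ is reflexive, hence Asplund, which supplies condition~\ref{item:asplund_or_convex_geometric_constraints} of \cref{thm:asymptotic_stationarity_necessary_optimality_condition}. Second, $q$ is uniformly continuous on $L^2(D)$ by \cite[Lemma~2.3]{MehlitzWachsmuth2021} and $f$ is continuously Fr\'echet differentiable, so $f+q$ is uniformly continuous near $\bar u$; combined with the trivial metric subregularity of the reduced $\Phi$ at $(\bar u,(0,0))$, condition~\ref{item:setting_Asplund} of \cref{rem:exemplary_settings} (via \cref{cor:sufficient_criteria_lower_semicontinuity_relative_to_a_set}\,\ref{item:lsc_via_uniform_continuity}) yields the lower-semicontinuity-relative-to-$\Phi$ hypothesis. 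Third, the uniform qualification condition at $\bar u$ is precisely the content of \cref{lem:asymptotic_regularity_OC}, noting that under the trivial reduction the $G$-contribution in \cref{def:asymptotic_regularity} collapses to $\{0\}$ and continuity of $q$ renders the convergence $q(x)\to q(\bar u)$ automatic.

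\Cref{cor:asymptotic_regularity_CQ} then delivers the displayed M-stationarity inclusion, so there exist $\xi \in \bsd q(\bar u)$ and $\mu \in N_C(\bar u)$ with $f'(\bar u) + \xi + \mu = 0$. I would set $\eta := \xi + \mu = -f'(\bar u) \in L^2(D)$, which immediately gives \eqref{eq:sparse_control_der}. The pointwise formulas \eqref{eq:subdifferentials_sparsity} and \eqref{eq:normal_cone_to_pointwise_box} then let me read off $\eta$ on the a.e.\ partition $\{\bar u = u_a\} \cup \{u_a < \bar u < u_b\} \cup \{\bar u = u_b\}$ of $D$: on the middle piece intersected with $\{\bar u \ne 0\}$, $\mu = 0$ and $\xi = p\abs{\bar u}^{p-2}\bar u$ a.e., which gives \eqref{eq:sparse_control_subgradient_q}; on $\{\bar u = u_a\}$, where $\bar u = u_a < 0$, one has $\xi = p\abs{u_a}^{p-2}u_a$ and $\mu \le 0$, yielding \eqref{eq:sparse_control_normal_cone_Uad_ua}; the case $\{\bar u = u_b\}$ is symmetric and gives \eqref{eq:sparse_control_normal_cone_Uad_ub}.

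The principal obstacle I foresee is not the pointwise bookkeeping at the end, which is a direct substitution once the M-stationarity inclusion is secured, but in cleanly justifying the lower-semicontinuity-relative-to-$\Phi$ hypothesis under the trivial reduction; the uniform qualification condition has already been dispensed with by \cref{lem:asymptotic_regularity_OC}, and reflexivity of $L^2(D)$ handles the Asplund assumption for free.
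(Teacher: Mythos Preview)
Your proposal is correct and follows essentially the same route as the paper, which simply cites uniform continuity of $q$, \cref{cor:asymptotic_regularity_CQ}, \cref{lem:asymptotic_regularity_OC}, and formulas \eqref{eq:subdifferentials_sparsity} and \eqref{eq:normal_cone_to_pointwise_box} in one sentence. Your explicit reduction $Y=\{0\}$, $G\equiv0$, $K=\{0\}$ together with the verification of trivial metric subregularity and the pointwise unfolding of the M-stationarity inclusion merely spells out what the paper leaves implicit; the obstacle you flag concerning the lower-semicontinuity-relative-to-$\Phi$ hypothesis is not a real issue, since uniform continuity of $f+q$ already handles it via \cref{cor:sufficient_criteria_lower_semicontinuity_relative_to_a_set}\,\ref{item:lsc_via_uniform_continuity} exactly as you indicate.
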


We note that our approach to obtain necessary optimality conditions for \eqref{eq:optimal_control}
is much different from the one used in \cite{ItoKunisch2014,NatemeyerWachsmuth2020} where
\emph{Pontryagin's maximum principle} has been used to derive pointwise conditions characterizing
local minimizers under more restrictive assumptions than we needed to proceed.
On the one hand, this led to optimaility conditions which also provide information on
the subset of $D$ where the locally optimal control is zero, and one can easily see
that this is not the case in \cref{thm:optimality_conditions_sparse_control}.
On the other hand, a detailed inspection of \eqref{eq:sparse_control}
makes clear that our necessary optimality conditions provide helpful information regarding
the structure of the optimal control as the multiplier $\eta$ possesses $L^2$-regularity
while \eqref{eq:sparse_control_subgradient_q} causes $\eta$ to possess singularities
as the optimal control tends to zero somewhere on the domain.
Thus, this condition clearly promotes sparse controls which either are zero, tend to zero (if at all)
slowly enough, or are bounded away from it.
Note that this differs from the conditions derived in \cite{ItoKunisch2014,NatemeyerWachsmuth2020}
which are multiplier-free.

\section{Concluding remarks}\label{sec:conclusions}

In this paper, we established a theory on approximate stationarity conditions for optimization
problems with potentially non-Lipschitzian objective functions in a very general setting.
In contrast to the finite-dimensional situation, where approximate stationarity has been shown to
serve as a necessary optimality condition for local optimality without any additional assumptions,
some additional semicontinuity properties need to be present in the infinite-dimensional context.
We exploited our findings in order to re-address the classical topic of set extremality and were
in position to derive a novel version of the popular extremal principle. This may serve as a
starting point for further research which compares the classical as well as the new version of
the extremal principle in a more detailed way.
Moreover, we used our results in order to derive an approximate notion of stationarity as well
as an associated qualification condition related to M-stationarity for optimization problems with a
composite objective function and geometric constraints
in the Banach space setting. This theory then has been applied to study the convergence properties
of an associated augmented Lagrangian method for the numerical solution of such problems.
Furthermore, we demonstrated how these findings can be used to derive necessary optimality conditions
for optimal control problems with control constraints and a sparsity-promoting term in the
objective function. Some future research may clarify whether our approximate stationarity conditions
can be used to find necessary optimality conditions for optimization problems in function spaces where
nonconvexity or nonsmoothness pop up in a different context.
Exemplary, it would be interesting to study situations where the solution operator $S$
appearing in \eqref{eq:target_type_objective} is nonsmooth, see e.g.\
\cite{ChristofMeyerWalterClason2018,HintermuellerMordukhovichSurowiec2014,RaulsWachsmuth2020},
where the set of feasible controls is nonconvex, see e.g.\
\cite{ClasonRundKunisch2017,ClasonDengMehlitzPruefert2020,MehlitzWachsmuth2018},
or where the function $q$ is a term promoting sharp edges in continuous image denoising or deconvolution,
see e.g.\ \cite[Section~6]{BrediesLorenz2018}.

\section*{Acknowledgments}
The authors are grateful to Hoa Bui who suggested \cref{ex:extremality_more_restrictive}.

This work is supported by the Australian Research Council, project DP160100854, and
the DFG Grant \emph{Bilevel Optimal Control: Theory, Algorithms, and Applications}
(Grant No.\ WA 3636/4-2) within the Priority Program SPP 1962 (Non-smooth
and Complementarity-based Distributed Parameter Systems: Simulation and Hierarchical Optimization).
The first author
benefited from the support of the European Union's Horizon 2020
research and innovation programme under the Marie Sk{\l}odowska--Curie
Grant Agreement No.\ 823731 CONMECH, and Conicyt REDES program 180032.


\end{document}